\documentclass[%a4paper, 
11pt,  reqno]{amsart}
\pdfoutput=1

\usepackage[margin=1.2in,marginparwidth=1.5cm, marginparsep=0.5cm]{geometry}

%\usepackage[left=32mm, right=32mm, 
%bottom=27mm]{geometry}

\setlength{\pdfpagewidth}{8.50in}
\setlength{\pdfpageheight}{11.00in}

\usepackage{booktabs} % nice tables
\usepackage{microtype}
\usepackage{amssymb}
\usepackage{mathrsfs}

\usepackage{color}
\usepackage[implicit=true]{hyperref}

\usepackage{xsavebox}

\usepackage{cases}%%%

\allowdisplaybreaks[2]

\sloppy

\hfuzz  = 0.5cm %allows mathformula to wiggle a bit

%\includeonly{biblio1}

%Couleurs
\definecolor{gr}{rgb}   {0.,   0.69,   0.23 }
\definecolor{bl}{rgb}   {0.,   0.5,   1. }
\definecolor{mg}{rgb}   {0.85,  0.,    0.85}
%\definecolor{gy}{rgb}   {0.8,  0.8,   0.8}
\definecolor{yl}{rgb}   {0.8,  0.7,   0.}
\definecolor{or}{rgb}  {0.7,0.2,0.2}

\newtheorem{theorem}{Theorem} [section]

\newtheorem{lemma}[theorem]{Lemma}
\newtheorem{proposition}[theorem]{Proposition}
\newtheorem{remark}[theorem]{Remark}

\newtheorem{definition}[theorem]{Definition}

%Lower/Upper bound appears below /above the integral sign
\DeclareMathOperator*{\intt}{\int}

\DeclareMathOperator*{\supp}{supp}
\DeclareMathOperator{\med}{med}

\DeclareMathOperator{\Id}{Id}

%
%Roman I

%Roman II

%%Roman III

%
%%Roman IV

\newcommand{\noi}{\noindent}
\newcommand{\Z}{\mathbb{Z}}
\newcommand{\R}{\mathbb{R}}

\newcommand{\T}{\mathbb{T}}

\let\Re=\undefined\DeclareMathOperator*{\Re}{Re}
\let\Im=\undefined\DeclareMathOperator*{\Im}{Im}

\let\P= \undefined
\newcommand{\P}{\mathbf{P}}

\newcommand{\Q}{\mathbf{Q}}

\newcommand{\Sb}{\pmb{\mathcal{S}}}

\newcommand{\E}{\mathbb{E}}

\newcommand{\F}{\mathcal{F}}

\newcommand{\Nf}{\mathfrak{N}}

\newcommand{\al}{\alpha}
\newcommand{\be}{\beta}
\newcommand{\dl}{\delta}

\newcommand{\Dl}{\Delta}
\newcommand{\eps}{\varepsilon}

\newcommand{\ld}{\lambda}

\newcommand{\s}{\sigma}
\newcommand{\Si}{\Sigma}
\newcommand{\ft}{\widehat}

\newcommand{\wt}{\widetilde}
\newcommand{\cj}{\overline}

\newcommand{\dt}{\partial_t}

\newcommand{\ta}{\theta}

\renewcommand{\l}{\ell}
\renewcommand{\o}{\omega}
\renewcommand{\O}{\Omega}

\newcommand{\les}{\lesssim}
\newcommand{\ges}{\gtrsim}

%Japanese Bracket
\newcommand{\jb}[1]
{\langle #1 \rangle}

\newcommand{\ind}{\mathbf 1}

\newcommand{\too}{\longrightarrow}

\newcommand{\N}{\mathbb{N}}
\newcommand{\NN}{\mathcal{N}}

\newtheorem*{ackno}{Acknowledgements}

%XXXX
%\renewcommand{\<}{<}

\newcommand{\RR}{\mathcal{R}}

\numberwithin{equation}{section}
\numberwithin{theorem}{section}

\newcommand{\Hb}{\mathbf{H}}

\newcommand{\Hf}{\mathfrak{H}}

\newcommand{\TT}{\mathcal{T}}

%\usepackage{tikzexternal}
%%\usetikzlibrary{external}
%\tikzexternalize % activate!
%

%\usetikzlibrary{external}
%\tikzexternalize[prefix=GKOtikz/]

\DeclareMathOperator{\Law}{Law}

\makeatletter
\@namedef{subjclassname@2020}{%
  \textup{2020} Mathematics Subject Classification}
\makeatother

\begin{document}
\baselineskip = 14pt

\title[Revisiting Bourgain's argument]
{Revisiting Bourgain's probabilistic construction of solutions to 
the 2-$d$ cubic NLS}

\author[T.~Oh and Y.~Wang]
{Tadahiro Oh and Yuzhao Wang}

%\address{
%Tadahiro Oh, School of Mathematics\\
% School of Mathematics and Statistics, Beijing Institute of Technology,
%Beijing 100081, China,
%and
%The University of Edinburgh\\
%and The Maxwell Institute for the Mathematical Sciences\\
%James Clerk Maxwell Building\\
%The King's Buildings\\
%Peter Guthrie Tait Road\\
%Edinburgh\\ 
%EH9 3FD\\
% United Kingdom
%}

\address{
Tadahiro Oh, School of Mathematics\\
The University of Edinburgh\\
and The Maxwell Institute for the Mathematical Sciences\\
James Clerk Maxwell Building\\
The King's Buildings\\
Peter Guthrie Tait Road\\
Edinburgh\\ 
EH9 3FD\\
 United Kingdom, 
 and 
 School of Mathematics and Statistics, Beijing Institute of Technology,
Beijing 100081, China
}

\email{hiro.oh@ed.ac.uk}

\address{
Yuzhao Wang\\
School of Mathematics\\
Watson Building\\
University of Birmingham\\
Edgbaston\\
Birmingham\\
B15 2TT\\ United Kingdom}

\email{y.wang.14@bham.ac.uk}

\subjclass[2020]{35Q55, 60H15}

\keywords{nonlinear Schr\"odinger equation;
 Gibbs measure; probabilistic well-posedness; random initial data;
 random tensor estimate}

\dedicatory{Dedicated to Professor Yoshio Tsutsumi   on the
occasion of his seventieth birthday}

\begin{abstract}
In a seminal paper (1996), Bourgain proved invariance of the Gibbs measure
for the defocusing cubic nonlinear Schr\"odinger equation on the two-dimensional torus
by 
constructing
local-in-time solutions in a probabilistic manner.
In this note, we revisit and streamline his argument, using the random
tensor estimate developed
by Deng, Nahmod, and Yue (2022).

\end{abstract}

%\date{\today}
%%
%
\maketitle

\tableofcontents

\newpage

\section{Introduction}
\label{SEC:1}

\subsection{Overview}

In seminal works \cite{BO94, BO96}, Bourgain
initiated the probabilistic study of 
dispersive partial differential equations (PDEs)
in the context of invariant Gibbs measures.
In particular, in \cite{BO96}, 
he established invariance of the Gibbs measures
associated with  the following defocusing cubic 
nonlinear Schr\"odinger equation (NLS) on the two-dimensional torus
 $\T^2 = (\R / (2\pi \Z))^2$:
\begin{align}
i \dt u +  \Delta u =   |u|^{2}u, 
%\begin{cases}
%i \dt u +  \Delta u =   |u|^{2}u \\
%u|_{t = 0} = u_0,
%\end{cases}
\qquad (x, t) \in \T^2 \times \R.
\label{NLS1}
\end{align}

\noi
The key novelty of this breakthrough work  %\cite{BO96}
is the  probabilistic construction
of local-in-time solutions
to~\eqref{NLS1} with Gibbsian random initial data of low regularity.
Bourgain's work has had a huge impact in the field, 
leading to the study of probabilistic well-posedness
of dispersive PDEs, allowing us to go beyond
the limit of deterministic analysis;
see, for example, 
\cite{BT1, BT2, CO, BT3, LM, BOP2, Poc}.
See also  \cite{BOP4, Tzv1}
for (slightly outdated) surveys on the subject.
In particular, 
over the recent years, 
we have witnessed a significant rapid progress
in the study 
lying 
at the intersection of nonlinear dispersive PDEs
and  probability theory.
See 
\cite{OTh2, GKO, GKO2, Bring0, DNY2, DNY3, OOT1, Bring1,DNY4,  OOT2, BDNY}
for 
 recent well-posedness studies
on singular stochastic 
 dispersive PDEs, broadly interpreted, with 
rough random initial data and\,/\,or 
stochastic forcing, 
where various tools and ideas have been introduced.
Our main goal in this note
is to revisit
and streamline Bourgain's probabilistic construction 
of solutions in \cite{BO96} %to~\eqref{NLS1}
by using 
tools and ideas 
from this recent development, 
in particular 
the random
tensor estimate (Lemma~\ref{LEM:RT}) developed
by Deng, Nahmod, and Yue~\cite{DNY3}.

\subsection{Gibbs measure}
Gibbs measures in statistical physics
describe  thermal equilibria
and have been studied extensively.
By drawing an analogy to finite-dimensional Hamiltonian systems, 
a  Gibbs measure, formally given by 
\begin{equation} 
d \rho = Z^{-1}e^{- H(u)}  d u, 
\label{Gibbs1}
\end{equation}

\noi
%for NLS and gKdV
is expected to be  invariant  for a Hamiltonian PDE,  
where  $H(u)$ denotes a Hamiltonian of the equation.
\noi
The study of invariant Gibbs measures
for Hamiltonian PDEs, 
initiated by 
Lebowitz, Rose, and Speer \cite{LRS}
and Bourgain
\cite{BO94, BO96}, 
consists of two disjoint problems:

\smallskip

\begin{itemize}
\item[(i)]
Construction of a Gibbs measure 
as a probability measure on functions\,/\,distributions.

\vspace{1mm}

\item[(ii)]
Construction of dynamics on the support of a Gibbs measure.

\end{itemize}

\smallskip

In this subsection, we briefly discuss  
some aspects of
  the Gibbs measure construction
for 
 the defocusing cubic NLS \eqref{NLS1} on $\T^2$
 with the Hamiltonian:
\begin{align*}
H(u) = \frac{1}{2} \int_{\T^2} |\nabla u |^2 dx + \frac{1}{4}\int_{\T^2} |u|^4 dx. 
\end{align*}

\noi
In this case, the Gibbs measure $\rho$
corresponds to (the complex-valued version of)
the so-called $\Phi^4_2$-measure, 
whose construction goes back to 
 Nelson \cite{Nelson}
(see also \cite{DT, OTh1}), 
and is 
given by\footnote{Namely, 
instead of $\rho$ in \eqref{Gibbs1}, 
we consider a Gibbs measure of the form:  
\begin{equation*} 
d \rho = Z^{-1} e^{- H(u) - \frac 12 M(u)}  d u, 
\end{equation*}

\noi
where  $M(u) = \int_{\T^2}|u|^2 dx$ is the conserved mass.}
\begin{align}
d\rho = Z^{-1} e^{- \frac 14 \int_{\T^2} |u|^4 dx} d\mu
\label{Gibbs2}
\end{align} 
 
\noi
(modulo a renormalization on the potential energy).
Here, $\mu$ is the massive Gaussian free field on $\T^2$, 
formally given by 
\begin{align}\label{gauss0}
d \mu 
= Z^{-1} e^{-\frac 12 \| u\|_{H^{1} }^2    } du.
\end{align}

\noi
More precisely, 
$\mu$ is given as the induced probability measure under the map:\footnote{We endow $\T^2$  
 with the normalized Lebesgue measure $dx_{\T^2}=(2\pi)^{-2}dx$. With a slight abuse of notation, we still use $dx$ to denote the normalized Lebesgue measure on $\T^2$.}
\begin{align}
\o\in \O \longmapsto u(\o) = \sum_{n \in \Z^2 } \frac{ g_n(\o)}{\jb{n}} e_n, 
\label{map}
\end{align}

%\noi
%where 
%
%and $\ft u(n)$  denotes the Fourier coefficient of $u$. 

\noi
where 
$\jb{\,\cdot\,} = (1+|\cdot|^2)^\frac{1}{2}$, 
$e_n(x) = e^{i n\cdot x}$, 
and $\{ g_n \}_{n \in \Z^2}$
is a sequence of mutually independent standard complex-valued
Gaussian random variables on a probability space 
$(\O,\F,\mathbb P)$.
It is easy to see that $u$ in~\eqref{map} 
belongs almost surely to $H^s(\T^2) \setminus L^2(\T^2)$
for any $s < 0$, 
and thus 
 a renormalization 
 is needed
 on the interaction potential in \eqref{Gibbs2}.

 Given $N \in \N$,  let $\P_N$ be  the frequency projector
 onto the frequencies $\{|n|\le N\}$ given by 
\begin{align*}
\P_N f = 
\sum_{ |n| \leq N}  \ft f (n)  e_n.
\end{align*}

\noi
For each fixed $x \in \T^2$, 
$ \P_N u(x)$ is
a mean-zero complex-valued Gaussian random variable with variance:
\begin{align}
\s_N = \E\big[|\P_N u(x)|^2\big] = \sum _{\substack{n \in \Z^2\\|n| \le N}} \frac1{\jb{n}^2}
\sim  \log N \too \infty, 
\label{sigma1}
\end{align}

\noi
as $N\to\infty$. 
In the current complex-valued setting, 
we define the Wick renormalized  power
$:\! |\P_N u|^4\!:$
by setting
\begin{align}
\begin{split}
:\! |\P_N u(x)|^4 \!: 
%\, \stackrel{\text{def}}{=} 
& = 2 \s_N^2 L_2(\s_N^{-1}|\P_N u(x)|^2)\\
& = |\P_N u(x)|^4 - 4 \s_N |\P_N u(x)|^2 + 2\s_N^2
\end{split}
\label{Wick1}
\end{align}

\noi
for $x \in \T^2$, 
where $L_2(z) = \frac 12 z^2 - 2z + 1$ is the Laguerre polynomial of degree $2$.
We point out that, by writing 
\begin{align}
|\P_N u(x) |^4 = (\Re \P_N u(x))^4 
+ 2 (\Re \P_N u(x))^2  (\Im \P_N u(x))^2
 + (\Im \P_N u(x))^4, 
\label{Wick1a} 
\end{align}

\noi
the renormalization in \eqref{Wick1}
is equivalent to separately applying the usual real-valued Wick renormalization via the Hermite polynomials to 
the powers of 
$\Re \P_N u(x)$ and $\Im\P_N u(x)$ in~\eqref{Wick1a};
see \cite[Lemma~2.1]{OTh1}.
See also \cite[p.\,2966]{BO}
and \cite[p.\,107]{OS}.
With this renormalization, we define the truncated Gibbs measure $\rho_N$ by 
\begin{align}
d\rho_N = Z_N^{-1} \exp\bigg(- \frac 14 \int_{\T^2} :\! |\P_N u|^4\!:  dx\bigg) d\mu.
\label{Gibbs4}
\end{align}

\noi
Then, by 
the Wiener chaos estimate (Lemma \ref{LEM:hyp})
and Nelson's estimate, we can show that, as $N \to \infty$, 
the truncated Gibbs measure $\rho_N$ converges in total variation\footnote{In fact, 
the  density of the truncated Gibbs measure $\rho_N$ converges in $L^p(\mu)$ for any finite $p \ge 1$.} to 
the (unique) limiting Gibbs measure $\rho$, 
which we write as 
\begin{align}
d\rho = Z^{-1} \exp\bigg(- \frac 14 \int_{\T^2} :\! | u|^4\!:  dx\bigg) d\mu.
\label{Gibbs5}
\end{align}

\noi
See \cite[Section 2]{OTh1}
for details.

\begin{remark}\label{REM:1} \rm
(i) Note that the limiting Gibbs measure $\rho$ in \eqref{Gibbs5}
is equivalent to the Gaussian free field $\mu$
in \eqref{gauss0}.  Namely, $\rho$ and $\mu$
are mutually absolutely continuous
with respect to each other.
Moreover, given any $N \in \N$, 
$\rho_N$ and $\rho$ are equivalent.

\smallskip

\noi
(ii)
The discussion above is restricted to the defocusing case.
In fact, in the focusing case, 
it is known that the truncated Gibbs measure on $\T^2$
(with a Wick-ordered $L^2$-cutoff)
does not converge   to any  limit in total variation, even up to a subsequence.
See \cite{BS, OST2, GOTT}.
We also refer
readers to 
\cite{OST1, OOT2}
%and the references therein
for an overview of the Gibbs measure construction
for NLS, the nonlinear wave equation, and the generalized KdV equation.
%(for both the defocusing and focusing cases).

\end{remark}

\subsection{Renormalized NLS} % and Bourgain's trick}

We now turn to the well-posedness problem
and discuss the construction of invariant Gibbs dynamics for NLS \eqref{NLS1}.
%the Gibbs measure $\rho$ in \eqref{Gibbs5}.
Given $N \in \N$, it follows from~\eqref{Wick1} 
that the dynamical problem
associated with the truncated Gibbs measure~$\rho_N$ in \eqref{Gibbs4}
is given by the following truncated Wick-ordered NLS on $\T^2$:
\begin{align}
i \dt u_N +  \Delta u _N=  \P_N\big\{ ( |\P_N u_N|^{2}  - 2\s_N)  \P_N u_N\big\}
%\begin{cases}
%i \dt u_N +  \Delta u _N=  \P_N\big\{ ( |\P_N u_N|^{2}  - 2\s_N)  \P_N u_N\big\} \\
%u_N|_{t = 0} = u_{0, N} \  \ \text{with } \ \Law(u_{0, N}) =  \rho_N, 
%\end{cases}
\label{NLS1a}
\end{align}

\noi
with $\Law(u_N(0)) = \rho_N$, 
where $\s_N$ is as in \eqref{sigma1}
and $\Law(X)$ denotes the law of a random variable $X$.
Then, it is easy to see that the truncated NLS \eqref{NLS1a} is $\rho_N$-almost surely globally well-posed
and that $\rho_N$ is invariant under
the dynamics of \eqref{NLS1a};
see
\cite[Lemma 5.1]{OTh1}.
In view of Remark~\ref{REM:1}\,(i), 
we see that 
\eqref{NLS1a} is also  globally well-posed
almost surely
with respect to $\rho$.
We  recall 
the main statement in \cite{BO96} regarding invariance of 
the limiting Gibbs measure~$\rho$ in \eqref{Gibbs5} under the dynamics
of the defocusing cubic NLS \eqref{NLS1a} on $\T^2$.

\begin{theorem}[Bourgain \cite{BO96}]\label{THM:0}
Given  $N \in \N$, 
let $\Phi_N$ be the solution map for the truncated NLS \eqref{NLS1a}, 
sending initial data $u_N(0)$ to the solution $u_N$. 
Then, there exists a set\footnote{We fix a sufficiently small constant $\eps > 0$ in the rest of this note.}
 $\Si \subset H^{-\eps}(\T^2)$
with $\rho(\Si) = 1$ such that 
for each $u_0 \in \Si$, 
as $N \to \infty$, 
the solution $\Phi_N(u_0)$
to~\eqref{NLS1a}
converges 
to a non-trivial limit
$u$ with $u|_{t=0} = u_0$
in 
$C(\R;  H^{-\eps}(\T^2))$, 
endowed with the compact-open
topology in time.
Furthermore, we have $\Law(u(t)) = \rho$
for any $t \in \R$.

\end{theorem}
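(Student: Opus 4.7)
The plan is to follow the Da Prato–Debussche decomposition used by Bourgain in~\cite{BO96} and to deploy the random tensor estimate (Lemma~\ref{LEM:RT}) in the key nonlinear bound, thereby replacing the delicate multiscale argument of~\cite{BO96}. Setting $z_N(t) = e^{it\Dl}\P_N u_0$ for the truncated linear evolution of initial data sampled from $\rho$, and writing $u_N = z_N + v_N$, one computes using the Wick formula \eqref{Wick1} that the remainder solves
\begin{align*}
(i\dt + \Dl) v_N
&= \P_N\Bigl\{ :\! |z_N|^2 z_N \!: +\, 2 :\! |z_N|^2 \!:\, v_N + z_N^2 \cj{v_N} \\
&\qquad + 2 z_N |v_N|^2 + v_N^2 \cj{z_N} + |v_N|^2 v_N \Bigr\}
\end{align*}
with $v_N(0) = 0$. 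The target is local well-posedness for $v_N$ at some positive Sobolev regularity $s > 0$ in $X^{s,b}([-T,T])$ with $b$ slightly above $\tfrac12$.

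The explicit stochastic objects $z_N$, $:\! |z_N|^2 \!:$, and $:\! |z_N|^2 z_N\!:$ are Gaussian polynomials in the base Gaussians $\{g_n\}$, whose convergence as $N \to \infty$ in suitable negative-regularity random distribution spaces follows from the Wiener chaos estimate (Lemma~\ref{LEM:hyp}) together with direct covariance computations. The crux is then to control the random paraproducts $:\! |z_N|^2 \!:\, v_N$ and $z_N^2 \cj{v_N}$ in $X^{s-1,b-1}$: the classical $L^4$-Strichartz bilinear bound is insufficient since $z_N$ has regularity only $-\eps$. My plan is to view each such term as a bilinear operator acting on $v_N$ (or $\cj{v_N}$) whose kernel is a random tensor built from $\{g_n/\jb{n}\}$, and to invoke Lemma~\ref{LEM:RT} to produce, off an exceptional set of small probability, a deterministic bound of the form
\[ \|z_N^2 \cj{v_N}\|_{X^{s-1,b-1}} + \|:\! |z_N|^2 \!:\, v_N\|_{X^{s-1,b-1}} \les T^\theta \|v_N\|_{X^{s,b}} \]
with some $\theta > 0$. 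The remaining fully deterministic trilinear terms in $v_N$ are handled by Bourgain's $L^4$-Strichartz bound, so a contraction in $X^{s,b}([-T_\o,T_\o])$ yields a unique local solution $v_N$ on a random interval whose length depends only on the norms of the stochastic data. Combined with the convergence of the stochastic data, this gives $v_N \to v$ and hence $\Phi_N(u_0) = z_N + v_N \to u = z + v$ in $C([-T_\o,T_\o]; H^{-\eps}(\T^2))$, $\rho$-almost surely.

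For globalization I would invoke Bourgain's invariant measure argument: since $\Phi_N$ preserves $\rho_N$ and $\rho_N \to \rho$ in total variation, a union bound over a partition of an arbitrary interval $[-T,T]$ into subintervals of length $T_\o$, combined with uniform-in-$N$ probabilistic control of the exceptional set from the local theory, produces a $\rho$-full measure set $\Si \subset H^{-\eps}(\T^2)$ on which the solution exists globally and $\Phi_N(u_0) \to u$ in $C(\R; H^{-\eps}(\T^2))$ uniformly on compact time intervals. Invariance $\Law(u(t)) = \rho$ then follows by passing to the limit $N \to \infty$ in the identity $\Law(\Phi_N(u_0)(t)) = \rho_N$, using the total-variation convergence $\rho_N \to \rho$ together with the $\rho$-a.s.\ local convergence of the flow.

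The main obstacle is the random tensor bound on the two paraproducts $z_N^2 \cj{v_N}$ and $:\! |z_N|^2 \!:\, v_N$; this is the whole point of the streamlining, since in~\cite{BO96} these interactions are treated by an intricate high–high / high–low frequency case analysis that the random tensor framework now absorbs into a single deterministic estimate.
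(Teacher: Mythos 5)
Your high-level plan does follow the paper's route (Da Prato--Debussche decomposition, random tensor estimate, Bourgain's invariant measure argument for globalization), but there is a genuine gap in the local theory: you omit the gauge transformation \eqref{C1}--\eqref{C2}, and this omission is not cosmetic. With the constant $2\s_N$, your Wick ordering removes only the \emph{expectation} of the singly-resonant contributions, not their random fluctuations. Concretely, the off-diagonal part of $2:\! |z_N|^2 \!:\, v_N$ (i.e.\ $n_1 \ne n_2$, so $n \ne n_3$) still contains the singly-resonant subsum $n = n_1$, $n_2 = n_3$, which is roughly $\ft{z_N}(n)\,\langle v_N, z_N\rangle_{L^2}$; the term $z_N^2 \cj{v_N}$ likewise retains $n = n_1$ and $n = n_3$ contributions. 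These pieces have the Fourier profile of $z_N$ itself, and a direct computation gives
\[
\big\|\ft{z_N}(n)\,\langle v_N,z_N\rangle\big\|_{X^{s,-b'}}^2 \sim \Big(\sum_{n\in\Z^2}\frac{|g_n|^2}{\jb{n}^{2-2s}}\Big)\,\|\langle v_N,z_N\rangle\|_{H^{-b'}_t}^2,
\]
and the $n$-sum diverges almost surely for every $s>0$. So no bound of the form $\les T^\ta\|v_N\|_{X^{s,b}_T}$ is available, and the fixed point for $v_N$ at positive regularity cannot close. Equivalently, $\P_N u_N = e^{-2it\al_N}\P_N v_N^{\mathrm{gauge}}$, so your residual contains the piece $(e^{-2it\al_N}-1)z_N$ at regularity $-\eps$, which cannot live in $X^{s,b}$ with $s>0$. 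The paper's gauge transformation replaces $2\s_N$ by the solution-dependent $2\int_{\T^2}|\P_N v_N|^2\,dx$, which removes the singly-resonant interactions \emph{exactly} (not just in mean), producing the nonlinearity $\Nf=\NN-\RR$ with the built-in Fourier constraint $n\ne n_1,n_3$ (see \eqref{non1}--\eqref{non2}); that constraint is precisely what makes the divisor-counting estimates of Lemma~\ref{LEM:count1} valid (cf.\ Remark~\ref{REM:count}) and what lets the residual gain regularity.

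A secondary issue: you lump $2 z_N |v_N|^2$ and $\cj{z_N} v_N^2$ into the ``remaining fully deterministic trilinear terms'' covered by the $L^4$-Strichartz estimate, but each has one random input $z_N$ at regularity $-\eps$, so the deterministic trilinear bound \eqref{tri1} alone does not close for them. In the paper, the corresponding terms $\NN(z,w,w)$ and $\NN(w,z,w)$ (Sections~\ref{SEC:RT3}--\ref{SEC:RT4}) are treated by a frequency-localized case analysis combining the refined deterministic estimate \eqref{tri3} (when the $w$-frequencies dominate) with the random tensor estimate applied to the bilinear operator acting on $w\otimes w$ (Remark~\ref{REM:tensor}) when the $z$-frequency dominates; these cases still need the randomness.
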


Formally speaking, 
Theorem \ref{THM:0}
states that the Gibbs measure $\rho$ in \eqref{Gibbs5}
is invariant under the following (formal) Wick-ordered NLS:
\begin{align*}
i \dt u +  \Delta u =  ( | u|^{2}  - 2\cdot \infty)   u, 
\end{align*}

\noi
where $\infty = \lim_{N\to \infty}\s_N$.

The proof of Theorem \ref{THM:0} in \cite{BO96} is based
on studying the following gauge-equivalent formulation.
Given $N \in \N$, 
let  $u_N$ be a solution to the truncated NLS \eqref{NLS1a}
and
define $\al_N$ by setting 
\begin{align}
\al_N = \|\P_N u_N(0)\|_{L^2}^2 - \s_N.
\label{C1}
\end{align}

\noi
Then, by setting
\begin{align}
v_N(t)  = \P_N e^{2 it \al_N} u_N(t) + \P_N^\perp u_N(t),
\label{C2} 
\end{align}

\noi
where $\P_N^\perp = \Id - \P_N$, 
we see  that 
$v_N$ satisfies 
the following truncated NLS:
\begin{align}
\begin{cases}
i \dt v_N +  \Delta v _N=  \P_N\big\{ ( |\P_N v_N|^{2}  - 2\int_{\T^2} |\P_N v_N|^2 dx )  \P_N v_N\big\}\\
v_N|_{t = 0} = u_N(0).
\end{cases}
%
%\begin{cases}
%i \dt v_N +  \Delta v _N=  \P_N\big\{ ( |\P_N v_N|^{2}  - 2\int |\P_N v_N|^2 dx )  \P_N v_N\big\} \\
%v_N|_{t = 0} = u_{0, N} \  \ \text{with } \ \Law(u_{0, N}) =  \rho_N, 
%\end{cases}
\label{NLS1c}
\end{align}

\noi
Note from 
\eqref{C2} that  $\rho_N$
is invariant under \eqref{NLS1c}.
By formally taking $N \to \infty$, 
we obtain the following renormalized NLS:\footnote{The equation
\eqref{NLS2}
is often referred to as the Wick-ordered NLS, 
since
its frequency-truncated version~\eqref{NLS1c} is equivalent to the (truncated) Wick-ordered
NLS \eqref{NLS1a}.
See, for example, \cite{Christ, GH, GO, FOW, OW2, OW3}.}
\begin{align}
\label{NLS2}
\textstyle
i \partial_t v + \Delta v = 
\Nf(v)
\stackrel{\text{def}}{=} \big( |v|^2 - 2 \int_{\T^2}|v|^2 dx \big) v.
%\begin{cases}
%i \partial_t v + \Delta v = \big( |v|^2 - 2 \int|v|^2 dx \big) v\\
%u|_{t=0} = u_0^\o 
%\end{cases}
\end{align}

\begin{theorem}\label{THM:1}

\textup{(i)}
The renormalized NLS \eqref{NLS2}
is $\mu$-almost surely locally well-posed, 
where $\mu$ is the Gaussian free field defined
in \eqref{gauss0}.
More precisely,  there exists a set
 $\Si \subset H^{-\eps}(\T^2)$
with $\mu(\Si) = 1$ such that 
for each $u_0 \in \Si$, 
there exists a unique local-in-time solution $v$ to \eqref{NLS2}
with $v|_{t = 0} = u_0$.
Furthermore, 
for each $u_0 \in \Si$, 
by possibly shrinking the local existence time $T = T(u_0) > 0$, 
the solution $v_N$ to \eqref{NLS1c}
with $v_N|_{t = 0} = u_0$ 
converges to $v$
in $C([-T, T];  H^{-\eps}(\T^2))$
as $N \to \infty$.

\smallskip

\noi
\textup{(ii)}
The renormalized NLS \eqref{NLS2}
is $\rho$-almost surely globally  well-posed
and the Gibbs measure~$\rho$ is invariant under the resulting dynamics.
More precisely,  there exists a set
 $\Si \subset H^{-\eps}(\T^2)$
with $\rho(\Si) = 1$ such that 
for each $u_0 \in \Si$, 
there exists a unique global-in-time solution $v$ to~\eqref{NLS2}
with $v|_{t = 0} = u_0$.
Furthermore, 
for each $u_0 \in \Si$, 
the global-in-time solution $v_N$ to~\eqref{NLS1c}
with $v_N|_{t = 0} = u_0$ 
converges to $v$
in $C(\R;  H^{-\eps}(\T^2))$, 
endowed with the compact-open
topology in time, 
as $N \to \infty$.

\end{theorem}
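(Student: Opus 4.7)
The plan is to adopt the first-order Da Prato--Debussche decomposition $v = z + w$, where $z(t) := e^{it\Delta}u_0$ is the linear evolution of the random Gaussian data $u_0 \sim \mu$ and the remainder $w$ is sought in a Bourgain space $X^{\frac12+\eps,\frac12+\eps}$ of positive regularity in which a cubic contraction can be closed. Setting $z_N = e^{it\Delta}\P_N u_0$, I would first build the basic stochastic objects: the Wick cubic $:\!|z_N|^2 z_N\!: \;=\; (|z_N|^2 - 2\s_N)\, z_N$, to be shown convergent as $N\to\infty$ in $X^{-\eps,-\frac12+\eps}([-T,T])$ via Wick's formula and the Wiener chaos estimate (Lemma~\ref{LEM:hyp}), together with the resonant quadratic $\int_{\T^2}|\P_N z_N|^2\, dx - \s_N$ coming from the mass counter-term in \eqref{NLS2}. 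Substituting $v = z + w$ into \eqref{NLS2} and grouping the purely stochastic pieces on the right, the equation for $w$ becomes
\[
(i\dt + \Delta) w \;=\; :\!|z|^2 z\!: \;+\; \mathcal{R}(z,w),
\]
where $\mathcal{R}(z,w)$ is a finite sum of multilinear terms containing at least one $w$ and at most two factors of $z$, each paired with the appropriate Wick or mass renormalization to cancel the divergent $\s_N$.

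To close the Picard iteration for $w$, the purely deterministic piece $|w|^2 w$ is handled by Bourgain's $L^4_{t,x}$ Strichartz estimate on $\T^2$. The delicate contributions are the quadratic-in-$z$, linear-in-$w$ trilinear terms such as $\P_N\{z_N\bar z_N w\}$, where the high--high-to-low resonant interaction defeats the deterministic cubic $X^{s,b}$ theory once two factors sit at regularity $-\eps$. Here I would invoke the random tensor estimate (Lemma~\ref{LEM:RT} of Deng--Nahmod--Yue~\cite{DNY3}), viewing the associated kernel
\[
h_{n, n_3}(\o) \;=\; \sum_{\substack{n_1, n_2 \in \Z^2 \\ n_1 - n_2 \,=\, n - n_3}} \frac{g_{n_1}(\o)\,\overline{g_{n_2}(\o)} - \delta_{n_1, n_2}}{\jb{n_1}\jb{n_2}}
\]
(with the appropriate frequency and modulation cut-offs) as a random tensor in $(n,n_3)$ whose $\ell^2\to\ell^2$ operator norm admits, on an event of probability $\ge 1 - e^{-N^{c}}$, a bound effectively giving nearly a half-derivative of gain over the deterministic estimate. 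The other mixed terms are handled analogously. This closes the contraction in $X^{\frac12+\eps,\frac12+\eps}([-T,T])$ on a random time interval $T = T(u_0) > 0$, and the same stability estimates give $v_N \to v$ in $C([-T,T]; H^{-\eps}(\T^2))$, proving~(i).

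For part~(ii) I would follow Bourgain's invariant measure strategy. The truncated flow \eqref{NLS1c} preserves $\rho_N$ (see \cite[Lemma~5.1]{OTh1}), and the local theory above is quantitative: on the event $A_{R,N} = \{\text{norms of all stochastic objects built from }\P_N u_0 \le R\}$, one obtains a deterministic existence time $T \gtrsim R^{-\theta}$ for some $\theta > 0$. The uniform Gaussian tail $\rho_N(A_{R,N}^c) \lesssim e^{-cR^2}$ (transferred to $\rho$ via Remark~\ref{REM:1}\,(i)), combined with Bourgain's iteration of the local bound on consecutive random subintervals of size $\sim R^{-\theta}$ and a Borel--Cantelli argument, extends the flow globally $\rho$-almost surely and yields $v_N \to v$ in $C_{\rm loc}(\R; H^{-\eps}(\T^2))$. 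Invariance of $\rho$ then follows by letting $N\to\infty$ in the identity $(\Phi^N_t)_*\rho_N = \rho_N$, using the total variation convergence $\rho_N \to \rho$ recalled after \eqref{Gibbs5}.

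The main obstacle is the random tensor bound controlling the worst high--high-to-low resonant interaction $z_{\rm hi}\,\bar z_{\rm hi}\,w_{\rm lo}$: one must simultaneously cancel, via the mass renormalization, the logarithmic divergence $\s_N \sim \log N$ and extract a nearly half-derivative probabilistic gain that is uniform in $N$ and summable over dyadic frequency shells. It is precisely at this step that the random tensor framework of \cite{DNY3} streamlines Bourgain's original case-by-case paraproduct-and-Strichartz analysis.
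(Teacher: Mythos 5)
Your proposal follows the same architecture as the paper: the first-order Da\;Prato--Debussche decomposition $v=z+w$, with the residual placed in a positive-regularity Bourgain space, the random-tensor estimate of Deng--Nahmod--Yue handling the mixed $z$--$w$ trilinear interactions, and Bourgain's invariant-measure argument upgrading the local theory to global well-posedness with $\rho$-invariance. This is exactly the roadmap of the paper, so the high-level plan is sound.

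There is, however, a concrete error in the choice of function space for $w$. You propose to close the contraction in $X^{\frac12+\eps,\frac12+\eps}$, but the first Picard iterate already fails to land in this space. The purely stochastic term $\NN(z,z,z)$ lies in $X^{s,-\frac12+2\eps}$ only for $s<\frac12$ (Proposition~\ref{PROP:PS1} explicitly requires $s<\frac12$; the obstruction is the counting bound $|S^{{\bf N},(m)}|^{1/2}\les N_{\max}^{1/2+\eps}N_{\med}^{1/2+\eps}N_{\min}$, which gives the factor $N_{\max}^{-1/2+s+O(\eps)}$ in the final dyadic sum). The same constraint reappears in the random-tensor terms: Lemma~\ref{LEM:RT1a} yields the operator-norm bound $(N_1N_3)^{-\frac12+s+\eps}$, summable over dyadic $N_1,N_3$ only when $s<\frac12-\eps$. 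With $s=\frac12+\eps$ every one of these dyadic sums diverges, and the fixed-point scheme collapses at the very first step. The correct range is $0<s<\frac12$; the paper works with ``small $s>0$'' (in fact one needs $s$ bounded away from both $0$ and $\frac12$, since the term $\NN(z,w,w)$ contributes a gain $N_1^{-s+\eps}$ requiring $s>\eps$). With $w\in X^{s,\frac12+\eps}$ for such $s$, your argument goes through as described. One further point worth flagging: your phrasing suggests subtracting expectations (the $\delta_{n_1,n_2}$ term) as if one were Wick-ordering the kernel, but in the renormalized equation \eqref{NLS2} the nonlinearity $\Nf=\NN-\RR$ already structurally excludes the diagonal $n=n_1$ or $n=n_3$; the residual doubly-resonant piece $\RR$ must then be estimated separately (as in Section~\ref{SEC:res}), which your outline does not explicitly address.
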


We first note that
 Theorem \ref{THM:1}\,(ii)
follows 
from
 the local-in-time result (Theorem \ref{THM:1}\,(i))
 and Bourgain's invariant measure argument \cite{BO94, BO96}
 together with the invariance of the truncated Gibbs measure
 $\rho_N$ for \eqref{NLS1c};
 see 
\cite{BT2, ORTz}
for details of such an argument. 
Moreover, 
by noting that $\al_N$ in \eqref{C1}
converges $\mu$-almost surely  (and hence $\rho$-almost surely) 
to a finite limit as $N \to \infty$, 
Theorem \ref{THM:0} follows
from 
 Theorem \ref{THM:1}\,(ii) and \eqref{C2}.

\medskip

Our main goal in this note
is to revisit the proof of 
the $\mu$-almost sure local well-posedness of the renormalized NLS \eqref{NLS2}
by using 
tools and ideas 
from the recent development in the field, 
in particular 
the random
tensor estimate (Lemma~\ref{LEM:RT}) developed
by Deng, Nahmod, and Yue~\cite{DNY3}.
Note that  the local-in-time convergence claimed in Theorem \ref{THM:1}\,(i) follows from 
a straightforward modification
of such an argument, 
and hence we omit details.

Before proceeding further, 
we first recall  the known deterministic results
for \eqref{NLS1}.
In \cite{BO93}, Bourgain introduced the Fourier restriction norm method, 
based on the $X^{s, b}$-space defined via the norm:
\begin{align}
\| u \|_{X^{s, b}} = \|\jb{n}^s\jb{\tau +|n|^2}^b \ft u(n, \tau)\|_{\l^2_n L^2_\tau}, 
\label{Xsb}
\end{align}

\noi
and 
proved local well-posedness of \eqref{NLS1}
in $H^s(\T^2)$ for any $s > 0$
by establishing  the following trilinear estimate;
given 
any $s > 0$ and
$b_1 > b_2 > \frac 12$ sufficiently close to $\frac 12$, 
we have 
\begin{align}
\|u_1 \cj u_2 u_3\|_{X^{s, b_1-1}_T} \les  \prod_{j = 1}^3 \|u_j\|_{X^{s, b_2}_T}
\label{tri1}
\end{align}

\noi
for any  $0 < T \le 1$, 
where $X^{s, b}_T$ denotes the local-in-time version of the $X^{s, b}$-space
defined in~\eqref{Xsb2}.
This local well-posedness is essentially sharp
in the sense that, 
when $s < 0$, 
\eqref{NLS1} is known to be ill-posed in $H^s(\T^2)$; 
see~\cite{Oh17}.
A key ingredient for the trilinear estimate~\eqref{tri1}
is the following 
$L^4$-Strichartz estimate %(with a derivative loss) 
due to Bourgain \cite{BO93}:
\begin{align}
\| e^{it \Dl}  \P_N f\|_{L^4([0, 1]; L^4(\T^2))} \les N^s \|\P_N f\|_{L^2}
\label{Str1}
\end{align}

\noi
for any $s> 0$.
 We point out that  \eqref{Str1}
fails when $s = 0$ and 
the well-posedness issue of the cubic NLS
in the critical space $L^2(\T^2)$ remains
a challenging open problem.
See \cite[Remark~4.3]{Oh13} and \cite[Section~3]{Kishimoto}
(see also \cite{TTz})
for a sharp lower bound
and  a recent breakthrough work \cite{HK24} for a sharp upper bound
for the $L^4$-Strichartz estimate on $\T^2$.

Let us turn our attention to the renormalized NLS \eqref{NLS2}.
Define the trilinear operators
$\NN$ and $\RR$
by setting
\begin{align}
\begin{split}
\F_x\big(\NN(v_1,v_2,v_3)\big)(n) 
& = \sum_{\substack{n = n_1 - n_2 + n_3\\ n\neq n_{1},n_{3}}}
\ft v_1 (n_1) \cj{\ft v_2 (n_2)} \ft v_3 (n_3), \\
\F_x\big(\RR(v_1,v_2,v_3)\big) (n) 
& = 
\ft v_1 (n) \cj{\ft v_2 (n)} \ft v_3 (n), 
\end{split}
\label{non1}
\end{align}

\noi
where $\F_x$ denotes the Fourier transform on $\T^2$.
For simplicity, 
we set
$\NN(v) = \NN(v, v, v)$
and $\RR(v) = \RR(v, v, v)$.
Then, from \eqref{NLS2} and \eqref{non1}, we have 
\begin{align}
\Nf(v) =  \NN (v) - \RR (v).
\label{non2}
\end{align}

\noi
The first term $\NN(v)$ represents the non-resonant contribution, 
while 
the second term $\RR(v)$ represents the (doubly) resonant contribution.
We point out that 
the renormalization in $\Nf(v)$
essentially removes the resonant contribution from $n = n_1$ or $n_3$
in the usual cubic nonlinearity (at the expense
of adding the doubly resonant term $\RR(v)$)
but that 
the well-\,/\,ill-posedness
results for \eqref{NLS1} mentioned above
also apply to the renormalized NLS \eqref{NLS2}.
In particular, 
while the Gaussian free field $\mu$ is supported on  $H^s(\T^2) \setminus L^2(\T^2)$, 
$s < 0$, 
the renormalized NLS~\eqref{NLS2} is ill-posed in $H^s(\T^2)$
for any $s < 0$.

In order to overcome this difficulty, 
Bourgain \cite{BO96}
considered the following first order expansion:
\begin{align*}
v(t) = z(t) + w(t),  
\end{align*}

\noi
where 
\begin{align}
z(t)= z^\o(t) = e^{it \Dl} u_0^\o
=  \sum_{n \in \Z^2 } \frac{ g_n(\o)}{\jb{n}} e^{-it|n|^2}e_n
\label{lin1}
\end{align}

\noi
 denotes the random linear solution with $\Law(u_0^\o) = \mu$, 
and studied the following equation satisfied by the residual term $w$:
\begin{align}
\label{NLS3}
\begin{cases}
i \partial_t w + \Delta w = 
\Nf(w+z)\\
w|_{t = 0} = 0.
\end{cases}
\end{align}

\noi
See also \cite{McK, DPD}.
By writing \eqref{NLS3} in the Duhamel formulation, we have
\begin{align*}
w(t) = - i \int_0^t e^{i (t - t') \Dl} \Nf(w+z)(t') dt'.
\end{align*}

\noi
Then, 
in view of 
the nonhomogeneous standard linear estimate (see
\cite{BO93, KPV93, TAO}):
\begin{align*}
\bigg\| \int_0^te^{i(t- t')\Dl} F(t') dt' \bigg\|_{X^{s, \frac 12 + \eps}_T}\les 
T^\eps \|F\|_{X^{s, - \frac 12 + 2\eps}_T}
\end{align*}

\noi
for any sufficiently small $\eps > 0$, 
almost sure local well-posedness of \eqref{NLS3}
(and hence Theorem~\ref{THM:1}\,(i))
follows once we prove the following proposition.\footnote{We also need
a multilinear version of \eqref{tri2}.
Since it follows from a straightforward modification of the proof of \eqref{tri2}, %Proposition \ref{PROP:1}, 
we omit details.}

\begin{proposition}\label{PROP:1}
Let $\Nf(v)$ be as in \eqref{non2}.
Given small   $s > 0$ and $\eps > 0$, 
there exist  $c_1, c_2 > 0$ 
with the following property; 
given  $0 < T \ll 1$, there exists $\Si_T \subset H^{-\eps}(\T^2)$
with $\mu (\Si_T^c) < e^{-\frac{c_1}{ T^{c_2}}}$ such that 
we have 
\begin{align}
\|\Nf (w + z)\|_{X^{s, -\frac 12 +2 \eps}_T} 
\les 1
\label{tri2}
\end{align}

\noi
for any    $w$ with $\|w\|_{X^{s, \frac 12 + \eps}_T} \le 1$
and $u_0 \in \Si_T$, 
where 
% $\Nf(v_1, v_2, v_3) =  \NN (v_1, v_2, v_3) + \RR (v_1, v_2, v_3)$
%and 
$z(t) = e^{it \Dl} u_0$. % denotes the random linear solution with $\Law(u_0) = \mu$.

\end{proposition}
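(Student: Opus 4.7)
The plan is to expand $\Nf(w+z)$ by trilinearity as
\[
\Nf(w+z) = \sum_{(\al_1,\al_2,\al_3)\in\{w,z\}^3}\bigl[\NN(\al_1,\al_2,\al_3)-\RR(\al_1,\al_2,\al_3)\bigr],
\]
group the eight resulting terms by the number of $z$-factors, and apply a different strategy to each group. Throughout, the small parameter $T^\eps$ coming from the nonhomogeneous linear estimate in the $X^{s,b}_T$ framework will absorb the various $\eps$-losses.

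The purely deterministic piece ($\al_1=\al_2=\al_3=w$) reduces immediately to Bourgain's trilinear estimate \eqref{tri1} at some small $s>0$, using the hypothesis $\|w\|_{X^{s,\frac12+\eps}_T}\le 1$; the diagonal correction $\RR(w,w,w)$ is even simpler as it is pointwise controlled by $|w|^3$. For the purely stochastic piece $\Nf(z)=\NN(z)-\RR(z)$, I would use that the non-resonant restriction $n\ne n_1,n_3$ in \eqref{non1} together with the subtraction of $\RR(z)$ precisely cancels the diagonal Gaussian pairings responsible for the logarithmic divergence $\s_N\sim \log N$, so that the surviving object lies in the third Wiener chaos. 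I would then estimate its expected squared $X^{s,-\frac12+2\eps}$-norm by an explicit counting argument over $(n,n_1,n_2,n_3)$, exploiting the dispersive modulation weight $\jb{\tau+|n|^2}^{-1+4\eps}$, and upgrade this $L^2(\O)$ bound to an $L^p(\O)$ bound for large $p$ via the Wiener chaos estimate (Lemma~\ref{LEM:hyp}), producing the Gaussian-type tail $e^{-c_1/T^{c_2}}$ after Chebyshev and a dyadic union bound.

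For the mixed pieces linear in $z$ and bilinear in $w$, I would combine the trilinear estimate \eqref{tri1} with a probabilistic improvement of the $L^4$-Strichartz estimate \eqref{Str1}: Gaussian concentration shows that $z=e^{it\Dl}u_0^\omega$ satisfies a dyadic $L^4_{t,x}$-bound $\|\P_N z\|_{L^4([0,T]\times\T^2)}\les T^\eps$ (modulo logarithms) with high probability, which more than compensates for $z\notin L^2$. The delicate group consists of the pieces quadratic in $z$ and linear in $w$, namely $\NN(z,z,w)-\RR(z,z,w)$, $\NN(z,w,z)-\RR(z,w,z)$, and $\NN(w,z,z)-\RR(w,z,z)$, each of which I would view as a random bilinear-in-$z$ operator $\TT_\omega$ acting on $w$. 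The non-resonance structure of $\NN$ (excluding the contractions $n_2=n_1$ and $n_2=n_3$) together with the $\RR$ counterterm (handling the double diagonal) guarantees that the Fourier symbol of $\TT_\omega$ is a mean-zero second-chaos Gaussian kernel in the input/output frequencies. To then establish
\[
\|\TT_\omega w\|_{X^{s,-\frac12+2\eps}_T}\les \|w\|_{X^{s,\frac12+\eps}_T}
\]
with probability at least $1-e^{-c_1/T^{c_2}}$, I would invoke the Deng--Nahmod--Yue random tensor estimate (Lemma~\ref{LEM:RT}) applied to the random tensor encoding the entries of the symbol of $\TT_\omega$, with dyadic frequency decompositions in both $w$ and $z$.

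The main obstacle is precisely this quadratic-in-$z$, linear-in-$w$ group. A naive approach that separately takes a Wiener chaos bound on the random bilinear factor and then applies the deterministic trilinear estimate loses a logarithmic factor in each pair of dyadic frequency blocks and fails to close the dyadic summation; it is exactly this step that forced the intricate analysis in \cite{BO96}. The random tensor estimate bypasses this difficulty by yielding a sharp operator-norm bound with only $\eps$-losses, exploiting the orthogonality of Gaussian chaos across frequency blocks. Combining the four groups above and taking a union bound over dyadic scales produces the exceptional set $\Si_T\subset H^{-\eps}(\T^2)$ with $\mu(\Si_T^c)<e^{-c_1/T^{c_2}}$, completing the proof of \eqref{tri2}.
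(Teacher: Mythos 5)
Your decomposition of $\Nf(w+z)$ into the eight trilinear pieces, and your treatment of the purely deterministic term $\NN(w,w,w)$, the purely stochastic term $\NN(z,z,z)$, and the quadratic-in-$z$ terms $\NN(z,w,z)$, $\NN(w,z,z)$ (via the random tensor estimate) all mirror the paper's strategy. The genuine gap is in the group \emph{linear} in $z$ and \emph{quadratic} in $w$. You propose to treat $\NN(z,w,w)$ and $\NN(w,z,w)$ by combining the deterministic trilinear estimate \eqref{tri1} with a probabilistic improvement of the $L^4$-Strichartz estimate for $z$, but this does not close when $\Q_{N_1}z$ carries the largest frequency. In that regime the output frequency $N$ is comparable to $N_1$, so the $X^{s,-\frac12+2\eps}_T$-target pays the weight $\jb{n}^s\sim N_1^s$; the two $w$-factors at frequencies $N_2,N_3\ll N_1$ contribute at best $N_2^{-s}N_3^{-s}\le 1$; and $z$, living only in $H^{-\eps}$, costs a further $N_1^{\eps}$. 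The probabilistic bound $\|\Q_{N_1}z\|_{L^4_{t,x}([0,T]\times\T^2)}\lesssim T^{1/4}$ only removes the deterministic $N_1^{0+}$ loss from \eqref{Str1}; it does not manufacture the needed factor $N_1^{-s}$. Nor can \eqref{tri3} be applied with $z$ in the highest-frequency slot, since that estimate requires the highest-frequency input to carry the positive regularity $s>0$. One is left with a net $\sim N_1^{s+\eps}$ that does not sum over dyadic $N_1$.

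This is precisely why the paper labels $\NN(z,w,w)$ and $\NN(w,z,w)$ ``random tensor terms III and IV'' and devotes Sections~\ref{SEC:RT3} and~\ref{SEC:RT4} to them: the map $(w_2,w_3)\mapsto\NN(\Q_{N_1}z,w_2,w_3)$ is viewed as a random linear operator (a first-chaos tensor in the single Gaussian layer of $z$), and its operator norm is controlled by Lemma~\ref{LEM:RT} together with the base-tensor bounds of Lemma~\ref{LEM:basetensor}; the counting estimates of Lemma~\ref{LEM:count1} deliver exactly the missing gain $N_1^{-s+\eps}$ (Lemma~\ref{LEM:RT3a}), and in the regime where both the output $n$ and the large $w$-frequency $n_3$ are $\gg N_1$ and nearly parallel one additionally needs the variant Lemma~\ref{LEM:RT}\,(iii) (Subcase~4.b of Proposition~\ref{PROP:RT3}). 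A smaller inaccuracy: you write that subtracting $\RR(z)$ ``cancels the diagonal Gaussian pairings responsible for $\s_N$''; those contractions are already excluded by the constraint $n\ne n_1,n_3$ in the definition of $\NN$, so $\NN(z,z,z)$ is a third-chaos object on its own. The resonant contribution $\RR(w+z)$ is not cancelled against anything, but is estimated directly by H\"older, Young, and Lemma~\ref{LEM:prob1} in Section~\ref{SEC:res}.
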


In view of 
\eqref{non2}, 
the multilinearity of $\NN$ and $\RR$, 
and the symmetry in the first and third arguments in~\eqref{non1}, 
it suffices to estimate  the following terms:

\smallskip

\begin{itemize}
\item[(i)]
resonant term: $\RR(w+z)$; see Section \ref{SEC:res},

\smallskip

\item[(ii)]
``deterministic'' term:
$\NN(w, w, w)$; see \eqref{tri1},

\smallskip

\item[(iii)]
purely stochastic term:
$\NN(z, z, z)$;
 see Section \ref{SEC:PS1},

\smallskip

\item[(iv)]
random tensor term I: $\NN(z, w, z)$; see Section \ref{SEC:RT1},

\smallskip

\item[(v)]
random tensor term II: $\NN (w, z, z)$; see Section \ref{SEC:RT2},

\smallskip

\item[(vi)]
random tensor term III: $\NN(z, w,w)$; see Section \ref{SEC:RT3},

\smallskip

\item[(vii)]
random tensor term IV:  $\NN (w, z, w)$; see Section \ref{SEC:RT4}, 

\end{itemize}

\smallskip

\noi
where, with a slight abuse of notation, 
we refer to the terms (iv) - (vii) 
as random tensor terms.
Since the contribution from 
the
``deterministic'' term
$\NN(w, w, w)$
is already handled in~\eqref{tri1}, 
the rest of this note is devoted
to estimating the contributions
from the other terms.

\begin{remark}\rm
We conclude this introduction
by mentioning several works related
to Theorem~\ref{THM:0}.
In \cite{FOSW}, 
Fan, Ou, Staffilani, and Wang
extended 
Theorem \ref{THM:0}
 to the irrational torus case.
In \cite{Zine}, 
Zine established
convergence of invariant Gibbs dynamics
for the stochastic  complex 
Ginzburg-Landau equation on $\T^2$
with the defocusing cubic nonlinearity 
to that for the defocusing cubic NLS on $\T^2$
as the dissipation and the noise vanish
in an appropriate manner.
\end{remark}

\section{Preliminaries}
\label{SEC:2}

\subsection{Notations and basic deterministic estimates}

Let $A\les B$ denote an estimate of the form $A\leq CB$ for some constant $C>0$. We write $A\sim B$ if $A\les B$ and $B\les A$, while $A\ll B$ denotes $A\leq c B$ for some small constant $c> 0$. 
We may write  $\les_{\al}$ and $\sim_{\al}$ to 
emphasize the dependence on an external parameter $\al$.
We use $C>0$ to denote various constants, which may vary line by line.

Given dyadic $N\ge 1$, 
we use $\Q_N$ to denote the 
Littlewood-Paley projector onto the frequencies
$\{|n| \sim N\}$ for $N \ge 2$ (and $\{|n| \les 1\}$ when $N = 1$).
We then set
\begin{align}
\Sb_N f= \sum_{\substack{1 \le M \le N\\\text{dyadic}}}\Q_Mf
\qquad \text{and}
\qquad 
\Sb_N^\perp f = f - \Sb_N f.
\label{LP2}
\end{align}

\noi
Given dyadic numbers $N_1, N_2, N_3 \ge 1$, 
we use $N_{\max}$, $N_{\med}$, and $N_{\min}$
to denote their decreasing rearrangement:
\begin{align}
N_{\max} \ge N_{\med} \ge N_{\min}.
\label{ord1}
\end{align}

We use $\eta \in C^\infty_c(\mathbb{R})$ to denote a smooth cutoff function supported on $[-2, 2]$ with $\eta \equiv 1$ on $[-1, 1]$,  and let $\eta_{_T}(t) =\eta(T^{-1}t)$.
Then, we have
\begin{equation}
 \|\ft \eta_{_{T}}\|_{L^q_\tau} \sim T^\frac{q-1}{q} \|\ft \eta\|_{L^q_\tau} 
\sim  T^\frac{q-1}{q}
\label{decay}
\end{equation} 

\noi
for any $1 \le q \le \infty$
and $T > 0$.

Given $T > 0$, we define the space $X_T^{s,b}$ to be the restriction of 
the $X^{s, b}$-space defined in~\eqref{Xsb} onto the time interval 
$[-T, T]$ via the norm:
\begin{equation}
\| u \|_{X_T^{s,b}} := \inf \big\{ \| v \|_{X^{s,b}}: v|_{[-T, T]} = u \big\}.
\label{Xsb2}
\end{equation}

\noi
Note that $X^{s, b}_T$ is complete.
Given any $s \in \R$ and $b > \frac 12$, we have $X_T^{s,b} \subset C([-T, T]; H^s(\T^2))$.
Given any $s, b \in \R$, 
the following homogeneous linear estimate holds:
\begin{align}
\| e^{it \Dl} f\|_{X^{s, b}_T}
\les_b \| f \|_{H^s}
\label{lin2}
\end{align}

\noi
for any $0 < T \le 1$.
We also recall the following lemma;
see \cite[Lemma 2.11]{TAO}.

\begin{lemma}
\label{LEM:decay1} Let $s \in\R$ and $- \frac 12 < b_1 \le b_2 < \frac 12$.
Then, we have 
\begin{equation*}
 \|u\|_{X^{s, b_1}_T}  \leq C T^{b_2 - b_1} \|u\|_{X^{s, b_2}_T}
\end{equation*}

\noi
for any $0 < T \le 1$. 

\end{lemma}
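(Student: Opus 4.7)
The approach is the standard extension plus time-cutoff argument for restricted $X^{s,b}$ spaces. Since $\jb{\nabla}^s$ is an isometry from $X^{0,b}_T$ onto $X^{s,b}_T$, we immediately reduce to the case $s=0$. Given $\delta>0$, the definition of the restricted norm~\eqref{Xsb2} furnishes an extension $v$ of $u|_{[-T,T]}$ to all of $\R\times\T^2$ with $\|v\|_{X^{0,b_2}}\leq(1+\delta)\|u\|_{X^{0,b_2}_T}$. Because $\eta_{_T}\equiv 1$ on $[-T,T]$ by construction, the function $\eta_{_T}v$ is itself an extension of $u|_{[-T,T]}$, so
$$\|u\|_{X^{0,b_1}_T}\leq\|\eta_{_T}v\|_{X^{0,b_1}}.$$
Sending $\delta\to 0$, the lemma reduces to the global cutoff estimate $\|\eta_{_T}v\|_{X^{0,b_1}}\les T^{b_2-b_1}\|v\|_{X^{0,b_2}}$.

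For the global estimate, I would use that the map $w\mapsto e^{-it\Delta}w$ is an isometry from $X^{0,b}$ onto $H^b_t L^2_x$, and that $\eta_{_T}$ depends only on $t$. Applying Plancherel in $x$, the problem collapses to a purely temporal estimate, uniform in the spatial frequency: for every $f\in H^{b_2}(\R)$,
$$\|\eta_{_T} f\|_{H^{b_1}_t(\R)}\les T^{b_2-b_1}\|f\|_{H^{b_2}_t(\R)}.$$
I would prove this by bilinear real interpolation in $(b_1,b_2)$ between two simple endpoints: at $b_1=b_2=0$ one has the trivial bound $\|\eta_{_T}f\|_{L^2_t}\leq\|\eta_{_T}\|_{L^\infty_t}\|f\|_{L^2_t}\les 1$, while at the corner $b_1=-\tfrac{1}{2}+\eps$, $b_2=\tfrac{1}{2}-\eps$ one uses duality, Sobolev embedding in time, and the scaling $\|\eta_{_T}\|_{L^p_t}\les T^{1/p}$ (immediate from the support properties, see~\eqref{decay}) to extract a gain of $T^{1-2\eps}$. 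Real interpolation then yields exactly $T^{b_2-b_1}$ on the entire admissible region $-\tfrac{1}{2}<b_1\leq b_2<\tfrac{1}{2}$.

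The principal technical point is verifying the second endpoint, where the mixed sign $b_1<0<b_2$ requires converting the $H^{-1/2+\eps}_t$ norm into a dual pairing against an $H^{1/2-\eps}_t$ function and then applying three-factor H\"older to absorb $\eta_{_T}$ in an intermediate $L^p_t$ space. As a more self-contained alternative that bypasses interpolation, I would instead apply Schur's test directly to the kernel
$$K(\tau,\tau')=T\,\ft{\eta}\bigl(T(\tau-\tau')\bigr)\,\jb{\tau}^{b_1}\jb{\tau'}^{-b_2}$$
of $\eta_{_T}$ viewed as an operator from $H^{b_2}_t$ to $H^{b_1}_t$ on the Fourier side; the rapid decay $|\ft{\eta}(\tau)|\les_N\jb{\tau}^{-N}$ combined with the strict inequalities $|b_1|,|b_2|<\tfrac{1}{2}$ guarantees that the Schur integrals $\sup_\tau\int|K|d\tau'$ and $\sup_{\tau'}\int|K|d\tau$ both converge and scale precisely as $T^{b_2-b_1}$. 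Either route completes the argument, after which taking the infimum over extensions $v$ and undoing the reduction $s=0$ finishes the proof.
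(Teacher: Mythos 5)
The paper does not prove this lemma; it is quoted directly from Tao's book (\cite[Lemma~2.11]{TAO}), so there is no internal proof to compare against, and the question is simply whether your argument is complete. Your reduction steps are correct and standard: normalizing $s=0$, choosing a near-optimal extension $v$, observing that $\eta_{_T} v$ is again an extension because $\eta_{_T}\equiv 1$ on $[-T,T]$, and then conjugating by $e^{-it\Delta}$ and applying Plancherel in $x$ to reduce everything to the scalar temporal estimate $\|\eta_{_T} f\|_{H^{b_1}_t}\lesssim T^{b_2-b_1}\|f\|_{H^{b_2}_t}$. The problem is in how you propose to close that scalar estimate: both routes, as written, have gaps.

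On the interpolation route: interpolating between the two stated endpoints $(b_1,b_2)=(0,0)$ and $(b_1,b_2)=(-\tfrac12+\eps,\tfrac12-\eps)$ only sweeps out the one-parameter segment $b_1=-b_2$, $0\le b_2<\tfrac12$; it does not reach the full triangle $\{-\tfrac12<b_1\le b_2<\tfrac12\}$. For instance, the pair $(b_1,b_2)=(0,\tfrac14)$ is not on that segment. Your two endpoint estimates themselves are fine (the $T^{1-2\eps}$ gain at the corner indeed follows from duality, three-factor H\"older, Sobolev embedding, and $\|\eta_{_T}\|_{L^p_t}\sim T^{1/p}$), but to cover the whole region you also need, for example, the uniform diagonal bound $\|\eta_{_T}\|_{H^b_t\to H^b_t}\lesssim 1$ for all $|b|<\tfrac12$ (provable by scaling plus a fractional Leibniz/paraproduct estimate), after which you can dualize to reduce to $b_1+b_2\le 0$ and then interpolate, at fixed $b_1$, between $(b_1,b_1)$ and $(b_1,-b_1)$. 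Without that extra ingredient the claim that ``real interpolation then yields exactly $T^{b_2-b_1}$ on the entire admissible region'' does not follow from the two stated endpoints.

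On the Schur route: the unweighted Schur test does not reproduce the sharp power, and the assertion that both Schur integrals ``scale precisely as $T^{b_2-b_1}$'' is incorrect. Take the concrete case $b_1=0$, $b_2=\tfrac14$, so that $K(\tau,\tau')=T\,\ft{\eta}(T(\tau-\tau'))\,\jb{\tau'}^{-1/4}$. Then $\sup_\tau\int|K(\tau,\tau')|\,d\tau'\sim T^{1/4}$, but $\sup_{\tau'}\int|K(\tau,\tau')|\,d\tau=\sup_{\tau'}\jb{\tau'}^{-1/4}\|\ft{\eta}\|_{L^1}\sim 1$, which does not shrink with $T$ at all. Schur's test thus gives only $\sqrt{T^{1/4}\cdot 1}=T^{1/8}$, which is weaker than the required $T^{1/4}$ for $T\ll 1$. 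The kernel is too asymmetric for the unweighted $L^1$ row/column Schur test to be sharp here; one would need either a weighted Schur test with a suitably tuned weight, or a dyadic decomposition in $\tau,\tau'$, to recover the full power. As written, the Schur alternative does not prove the lemma.
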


In the remaining part of this note, 
we fix a sufficiently small constant $\eps > 0$
and set 
\begin{align}
b = \frac 12 + \eps\qquad \text{and}\qquad 
b' = \frac 12 - 2\eps
\label{b1}
\end{align}

\noi
(unless otherwise specified).
With this notation, we state a slightly improved
trilinear estimate.
Given $u_j$ on $\T^2 \times \R$, $j = 1, 2, 3$, 
suppose that there exists $j_* \in \{1, 2, 3\}$ such that, 
given any $t \in \R$,  
we have 
\[ |n_{j_*}| \ges \max_{j \ne j_*}|n_j|\]

\noi
for any $n_k \in \supp \ft u_k(\cdot, t)$,  $k = 1, 2, 3$.
Then, by a slight modification of the proof of 
the trilinear estimate \eqref{tri1}, we have 
\begin{align}
\| \NN(u_1, u_2, u_3)\|_{X^{s, - b' + 2\eps_1 }} \les 
\|u_{j_*}\|_{X^{s, b-\eps_1}}
 \prod_{\substack{j = 1 \\j \ne j_*}}^3 \| u_j\|_{X^{\eps_2, b - \eps_1}}
\label{tri3}
\end{align}

\noi
for any    $s > 0$  and $ \eps_2 > 0$, 
provided that $\eps_1 \ge 0$ is sufficiently small.

\medskip

Lastly, we recall the following elementary calculus
lemma; see, for example,  \cite[Lemma~4.2]{GTV}.

\begin{lemma}
\label{LEM:conv}

Let  $\al, \be \in \R$ satisfy
\begin{align*}
 \al \ge \be \ge 0 \qquad \text{and}\qquad   \al+ \be > 1.
% \label{SUM1}
\end{align*}

\noi
Then, we have
\begin{align*}
\int_\R\frac{dx}{\jb{x}^\al \jb{x-a}^\be}
& \les \frac 1{\jb{a}^{ \be - \ld}},\\ 
 \sum_{n \in \Z} \frac{1}{\jb{n}^\al \jb{n - a}^\be}
& \les \frac 1{\jb{a}^{ \be - \ld}}
\end{align*}

\noi
for any $a \in \R$,
where $\ld = 
\max( 1- \al, 0)$ when $\al\ne 1$ and $\ld = \eps$ when $\al = 1$ for any $\eps > 0$.
%In particular, if \eqref{SUM1} holds, then we have 
%\[
%\sup_{\xi  \in \R} \int_{\xi = \xi_1 + \xi_2}\frac{d\xi_1}{\jb{\xi_1}^\al \jb{\xi_2}^\be}
%< \infty. \]

\end{lemma}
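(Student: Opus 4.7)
The plan is to prove the integral bound first and then deduce the summation bound either by a parallel argument or by comparison with an integral. Since the estimate is symmetric under $x \mapsto -x$, we may assume $a \ge 0$, and since the right-hand side is $O(1)$ for $|a| \les 1$ while the left-hand side is finite (as $\al + \be > 1$), we may further assume $|a| \gg 1$. I would then split $\R$ into three regions according to the size of $x$ relative to $a$:
\begin{align*}
\text{I:}\ |x| \le \tfrac{\jb{a}}{2}, \qquad
\text{II:}\ |x - a| \le \tfrac{\jb{a}}{2}, \qquad
\text{III:}\ |x|,\,|x-a| \ge \tfrac{\jb{a}}{2}.
\end{align*}

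On Region I we have $\jb{x-a} \sim \jb{a}$, so the contribution is bounded by $\jb{a}^{-\be} \int_{|x|\le \jb{a}/2} \jb{x}^{-\al}\,dx$. The last integral is $O(1)$ when $\al > 1$, $O(\log\jb{a}) = O(\jb{a}^\eps)$ when $\al = 1$, and $O(\jb{a}^{1-\al})$ when $\al < 1$, which in all three cases gives exactly $\jb{a}^{-(\be - \ld)}$ for $\ld$ as stated. On Region II we instead factor out $\jb{a}^{-\al}$ and integrate $\jb{x-a}^{-\be}$; since $\al \ge \be \ge 0$ the result is majorized by $\jb{a}^{-\al}\jb{a}^{\max(1-\be,0)+\eps}$, which using $\al \ge \be$ is at most a constant times the Region I bound (the logarithmic loss when $\be = 1$ is absorbed by $\al \ge 1$). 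On Region III, I would further split into $|x| \sim \jb{a}$ (where volume $\les \jb{a}$ and the integrand is $\les \jb{a}^{-\al-\be}$) and $|x| \ge 2\jb{a}$ (where $\jb{x-a} \sim \jb{x}$ and the integrand is $\les \jb{x}^{-\al-\be}$, integrable since $\al + \be > 1$); both yield a bound of $\jb{a}^{1-\al-\be}$, which is $\le \jb{a}^{-(\be - \ld)}$ because $\ld \ge 1 - \al$.

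For the sum, I would run the same three-region decomposition with $\int dx$ replaced by $\sum_{n\in \Z}$; all the component estimates are standard $p$-series bounds, and comparison with the integral $\int \jb{x}^{-\g}\,dx$ over $|x| \le R$ gives the same case split at $\g = 1$. Alternatively, the sum bound follows from the integral bound by noting that $\jb{n}$ and $\jb{n-a}$ are essentially constant on intervals of length $1$, so the sum is comparable to the integral up to a factor of order one.

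The only delicate point is the endpoint case $\al = 1$, which produces the logarithmic factor and hence the $\eps$-loss in the definition of $\ld$; everything else is a straightforward case analysis, and the decomposition is clean because the hypothesis $\al \ge \be \ge 0$ forces Region II to be no worse than Region I. I do not anticipate any serious obstacle beyond bookkeeping.
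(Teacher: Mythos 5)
Your proof is correct and follows the standard approach. Note that the paper does not actually include a proof of this lemma; it cites \cite[Lemma~4.2]{GTV} and leaves the argument to that reference, which uses the same elementary three-region decomposition you propose (split at $|x|\le\jb{a}/2$, $|x-a|\le\jb{a}/2$, and the complement, using $\jb{x-a}\sim\jb{a}$ in the first region, $\jb{x}\sim\jb{a}$ in the second, and $\jb{x}\sim\jb{x-a}$ at infinity). One small imprecision: in your Region II bound you wrote $\jb{a}^{-\al}\jb{a}^{\max(1-\be,0)+\eps}$ with a blanket $+\eps$, but the $\eps$-loss should appear only at the endpoint $\be=1$; with the blanket $+\eps$ your bound would not literally be dominated by the Region I bound when $\al=\be>1$. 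The correct statement is that the $\jb{x-a}^{-\be}$ integral over Region II costs $\jb{a}^{\ld_\be}$ with $\ld_\be$ defined exactly as your $\ld$ but with $\be$ in place of $\al$; then the inequality $-\al+\ld_\be\le-\be+\ld_\al$ is an easy case check using $\al\ge\be$ (with the $\al>1=\be$ case absorbing the logarithm as you indicated). Your Region I and Region III estimates, your reduction to $|a|\gg 1$, and your passage from the integral to the sum are all fine.
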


\subsection{Tools from stochastic analysis}

We first recall a basic bound
on Gaussian random variables;
see \cite[Lemma 3.4]{CO}
for a proof.

\begin{lemma}
\label{LEM:prob1} 

Let $\left\{g_n\right\}_{n\in \Z^d}$ be a sequence of independent standard complex-valued Gaussian
random variables. 
Given any $\dl_1, \dl_2 > 0$, there
exist $c_1, c_2 > 0$ such that 
\begin{equation*}
|g_n(\omega)| \les T^{-\dl_1} \jb{n}^{\dl_2}
\end{equation*}
	
\noi 
for any $n \in \Z^d$ and $0 < T \ll 1$, 
outside an exceptional set of probability $< e^{-\frac{c_1}{T^{c_2}}}$. 
\end{lemma}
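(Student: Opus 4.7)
The plan is to combine the standard Gaussian tail bound for each individual $g_n$ with a union bound over $n \in \Z^d$, exploiting the polynomial weight $\jb{n}^{\dl_2}$ (with $\dl_2 > 0$) to absorb the lattice volume growth. Recall that for a standard complex-valued Gaussian $g$, there exists $c_0 > 0$ such that $\PP(|g| > \lambda) \le e^{-c_0 \lambda^2}$ for any $\lambda \ge 0$. Setting
\[
\lambda_n := T^{-\dl_1}\jb{n}^{\dl_2},
\]
the tail bound gives $\PP(|g_n(\o)| > \lambda_n) \le e^{-c_0 T^{-2\dl_1}\jb{n}^{2\dl_2}}$ for each $n \in \Z^d$.

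Next, I would exploit the splitting $T^{-2\dl_1}\jb{n}^{2\dl_2} \ge \tfrac12 T^{-2\dl_1} + \tfrac12 \jb{n}^{2\dl_2}$, which is valid for $0 < T \ll 1$ and every $n$ since each of $T^{-2\dl_1}$ and $\jb{n}^{2\dl_2}$ is $\ge 1$. Applying a union bound over $n \in \Z^d$ then yields
\[
\PP\Big(\exists\, n \in \Z^d :\, |g_n(\o)| > \lambda_n\Big)
\le e^{-\frac{c_0}{2} T^{-2\dl_1}} \sum_{n \in \Z^d} e^{-\frac{c_0}{2}\jb{n}^{2\dl_2}}
\le C_{\dl_2}\, e^{-\frac{c_0}{2} T^{-2\dl_1}},
\]
where the convergence of the latter sum is immediate from $\dl_2 > 0$. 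For $T > 0$ sufficiently small (depending only on $\dl_1, \dl_2$), the constant $C_{\dl_2}$ is absorbed by a slight reduction of the exponent, producing an exceptional set of probability at most $e^{-c_1/T^{c_2}}$ with, for instance, $c_1 = c_0/4$ and $c_2 = 2\dl_1$. On the complement of this exceptional set, the pointwise bound $|g_n(\o)| \le T^{-\dl_1}\jb{n}^{\dl_2}$ holds simultaneously for every $n \in \Z^d$, which is precisely the claim.

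I do not anticipate any genuine obstacle here: the only quantitative input is that the super-polynomial Gaussian tail comfortably absorbs the lattice sum over $\Z^d$, which is possible precisely because the threshold $\lambda_n$ carries the polynomial weight $\jb{n}^{\dl_2}$ with $\dl_2 > 0$. The role of $T$ is decoupled from that of $n$ in the splitting above, which is what enables the $T$-dependence in the final probability estimate to take the clean form $e^{-c_1/T^{c_2}}$.
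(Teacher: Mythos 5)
Your proof is correct and takes essentially the same route as the argument the paper points to (cited as \cite[Lemma 3.4]{CO}): a pointwise Gaussian tail bound followed by a union bound over $\Z^d$, with the polynomial weight $\jb{n}^{\dl_2}$ absorbing the lattice count and the splitting of the exponent isolating the $T$-dependence. No gap.
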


Next, we recall  the Wiener chaos estimate
(\cite[Theorem~I.22]{Simon});
see \cite[Lemma 3.2]{OTz1}
for the following particular version.

\begin{lemma}\label{LEM:hyp}
Let $\left\{g_n\right\}_{n\in \Z^d}$ be a sequence of independent standard real-valued Gaussian
random variables. Given $k\in \N$, let $\left\{P_j\right\}_{j\in \N}$ be a sequence of polynomials in 
$\bar{g}=\left\{g_n\right\}_{n\in \Z^d}$ of degree at most $k$. Then, for any finite $p\ge 1$, we have
\begin{align*}
\bigg\|\sum_{j\in \N}P_j(\bar{g})\bigg\|_{L^p(\Omega)}
\le (p-1)^{\frac{k}{2}}\bigg\|\sum_{j\in \N}P_j(\bar{g})\bigg\|_{L^2(\Omega)}.
\end{align*}
\end{lemma}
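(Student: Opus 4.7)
The plan is to deduce the claimed bound from Nelson's hypercontractivity inequality for the Ornstein--Uhlenbeck semigroup on Gauss space, via a standard reduction using the Wiener--It\^o chaos decomposition. Set $F := \sum_{j\in\N} P_j(\bar g)$, and observe that since each $P_j$ is a polynomial in $\bar g$ of degree at most $k$, the (possibly infinite) sum $F$ lies in the direct sum $\bigoplus_{n=0}^k \Hf_n$ of the first $k+1$ homogeneous Wiener chaoses generated by $\{g_n\}_{n\in \Z^d}$. I then write the chaos decomposition $F = \sum_{n=0}^{k} F_n$ with $F_n \in \Hf_n$, noting that the components $F_n$ are pairwise orthogonal in $L^2(\Omega)$.

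Next, I recall that the Ornstein--Uhlenbeck semigroup $\{T_t\}_{t \ge 0}$ acts diagonally on the chaos decomposition by $T_t F_n = e^{-nt} F_n$, and that Nelson's hypercontractivity theorem asserts that $T_t : L^2(\Omega) \to L^p(\Omega)$ is a contraction whenever $(p-1) e^{-2t} \le 1$. For $p \ge 2$, setting $t_p := \tfrac 12 \log(p-1)$ therefore yields $\|T_{t_p} H\|_{L^p(\Omega)} \le \|H\|_{L^2(\Omega)}$ for every $H \in L^2(\Omega)$. This is the one substantive input I need; the remaining reduction is purely algebraic.

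The key trick is now to apply this estimate to the auxiliary variable
\begin{equation*}
G := \sum_{n=0}^{k} (p-1)^{n/2} F_n,
\end{equation*}
which is designed so that $T_{t_p} G = \sum_{n=0}^{k} (p-1)^{n/2} e^{-n t_p} F_n = \sum_{n=0}^{k} F_n = F$ by construction. Nelson's hypercontractivity then gives
\begin{equation*}
\|F\|_{L^p(\Omega)} = \|T_{t_p} G\|_{L^p(\Omega)} \le \|G\|_{L^2(\Omega)},
\end{equation*}
and the $L^2$-orthogonality of the chaos components yields
\begin{equation*}
\|G\|_{L^2(\Omega)}^2 = \sum_{n=0}^{k} (p-1)^{n} \|F_n\|_{L^2(\Omega)}^2 \le (p-1)^k \sum_{n=0}^{k} \|F_n\|_{L^2(\Omega)}^2 = (p-1)^k \|F\|_{L^2(\Omega)}^2,
\end{equation*}
which combined with the previous display establishes the stated inequality for $p \ge 2$. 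The range $1 \le p \le 2$, which is the regime where the constant $(p-1)^{k/2}$ is $\le 1$, is subsumed by the $p = 2$ case together with the monotonicity $\|F\|_{L^p(\Omega)} \le \|F\|_{L^2(\Omega)}$ on the probability space $\Omega$.

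There is no real obstacle beyond citing Nelson's theorem; the only minor care point is to ensure that $F = \sum_j P_j(\bar g)$ is a genuine element of $L^2(\Omega)$ so that the Wiener--It\^o decomposition is available in the first place. This is implicit in the statement, since otherwise both sides of the claimed inequality are infinite.
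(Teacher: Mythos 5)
Your argument is, in substance, the proof behind the paper's citation: the paper does not prove this lemma itself but refers to \cite[Theorem I.22]{Simon} and \cite[Lemma 3.2]{OTz1}, where the estimate is obtained exactly as you do, via the Wiener chaos decomposition, the diagonal action of the Ornstein--Uhlenbeck semigroup on the chaoses, and Nelson's $L^2\to L^p$ hypercontractivity with $e^{-2t}=(p-1)^{-1}$. For $p\ge 2$ your reduction (the auxiliary element $G=\sum_{n=0}^k(p-1)^{n/2}F_n$, the identity $T_{t_p}G=F$, and the orthogonality computation $\|G\|_{L^2}^2\le (p-1)^k\|F\|_{L^2}^2$) is complete and correct, granted the standing assumption $F\in L^2(\Omega)$ which you address.

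The only flaw is the closing remark about $1\le p<2$. Monotonicity of moments on a probability space gives $\|F\|_{L^p(\Omega)}\le\|F\|_{L^2(\Omega)}$, which is \emph{weaker} than the asserted bound in that range, since there $(p-1)^{k/2}<1$; so this case is not ``subsumed'' by $p=2$. In fact the inequality as literally stated fails for $1\le p<2$: take $k=1$, $F=g_0$, $p=1$, for which the right-hand side vanishes while $\|g_0\|_{L^1(\Omega)}>0$. This reflects an imprecision in the statement rather than in your main argument --- the standard formulations (and all uses in this paper) take $p\ge 2$, or equivalently replace $(p-1)^{k/2}$ by $\max(p-1,1)^{k/2}$ --- but your justification for the sub-$L^2$ range is a non sequitur and should either be dropped or replaced by the observation that the estimate is only needed for $p\ge 2$.
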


\subsection{Random tensor estimate}

In this section, we provide the basic definition of tensors and 
state the random tensor estimate (Lemma \ref{LEM:RT}).
See~\cite[Sections 2 and 4]{DNY3}, \cite[Section 4]{Bring1}, 
and \cite[Appendix C]{OWZ}
for further discussion.

\begin{definition} \label{DEF:tensor} \rm
Let $A$ be a finite index set. We denote by $n_A$ the tuple $ \{n_j \}_{j \in A}$. 
 A tensor $h = h_{n_A}$ is a function: $(\Z^d)^{A} \to \mathbb{C} $ with the input variables $n_A$. Note that the tensor $h$ may also depend on $\o \in \O$. 
 The support of a tensor $h$ is the set of $n_A$ such that $h_{n_A} \neq 0$. 

Given a finite index set  $A$, 
let $(B, C)$ be a partition of $A$. We define the norms 
 $\| \cdot \|_{n_A}$ and 
$\| \cdot \|_{n_{B} \to n_{C}}$ by 
\[ \| h \|_{n_A}  = \|h\|_{\l^2_{n_A}} = \bigg(\sum_{n_A} |h_{n_A}|^2\bigg)^\frac{1}{2}\]
and
\begin{align}
  \| h \|^2_{n_{B} \to n_{C}} = \sup \bigg\{ 
\sum_{n_{C}} \Big| \sum_{n_{B}} h_{n_A} f^\textup{in}_{n_{B}} \Big|^2 :  \| f^\textup{in} \|_{\l^2_{n_{B}}} =1  \bigg\},  
\label{Z0a}
\end{align}

\noi
where  we used the short-hand notation $\sum_{n_Z} = \sum_{n_Z \in (\Z^d)^Z}$ for a finite index set $Z$.
Note that, by duality, we have  $\| h \|_{n_{B} \to n_{C}} = \| h \|_{n_{C} \to n_{B}} 
= \| \cj h \|_{n_{B} \to n_{C}}$ for any tensor $h = h_{n_A}$. 
If $B = \varnothing$ or $C = \varnothing$,  then we have
$  \| h \|_{n_{B} \to n_{C}} = \| h \|_{n_A}$.
\end{definition}

Let $(B, C)$ be a partition of $A$.
Then, 
by duality, we can write \eqref{Z0a} as 
\begin{align}
  \| h \|_{n_{B} \to n_{C}} = \sup \bigg\{ 
\Big|  \sum_{n_{B}, n_C} h_{n_A} f^\textup{in}_{n_{B}} f^\textup{out}_{n_C}\Big| : 
\| f^\textup{in} \|_{\l^2_{n_{B}}} =  \| f^\textup{out} \|_{\l^2_{n_{C}}} =1   \bigg\}.
\label{Z0x}
\end{align}
%
%\noi
%from which we obtain
%\begin{align}
%\sup_{n_A}|h_{n_A}|
%= \sup_{n_B, n_C}|h_{n_Bn_C}|
%\le   \| h \|_{n_{B} \to n_{C}}.
%\label{Z0b}
%\end{align}

For example, when $A = \{1, 2\}$, 
the norm  $\| h \|_{n_{1} \to n_{2}}$ denotes the usual operator norm
$\| h \|_{\l^2_{n_{1}} \to \l^2_{n_{2}}}$
for an infinite-dimensional matrix operator $\{h_{n_1 n_2}\}_{n_1, n_2 \in \Z^d}$.
By bounding the matrix operator norm by the Hilbert-Schmidt norm (= the Frobenius norm), we have
\begin{align}
\| h \|_{\l^2_{n_{1}} \to \l^2_{n_{2}}} \le \| h\|_{\l^2_{n_1, n_2}}.
\label{Z0}
\end{align}

With the above notation, we have the following random tensor estimate,
originally introduced  in \cite[Propositions 4.14 and 4.15]{DNY3}; see also  \cite[Proposition 4.50]{Bring1} and \cite[Lemma C.3]{OWZ} for similar formulations as ours.

\begin{lemma}\label{LEM:RT}
Let $A$ be a finite index set with $k = |A| \ge 1$.
Given a \textup{(}deterministic\textup{)} tensor
 $h_{bcn_A} \in \l^2_{bcn_A}$
 such that $n_a \in \Z^d$ for each $a \in A$ and $(b,c) \in (\Z^d)^q$ for some integer $q \geq 2$, define 
a random tensor $H = H_{bc}$ by
\begin{align}
H_{bc} =  \sum_{n_A} h_{bcn_A} 
\prod_{n \in \Z^d} 
\Hf_{k_n}(g_{n}), 
\label{RTX0}
\end{align}

\noi
where $\{g_n\}_{n \in \Z^d}$ is a family of independent
standard real-valued Gaussian random variables
and 
$\Hf_{k_a}$ denotes the Hermite polynomial of degree\footnote{Since $A$ is 
a finite set, we have $k_n = 0$ except for finitely many $n$'s.
Recalling that $\Hf_0(x) = 1$, we see that the product in \eqref{RTX0}
is indeed over a finite set $\{n \in \Z^d:\,\text{given $n_A \in (\Z^d)^A$, there exists } a \in A \text{ such that }n_a = n \}$.}
\begin{align}
k_n = \#\{ a\in A: n_a = n\}
\label{RTX0a}
\end{align}
such that $\sum_{n \in \Z^d} k_n = k$.

\medskip

\noi
\textup{(i)}
Given $N \in \N$, suppose that 
\begin{align*}
\supp h \subset \big\{ |b - b^*|, |c - c^*|, |n_a| \les N
\text{ for each $a \in A$}
 \big\}
\end{align*} 

\noi
for some fixed $(b^*, c^*) \in (\Z^d)^q$.
Then, 
 for any $\theta > 0$ and finite $p \ge 1$, we have
\begin{align*}
\big\| \| H_{bc} \|_{b \to c} \big\|_{L^p(\O)} 
\les p^{\frac k2} N^{\theta}  \max_{(B,C)} \| h \|_{b n_B \to c n_C}, 
%\label{RTX0c}
\end{align*}

\noi
where the maximum is taken over  all partitions $(B,C)$ of $A$
and the implicit constant is independent of $N \in \N$
and $(b^*, c^*) \in (\Z^d)^q$.

\medskip

\noi
\textup{(ii)}
Write $b = (b_0, b_1) \in \Z^d \times (\Z^d)^{q_1}$
and
$c = (c_0, c_1) \in \Z^d \times (\Z^d)^{q_2}$
with $q_1 + q_2 + 2 = q$.
Given $N \in \N$, suppose that

\smallskip
\begin{enumerate}
\item[(ii.a)]
$\supp h \subset \big\{ |b_1 - b^*|, |c_1 - c^*|, |n_a| \les N
\text{ for each $a \in A$}\big\}$
for some fixed $(b^*, c^*) \in (\Z^d)^{q_1 + q_2}$,

\smallskip

\item[(ii.b)]
 $|b_0-c_0| \le C_1 N$ and $\big||b_0|^2 -|c_0|^2\big| \les N^{a_1}$ for some $C_1, a_1 \ge 2$, 

\smallskip
\item[(ii.c)] the tensor $h_{bcn_A}
= h_{(b_0, b_1), (c_0, c_1), n_A}$  depends only on $b_0-c_0$, $|b_0|^2 -  |c_0|^2$, 
$b_1$, $c_1$, and $n_A$. 
\end{enumerate}

\noi 

\smallskip

\noi
Given $m \in \Z^d$, define a tensor $h^m_{(b_0', b_1), (c_0', c_1), n_A} $ 
by 
\begin{align}
h^m_{(b_0', b_1), (c_0', c_1), n_A} 
= h_{(b_0'+m, b_1),(c_0'+m, c_1), n_A} \cdot \ind_{|b'_0|, |c_0'| \le C_1 N}.
\label{XY2}
\end{align}

\noi
Then,  
 for any $\theta > 0$ and finite $p \ge 1$, we have
\begin{align}
\big\| \| H_{bc} \|_{b \to c} \big\|_{L^p(\O)} 
\les p^{\frac k2} N^{\theta}
\sup_{m \in \Z^d}
 \max_{(B,C)} \| h^m_{(b_0', b_1), (c_0', c_1), n_A}  \|_{(b_0', b_1), n_B \to (c_0', c_1),  n_C}, 
\label{RTX0c}
\end{align}

\noi
where 
the maximum is taken over  all partitions $(B,C)$ of $A$
and 
the implicit constant is independent of $N \in \N$
and $(b^*, c^*) \in (\Z^d)^{q_1 + q_2}$.

\medskip

\noi
\textup{(iii)}
Let $h$ be as in Part (ii), and set 
\begin{align*}
\wt h_{(b_0, b_1), (c_0, c_1), n_A} 
= h_{(b_0, b_1), (c_0, c_1), n_A} 
\cdot 
 \ind_{|b_0|, |c_0| \ge 10C_1 N}, 
%\label{XY1}
\end{align*}

\noi
where $C_1 \ge 1$ is as in 
(ii.b) above, 
and 
 define  a random  tensor $\wt H = \wt H_{bc}$ by
\begin{align*}
\wt H_{bc} =  \sum_{n_A} \wt h_{bcn_A} 
\prod_{n \in \Z^d} 
\Hf_{k_n}(g_{n}), 
\end{align*}

\noi
where
$\{g_n\}_{n \in \Z^d}$ and $k_n$ are as in \eqref{RTX0}.
Given $m \in \Z^d$, 
define a tensor $\wt h^m_{(b'_0, b_1), (c_0', c_1), n_A} $ by 
\begin{align}
\begin{split}
\wt h^m_{(b'_0, b_1), (c_0', c_1), n_A} 
& = \wt h_{(b'_0+m, b_1),(c_0'+m, c_1), n_A} \cdot \ind_{|b'_0|, |c'_0| \le C_1 N}\\
& =  h_{(b'_0+m, b_1),(c_0'+m, c_1), n_A} 
\cdot  \ind_{|b_0'+m|, |c_0'+m| \ge 10C_1 N} 
\cdot \ind_{|b'_0|, |c'_0| \le C_1 N}.
\end{split}
\label{XY4}
\end{align}

\noi
Then, 
 for any $\theta > 0$ and finite $p \ge 1$, we have
\begin{align}
\big\| \| \wt H_{bc} \|_{b \to c} \big\|_{L^p(\O)} 
\les p^{\frac k2} N^{\theta}
\sup_{m \in \Z^d}
 \max_{(B,C)} \|
\wt h^m_{(b'_0, b_1), (c_0', c_1), n_A} \|_{(b_0', b_1), n_B \to (c_0', c_1),  n_C}, 
\label{XY1a}
\end{align}

\noi
where the maximum is taken over  all partitions $(B,C)$ of $A$
and the implicit constant is independent of $N \in \N$
and $(b^*, c^*) \in (\Z^d)^{q_1 + q_2}$.

\end{lemma}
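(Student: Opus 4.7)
The plan is to prove Part (i) via the moment method combined with hypercontractivity and a combinatorial enumeration of Wick contractions, then deduce Parts (ii) and (iii) from Part (i) by a translation-and-covering argument that exploits the quasi-diagonality $|b_0 - c_0| \lesssim N$.

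For Part (i), I would first reduce the operator-norm estimate to a trace estimate via the inequality
\[
\|H\|_{b \to c}^{2p_0} \le \mathrm{tr}\bigl((H H^*)^{p_0}\bigr)
\]
for any integer $p_0 \ge 1$. Taking expectation, the trace expands as a sum of products of $2p_0$ tensor entries, each carrying a Hermite product $\prod_n \Hf_{k_n}(g_n)$. By Wick's formula (equivalently Lemma \ref{LEM:hyp} for order-$k$ Wiener chaos), orthogonality forces the $n_A$-indices across the $2p_0$ copies to be contracted, and the resulting combinatorics corresponds to a partition of $A$ together with a bipartite matching of the $2p_0$ factors. Repeated Cauchy--Schwarz then bounds each contracted expression by a product of partition norms $\|h\|_{b n_B \to c n_C}$ over partitions $(B,C)$ of $A$; the number of such diagrams is at most $(C p_0)^{k p_0}$, which after optimisation in $p_0$ yields the sharp $p^{k/2}$ factor. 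The $N^\theta$ loss comes from an $\eps$-net argument on the discrete variables $b, c$ (each localized in a cube of radius $O(N)$), together with Lemma \ref{LEM:prob1} to handle Gaussian tails uniformly.

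For Part (ii), the hypotheses (ii.a)--(ii.c) encode a translation-type structure in $(b_0, c_0)$: the tensor depends only on $b_0 - c_0$ and $|b_0|^2 - |c_0|^2$ (together with the transverse variables $b_1, c_1, n_A$) and is supported in the quasi-diagonal region $|b_0 - c_0| \le C_1 N$. Introducing the translated tensors $h^m$ via \eqref{XY2}, one sees that for each $m \in \Z^d$ the support of $h^m$ fits in a cube $|b_0'|, |c_0'| \le C_1 N$, so the random operator $H$ decomposes (up to $O(1)$-overlaps) as a direct sum over $m$ of translated operators $H^m$ built from $h^m$ via \eqref{RTX0}. This gives the pointwise bound $\|H\|_{b \to c} \lesssim \sup_m \|H^m\|_{(b_0',b_1) \to (c_0',c_1)}$. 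Applying Part (i) to each $H^m$ (now supported in a cube of size $O(N)$) and exchanging the $\sup_m$ with the $L^p$-norm at the cost of an extra $N^\theta$ factor — via a union bound, using that the restriction $\bigl||b_0|^2 - |c_0|^2\bigr| \lesssim N^{a_1}$ localizes the effective range of $m$ to a polynomial-in-$N$ number of cubes — yields \eqref{RTX0c}. Part (iii) is then an immediate adaptation of Part (ii) to the truncated tensor $\wt h$: the high-frequency cutoff $\ind_{|b_0|,|c_0| \ge 10 C_1 N}$ is preserved by the translation and absorbed into \eqref{XY4}, and the same covering scheme delivers \eqref{XY1a}.

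\textbf{Main obstacle.} The substantive difficulty is the combinatorial enumeration in Part (i): one must verify that the Wick contractions produce precisely the partition norms $\|h\|_{b n_B \to c n_C}$ with the indices $b$ and $c$ remaining on their respective sides (reflecting the fact that only the $n_A$-indices are contracted through the Gaussians), and one must sharply track the combinatorial count so as to obtain the claimed $p^{k/2}$ dependence. In Parts (ii)--(iii), the main technical point is the union bound over $m$: the hypothesis $\bigl||b_0|^2 - |c_0|^2\bigr| \lesssim N^{a_1}$ is essential for keeping the effective range of $m$ polynomial, so that the $\sup_m$/$L^p$ exchange costs only $N^\theta$ rather than something exponential.
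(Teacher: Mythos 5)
Your outline of Part (i) — moment method plus Wick contraction combinatorics and a localization argument — is reasonable and aligns with how the estimate is proved in the literature, though the paper itself does not reprove Part (i); it simply cites \cite[Lemma C.3 and Remark C.4 (ii)]{OWZ} and \cite[Propositions 4.14 and 4.15]{DNY3} and only supplies proofs of Parts (ii) and (iii). For those parts your high-level strategy — translate by $m$, restrict to a cube $|b_0'|,|c_0'|\le C_1 N$, apply Part (i), then exchange $\sup_m$ with the $L^p$-norm — matches the paper's.

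However, there is a genuine gap in how you justify the $\sup_m$/$L^p$ exchange. You claim that the hypothesis $\big||b_0|^2-|c_0|^2\big|\lesssim N^{a_1}$ ``localizes the effective range of $m$ to a polynomial-in-$N$ number of cubes.'' This is not true: the support of $h$ in $(b_0,c_0)$ is unbounded (only $b_0-c_0$ and $|b_0|^2-|c_0|^2$ are controlled, not $b_0$, $c_0$ themselves), so the translation parameter $m$ must genuinely range over all of $\Z^d$ and a union bound over $m$ in a polynomial region does not cover the operator. The paper's mechanism is different and is precisely where the work lies: by (ii.c), the tensor $h^m$ depends on $m$ only through $2m\cdot(b_0'-c_0')$ on the support $|b_0'-c_0'|\lesssim N$, $|m\cdot(b_0'-c_0')|\lesssim N^{a_1}$, and the compactness Lemma~\ref{LEM:DH} (proved via LLL-reduced lattice bases and the auxiliary Lemma~\ref{LEM:Y1}) shows that the collection of functions $n\mapsto m\cdot n$ on these shrinking domains takes only $O(N^{a_2})$ distinct values. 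Thus, while $m$ ranges over an infinite set, only $J=O(N^{a_2})$ distinct operators $H^m$ actually occur, and the paper replaces $\sup_m$ by a supremum over this finite representative set $\{m_j\}_{j=1}^J$ before applying the $\ell^p\hookrightarrow\ell^\infty$ step. Without Lemma~\ref{LEM:DH}, the step you describe simply fails. Additionally, in Part (iii) the paper must split into $|m|<20C_1 N$ (handled directly via Part (i)) and $|m|\ge 20C_1 N$ (where the truncation $\ind_{|b_0'+m|,|c_0'+m|\ge 10C_1 N}$ is vacuous and the Part (ii) argument applies); your ``immediate adaptation'' omits this dichotomy, which is what makes it legitimate to keep the representatives $m_j$ outside a ball of radius $20 C_1 N$.
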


In \cite{OWZ}, 
the random tensor estimate 
was 
extended to multiple stochastic integrals (with respect to white-in-time noises).
In \cite{CLO, COZ}, 
the random tensor estimate
was further extended 
for multiple stochastic integrals with respect to fractional-in-time noises.
These extensions have played a crucial 
role
in establishing pathwise well-posedness
of stochastic dispersive PDEs with multiplicative noises;
see also \cite{CGLLO}.

See 
\cite[Lemma C.3 and Remark C.4 (ii)]{OWZ}
for a proof of Lemma \ref{LEM:RT}\,(i). 
When $b = b_0$ and $c = c_0$, 
Lemma~\ref{LEM:RT}\,(ii) is essentially proven in \cite[Proposition 4.15]{DNY3}.
While the general case follows from a straightforward modification, 
we include its proof for readers' convenience.
For this purpose, we need the following
compactness lemma.
See also 
\cite[Claim~5.2]{DNY2},
 \cite[Claim~3.7]{DH1}, and \cite[Claim~4.16]{DNY3}.

\begin{lemma} \label{LEM:DH}

Let $a_1 \ge 1 $.
Given $N \gg 1$, 
let $\{f_m\}_{m \in \Z^d}$ be the family of functions defined by
\begin{align*}
f_m (n) = m \cdot n
\end{align*}

\noi
with the domain given by 
\begin{align}\label{EJ1x}
D_m = \big\{n \in \Z^d: |n| \les N \textup{ and } |m \cdot n| \les N^{a_1}\big\}.
\end{align}
Then, there exists $a_2 \gg 1$, independent of $N$,  and $J = J (a_1,a_2,N) \subset \Z^d$ 
with  $|J| = O(N^{a_2})$
such that, 
given  any $m \in \Z^d$,
there exists $m_0 \in J$ such that 
\begin{align}
f_{m_0} = f_m\qquad \text{and}\qquad  D_{m_0} = D_m.
\label{EJ2y}
\end{align}
\end{lemma}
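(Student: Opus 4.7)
The plan is to define a natural equivalence relation $m \sim m'$ on $\Z^d$, namely $D_m = D_{m'}$ together with $m \cdot n = m' \cdot n$ for all $n \in D_m$, show that the number of equivalence classes is polynomially bounded in $N$, and let $J$ consist of one representative from each equivalence class. The two conditions in \eqref{EJ2y} are precisely $m_0 \sim m$, so this yields a set $J$ with $|J| = O(N^{a_2})$ for some $a_2 \gg 1$.

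The key reduction is to encode the equivalence class of $m$ by the pair $(V_m, \phi_m)$, where $V_m \defe \mathrm{span}_{\R}(D_m) \subseteq \R^d$ and $\phi_m$ is the restriction of the linear form $n \mapsto m \cdot n$ to $V_m \cap \Z^d$. Since $D_m \subset V_m$ by definition, one has
\begin{equation*}
D_m = \big\{ n \in V_m \cap \Z^d : |n| \les N,\ |\phi_m(n)| \les N^{a_1} \big\},
\end{equation*}
so $(V_m, \phi_m)$ determines both $D_m$ and $f_m|_{D_m}$. Conversely, if $m \sim m'$, then $V_m = \mathrm{span}_\R(D_m) = \mathrm{span}_\R(D_{m'}) = V_{m'}$, and $\phi_m = \phi_{m'}$ because $m, m'$ agree on $D_m$ which spans $V_m$. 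Hence the equivalence classes are in bijection with the set of realizable pairs $(V, \phi)$.

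To bound the number of such pairs, observe that $V_m$ is the real span of at most $d$ linearly independent vectors from $D_m \subset \Z^d \cap \{|n| \les N\}$, a set of cardinality $O(N^d)$, giving at most $O(N^{d^2})$ candidate subspaces $V$. For each fixed $V$ of dimension $k \le d$, the linear form $\phi$ is determined by its values on any real basis $(v_1, \ldots, v_k)$ drawn from $D_m$; since each $m \cdot v_i \in \Z$ satisfies $|m \cdot v_i| \les N^{a_1}$, there are at most $O(N^{k a_1}) \le O(N^{d a_1})$ choices for $\phi$. Multiplying, the total number of equivalence classes is $O(N^{d^2 + d a_1})$, so setting $a_2 \defe d^2 + d a_1 + 1$ and letting $J$ contain one chosen $m_0 \in \Z^d$ per realized class yields $|J| = O(N^{a_2})$, with $a_2$ independent of $N$.

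The main delicate point is justifying the asserted identity $D_m = \{ n \in V_m \cap \Z^d : |n| \les N,\ |\phi_m(n)| \les N^{a_1}\}$: the inclusion ``$\subset$'' is immediate from $D_m \subset V_m$ together with $\phi_m(n) = m \cdot n$ on $V_m \cap \Z^d$, while ``$\supset$'' holds because any $n$ in the right-hand side lies in $\Z^d$ and meets the two defining conditions of $D_m$. All other steps are soft counting arguments where overcounting (e.g.\ the same $V$ arising from many choices of spanning tuples, or the same equivalence class being described by multiple bases) is harmless, since we only require a polynomial upper bound on $|J|$ and have full freedom to enlarge the exponent $a_2$.
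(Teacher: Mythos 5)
Your argument is correct, and it takes a genuinely different and noticeably simpler route than the paper's proof. The paper decomposes $D_m$ into the level sets $D_{m,p} = D_m \cap \{m\cdot n = p\}$, equips each slice with an LLL-reduced lattice basis, and then invokes a separate auxiliary lemma (Lemma~\ref{LEM:Y1}, proved by induction on the dimension) to bound the size of the dual coordinate vector $y$. Your approach avoids all of this machinery by observing that the pair $(D_m, f_m|_{D_m})$ is equivalent data to the pair $(V_m, \phi_m)$, where $V_m = \mathrm{span}_\R(D_m)$ and $\phi_m$ is the restriction of $n \mapsto m\cdot n$ to $V_m \cap \Z^d$. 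The crucial identity $D_m = \{n \in V_m \cap \Z^d : |n| \les N, |\phi_m(n)| \les N^{a_1}\}$, which you verify, makes the recovery of $D_m$ from $(V_m, \phi_m)$ immediate, and then a straightforward (if crude) enumeration of bases drawn from $\Z^d \cap \{|n| \les N\}$ and integer value tuples in $[-C N^{a_1}, C N^{a_1}]^k$ gives the polynomial bound $O(N^{d^2 + da_1})$. One small point you could tighten: when you say the form $\phi$ is ``determined by its values on any real basis drawn from $D_m$,'' it is cleanest to phrase this as a surjection from the set of (basis, integer-value-tuple) pairs onto the set of realized $(V, \phi)$ pairs, since for a fixed $V$ there is no canonical choice of basis valid for all $m$; but as you correctly note, overcounting on the source side is harmless. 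The paper's more elaborate construction is not needed for the stated lemma; your proof buys a much shorter argument and dispenses with the LLL reduction and Lemma~\ref{LEM:Y1} entirely.
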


\begin{proof}
We first consider  the one-dimensional case.
For $|m| \les N^{a_1}$, we include such $m$ in $J$.
If $|m| \gg N^{a_1}$, 
it follows from \eqref{EJ1x} that 
$D_m = \{0\}$ and thus $f_m \equiv 0$ on $D_m$.
Thus, by including one such $m_0$ with 
 $|m_0| \gg N^{a_1}$
in $J$, 
we have 
 $f_{m} = f_{m_0}\equiv 0$ 
with $D_{m} = D_{m_0} = \{0\}$ for any $|m|\gg N^{a_1}$.
By construction, we have 
 $|J| = O(N^{a_1})$ 
such that \eqref{EJ2y} holds.

Next, we consider the higher-dimensional case.
Fix $m \in \Z^d$, and write 
\begin{align}
 D_m = \bigcup_{|p| \les N^{a_1}} D_{m, p}, 
\quad \text{where}\ \  D_{m, p} = D_m \cap \{m\cdot n  = p \}.
\label{EJ2a}
\end{align}

\noi
Fix 
$p$ with $|p| \les N^{a_1}$
such that  $D_{m, p} \ne \varnothing$, 
and 
let $\{n_i\}_{i = 0}^r \subset D_{m, p}$
(for some $0 \le r \le d$) be a maximal affine independent\footnote{Namely, 
there exists no $(\al^0, \al^1, \dots, \al^r) \subset \R^{r+1}\setminus\{0\}$ with $\sum_{i = 0}^r \al^i = 0$
such that $\sum_{i = 0}^r \al^i n_i = 0$.} set in $D_{m, p}$.
Let $L$ be the lattice generated by 
$\{n_i - n_0\}_{i = 0}^r$, 
and fix an 
 LLL reduced basis; see, for example, 
 \cite{LLL} and \cite[Chapter 4]{Bre}
 for the definition of an LLL reduced basis
$\{\l_i\}_{i = 1}^r$.
Then, it follows from 
\cite[Theorem 4.8]{Bre}
that 
\begin{align*}
|\l_i| \les \max_{k = 1, \dots,  r}|n_k - n_0|\les N.
\end{align*}

\noi
Then, given any $n \in D_{m, p}$, there is a unique  vector 
$\al = \al(n) = (\al^1, \dots, \al^r) \in \Z^r$ 
such that $|\al| \les N$,\footnote{This follows
from the almost orthogonality of the LLL reduced basis $\{\l_i\}_{i = 1}^r$;
see \cite[p.\,56]{Bre}.}
$n - n_0 = \sum_{i=1}^r \al^i \l_i$, and 
\begin{align}
m \cdot n = m \cdot \bigg( \sum_{i=1}^r \al^i \l_i\bigg) + m \cdot n_0 
= y \cdot \al + m \cdot n_0,
\label{EJ4}
\end{align}
%g_j (n) = 
where $y = (y^1, \cdots, y^r) \in \Z^r$ with 
$y^i = m \cdot \l_i \in \Z$.
Since 
$n, n_0 \in D_{m, p} \subset D_m$, 
it follows from~\eqref{EJ1x}
that 
 $|m \cdot n|, |m \cdot n_0| \les N^{a_1}$.
Then, using \eqref{EJ4}, we see that 
 $|y \cdot \al(n)| \les N^{a_1}$
 for any $n \in D_{m, p}$.
For each $i = 1, \dots, r$, 
define $\al_i = (\al_i^1, \dots, \al_i^r)$
such that 
 $n_i - n_0 = \sum_{i=1}^r \al^k_i \l_k$.
Since
$\{\al_i\}_{i = 1}^r$ is a set of  linearly independent  vectors 
in $\Z^r$ 
with $|\al_i| = O(N)$, 
we conclude that $|y| \les N^{Ca_1}$; see Lemma \ref{LEM:Y1}
and Remark \ref{REM:Y2}.

Given $m \in \Z^d$
and $p$ with $|p| \les N^{a_1}$, 
we define $F_{m, p}$ on $D_{m, p}$ by 
\begin{align}
 F_{m, p}(n) = m\cdot n = y\cdot \al + m\cdot n_0, \quad n \in D_{m, p}, 
 \label{EJ5}
\end{align}

\noi
where $y$ and $\al = \al(n)$ are as in \eqref{EJ4}.
Since $D_{m, p}$ is generated by 
$\{n_i\}_{i = 0}^r$, 
it follows from the linearity that 
the function 
$F_{m, p}$  is completely
determined by its action
on $\{n_i\}_{i = 0}^r$.
Hence, by recalling that 
$|y| \les N^{Ca_1}$, $|\al(n)|\les N$
for any $n \in D_{m, p}$, 
and 
$|m \cdot n_0| \les N^{a_1}$, 
it follows from~\eqref{EJ5} that 
the collection $\{ F_{m, p}\}_{m \in \Z^d}$
consists of $O(N^{a_1'})$ many elements for some $a_1' > a_1$.
Now, we define $F_m$ on $D_m$ by 
setting $F_m = F_{m, p}$ on $D_{m, p}$
for each  $p$ with $|p| \les N^{a_1}$.
Then, 
from~\eqref{EJ2a}, we see  that 
the collection $\{ F_{m}\}_{m \in \Z^d}$
consists of $O(N^{a_2})$ many elements for some $a_2 > a_1'$.
Finally, denoting $\{ F_{m}\}_{m \in \Z^d}$
by $\{f_{m_0}\}_{m_0 \in J}$ 
for some set $J \subset\Z^d$ with $|J| = O(N^{a_2})$
(where $f_{m_0} (n) = m_0\cdot n$ with the domain  $D_{m_0}$
defined in \eqref{EJ1x}), 
we conclude that~\eqref{EJ2y} holds.
\end{proof}

We now discuss a proof of  Lemma \ref{LEM:RT}\,(ii)
and (iii).

\begin{proof}
[Proof of Lemma \ref{LEM:RT} (ii) and (iii)]

On the support of $h$, we have $|b_0- c_0|\les N$. 
We first restrict $h$ to  $|b_0'|, |c_0'| \les N$, 
where  $b_0' = b_0-m$ and $c_0' = c_0-m$ (for suitable $m \in \Z^d$).
More precisely, we
 set 
\begin{align}
H^m_{(b'_0, b_1),  (c'_0, c_1)} = \sum_{n_A} h^m_{(b'_0, b_1), (c_0', c_1), n_A} 
\prod_{n \in \Z^d} 
\Hf_{k_n}(g_{n}), 
\label{RTX1}
\end{align}

\noi
where $h^m$ and $k_n$ are as in \eqref{XY2} and \eqref{RTX0a}, 
respectively.
For fixed $m \in \Z^d$, 
 Lemma \ref{LEM:RT}\,(i) yields
\begin{align}
\begin{split}
& \big\| \| H_{(b'_0, b_1), (c_0', c_1)}^m \|_{(b_0', b_1) \to (c_0', c_1)} \big\|_{L^p(\O)} \\
& \quad \les p^{\frac k2} N^{\frac \theta 2}
 \max_{(B,C)} \|
h^m_{(b'_0, b_1), (c_0', c_1), n_A} \|_{(b_0', b_1), n_B \to (c_0', c_1),  n_C}, 
%& \quad \le p^{\frac k2} N^{\frac \theta2} \max_{(B,C)} \| h \|_{b n_B \to c n_C}, 
\end{split}
\label{RTX2}
\end{align}

\noi
where the implicit constant is independent of $m \in \Z^d$.
By the hypothesis (ii.c), 
the tensor $h^m_{(b_0', b_1), (c_0', c_1), n_A}$ 
is determined by $(b_0', b_1, c_0', c_1, n_A)$ and 
\[|b_0'+m|^2 - |c_0'+m|^2 = 2m \cdot (b_0'-c_0') + (|b_0'|^2 - |c_0'|^2).\]

\noi 
From Lemma \ref{LEM:DH}, 
we see that there exists $\{m_j\}_{j=1}^J \subset \Z^d $ with $J =O(N^{a_2})$ 
such that for any $m \in \Z^d$, 
 there exists $j \in \{1, \dots, J\}$ such that 
\begin{align}
|b_0'+m|^2 - |c_0'+m|^2 = |b_0'+m_j|^2 - |c_0'+m_j|^2 
\label{XY2a}
\end{align}

\noi
for any  $b_0',c_0' \in \Z^d $ such that $|b_0'|, |c_0'| \le C_1 N$ and 
$\big||b_0'+m|^2 - |c_0'+m|^2\big| \les N^{a_1}$, 
(which in particular implies 
$|m \cdot (b_0'-c_0')|\les N^{a_1}$).
% By orthogonality and \eqref{RTX2}, we may assume that different $f_i$ are $N$ apart. 

Fix small $\ta > 0$.
Then, by almost orthogonality, the observation above
with $J =O(N^{a_2})$,  and~\eqref{RTX2}, we have
\begin{align}
\begin{split}
\big\| \| H_{bc} \|_{b \to c} \big\|_{L^p(\O)}  
& \les \Big\| \sup_{m \in \Z^d} 
 \| H_{(b'_0, b_1), (c_0', c_1)}^m \|_{(b_0', b_1) \to (c_0', c_1)}  \Big\|_{L^p(\O)} \\
& =  \Big\| \sup_{j = 1, \dots, J} 
 \| H_{(b'_0, b_1), (c_0', c_1)}^{m_j} \|_{(b_0', b_1) \to (c_0', c_1)} 
 \Big\|_{L^p(\O)} \\
& \le
  \bigg\| \Big(\sum_{j = 1, \dots, J} 
   \| H_{(b'_0, b_1), (c_0', c_1)}^{m_j} \|_{(b_0', b_1) \to (c_0', c_1)}^p 
 \Big)^\frac 1p\bigg\|_{L^p(\O)} \\
& \les
N^{\frac{a_2}{p}}
\sup_{j = 1, \dots, J}
  \big\|  
   \| H_{(b'_0, b_1), (c_0', c_1)}^{m_j} \|_{(b_0', b_1) \to (c_0', c_1)}  \big\|_{L^p(\O)} \\
 & \les
 p^{\frac k2} N^{\theta}
 \sup_{m \in \Z^d}
 \max_{(B,C)} \| h^m_{(b_0', b_1), (c_0', c_1), n_A}  \|_{(b_0', b_1), n_B \to (c_0', c_1),  n_C}
%
% \Big( \max_{(B,C)} \| h \|_{b n_B \to c n_C}\Big)
\end{split}
\label{RTX3}
 \end{align}

\noi
for any $p > \frac{2a_2}{\ta} \gg 1$.
This proves
\eqref{RTX0c}.

\medskip

\noi
(iii)
As in Part (ii), with $b_0' = b_0-m$ and $c_0' = c_0-m$, 
we define 
\begin{align}
\wt H^m_{(b'_0, b_1),  (c'_0, c)} = \sum_{n_A} \wt h^m_{(b'_0, b_1), (c_0', c_1), n_A} 
\prod_{n \in \Z^d} 
\Hf_{k_n}(g_{n}), 
\label{XY3}
\end{align}

\noi
where $\wt h^m$ and $k_n$ are as in \eqref{XY4} and \eqref{RTX0a}, respectively.
Then, 
it follows from 
 Lemma \ref{LEM:RT}\,(i) that 
\begin{align}
\begin{split}
&  \big\| \| \wt H_{(b'_0, b_1), (c_0', c_1)}^m \|_{(b_0', b_1) \to (c_0', c_1)} \big\|_{L^p(\O)} \\
& \quad 
 \les p^{\frac k2} N^{\frac \theta2}
  \max_{(B,C)} \|
\wt h^m_{(b'_0, b_1), (c_0', c_1), n_A} \|_{(b_0', b_1), n_B \to (c_0', c_1),  n_C}, 
\end{split}
\label{XY5}
\end{align}

\noi
where the implicit constant is independent of $m \in \Z^d$; 
see \eqref{RTX2}.

We first consider the case $|m| \ge 20 C_1N$.
In this case, 
we have 
$|b_0'+m|, |c_0'+m| \ge 19 C_1 N $
and thus the condition 
$|b_0'+m|, |c_0'+m| \ge 10C_1 N$
on the right-hand side of  \eqref{XY4} is vacuous.
In particular, 
by comparing \eqref{XY4} with \eqref{XY2}, we
note  that 
the tensor $\wt h^m_{(b_0', b_1), (c_0', c_1), n_A}$ 
%defined in \eqref{XY4}
depends only on  $(b_0',b_1, c_0', c_1, n_A)$ and 
$|b_0'+m|^2 - |c_0'+m|^2$.
Then, proceeding as in Part~(ii) with 
Lemma~\ref{LEM:DH}, 
we see that there exists $\{m_j\}_{j=1}^J \subset \Z^d $ with $J =O(N^{a_2})$ 
such that for any $m \in \Z^d$
with $|m| \ge 20 C_1N$, 
 there exists $j \in \{1, \dots, J\}$ such that \eqref{XY2a} holds
for any  $b_0',c_0' \in \Z^d $ such that $|b_0'|, |c_0'| \le C_1 N$ and 
$\big||b_0'+m|^2 - |c_0'+m|^2\big| \les N^{a_1}$.
Without loss of generality, 
we assume that $|m_j| \ge 20 C_1 N$ for each $j = 1, \dots, J$, 
since, by definition, for each $j = 1, \dots, J$,
\eqref{XY2a} holds for some $|m|\ge 20 C_1 N$
and thus, if $|m_j| <  20 C_1 N$, 
we can replace $m_j$ by such $m$.

%(which in particular implies 
%$|m \cdot (b_0'-c_0')|\les N^{a_1}$).
% By orthogonality and \eqref{RTX2}, we may assume that different $f_i$ are $N$ apart. 

Fix small $\ta > 0$.
Proceeding as in \eqref{RTX3}
with \eqref{XY5}, we then obtain
\begin{align}
\begin{split}
\big\| \| \wt H_{bc} \|_{b \to c} \big\|_{L^p(\O)}  
& \les \Big\| \sup_{m \in \Z^d} 
 \| \wt H_{(b'_0, b_1), (c_0', c_1)}^m \|_{(b_0', b_1) \to (c_0', c_1)}  \Big\|_{L^p(\O)} \\
& \le \Big\| \sup_{|m| < 20 C_1 N}
 \| \wt H_{(b'_0, b_1), (c_0', c_1)}^m \|_{(b_0', b_1) \to (c_0', c_1)}  \Big\|_{L^p(\O)} \\
& \quad 
+  \Big\| \sup_{|m| \ge  20 C_1 N} 
 \| \wt H_{(b'_0, b_1), (c_0', c_1)}^m \|_{(b_0', b_1) \to (c_0', c_1)}  \Big\|_{L^p(\O)} \\
& \les
N^{\frac{d}{p}}
 \sup_{|m| < 20 C_1 N}
  \big\|  
   \| \wt H_{(b'_0, b_1), (c_0', c_1)}^{m} \|_{(b_0', b_1) \to (c_0', c_1)}  \big\|_{L^p(\O)} \\
& \quad + 
N^{\frac{a_2}{p}}
\sup_{j = 1, \dots, J}
  \big\|  
   \| \wt H_{(b'_0, b_1), (c_0', c_1)}^{m_j} \|_{(b_0', b_1) \to (c_0', c_1)}  \big\|_{L^p(\O)} \\
 & \les
 p^{\frac k2} N^{\theta}
\sup_{m \in \Z^d}
  \max_{(B,C)} \|
\wt h^m_{(b'_0, b_1), (c_0', c_1), n_A} \|_{(b_0', b_1), n_B \to (c_0', c_1),  n_C}
\end{split}
\label{XY6}
 \end{align}

\noi
for any $p >  \max\big( \frac d\ta,  \frac{2a_2}{\ta}\big) \gg 1$, 
where we applied Part (i) of this lemma in 
estimating the contribution from $|m|< 20 C_1 N$
(since 
we have 
 $|b_0|, |c_0| < 21 C_1 N$ under 
$|m|< 20 C_1 N$).
This proves~\eqref{XY1a}.
\end{proof}

\begin{remark}
\rm 

(i)
Instead of the hypothesis (ii.c)
Lemma \ref{LEM:RT}\,(ii), 
 suppose that 
\begin{itemize}

\smallskip

\item[(ii.c')] except for  $O(N)$ many values of $m \in \Z^d$, 
$h^m$
defined in  \eqref{XY2}
is determined by $(b_0', b_1, c_0', c_1, n_A)$ and 
$|b_0'+m|^2 - |c_0'+m|^2$.

\smallskip

\end{itemize}

\noi
Then, 
proceeding as in the proof of Lemma \ref{LEM:RT}\,(iii), 
we see that the conclusion of 
Lemma~\ref{LEM:RT}\,(ii) holds.

\smallskip

\noi
(ii)
%From By a simple bound, applying 
%Lemma 4.10 in \cite{DNY3} to 
%the right-hand side of 
From \eqref{RTX0c} with \eqref{Z0x}, 
we have 
\begin{align*}
\big\| \| H_{bc} \|_{b \to c} \big\|_{L^p(\O)} 
\les p^{\frac k2} N^{\theta}
\max_{(B,C)} \| h \|_{b n_B \to c n_C},
\end{align*}

\noi
which is closer to the format of 
\cite[Proposition 4.15]{DNY3}.
%We, however, stated 
%\eqref{RTX0c} as it is, 
%since it is  
%
%
%
A similar comment applies to 
Lemma~\ref{LEM:RT}\,(iii).

\end{remark}

\begin{remark}\label{REM:tensor}\rm

As it is stated, the random tensor estimate (Lemma \ref{LEM:RT})
is  for linear operators.
However, given a random multilinear operator of the form: %$T(u_1, \dots, u_k)$, 
%we can view it as a linear operator
%acting on the tensor product $u_1\otimes \cdots \otimes u_k$
%and apply Lemma \ref{LEM:RT}.
\begin{align*}
H_{b_1, \dots, b_k, c}
= 
\sum_{n_A} h_{b_1, \dots, b_k,c, n_A} 
\prod_{n \in \Z^d} 
\Hf_{k_n}(g_{n}), 
\quad 
b_1, \dots, b_k \in \Z^d, \  c \in (\Z^d)^{q-k}, 
\end{align*}

\noi
acting on $u_{b_1}, \dots, u_{b_k}$
(where $k_n$ is as in \eqref{RTX0a}), 
we can view it as a linear operator
acting on the tensor product 
$u_{b_1}\otimes \cdots \otimes u_{b_k}$
to which 
we can apply Lemma \ref{LEM:RT}.
By noting\footnote{Note that 
a bound
$\|u_1\otimes \cdots \otimes u_k\|_{\bigotimes_{j = 1}^k B_j}
\le  \prod_{j = 1}^k \|u_{j}\|_{B_j}$
holds true for any tensor norm (such 
as the projective norm; see \cite[Section 2.1 and Chapter 6]{Ryan}).
}
\[
\|u_{b_1}\otimes \cdots \otimes u_{b_k}\|_{\bigotimes_{j = 1}^k \l^2_{b_j}}
= \prod_{j = 1}^k \|u_{k_j}\|_{\l^2_{b_j}}, 
\]

\noi
we then obtain a multilinear bound
for $H_{b_1, \dots, b_k, c}$.
We refer readers to 
\cite[Lemma 8.1]{BDNY}
on an improvement of this (trivial) argument
in the bilinear case.

\end{remark}

\subsection{Lattice point counting \& tensor norm estimates}

Given dyadic $N, N_1, N_2, N_3\ge 1$ and $m \in \Z$, 
define the base tensor $h^{{\bf N}, (m)}$ by 
\begin{align}
\label{baseT}
\begin{split}
h^{{\bf N}, (m)} &  = h^{N, N_1, N_2, N_3, (m)} (n,n_1,n_2,n_3) \\
& = \ind_{|n| \sim N}  \cdot 
\bigg( \prod_{j=1}^3 \ind_{|n_j|\sim N_j}  \bigg) \cdot \ind_{n \neq n_1,n_3} \cdot \ind_{\{n = n_1-n_2+n_3\}}  
\cdot  \ind_{\{\phi(\bar n) = m\}}, 
\end{split}
\end{align}

\noi
where $\phi(\bar n)$ is given by 
\begin{align}
\label{phi1}
\phi(\bar n) = |n|^2 - |n_1|^2 + |n_2|^2 - |n_3|^2.
\end{align}

\noi
Our main goal in this subsection is 
to establish basic bounds
on the base tensor $h^{{\bf N}, (m)}$;
see Lemma \ref{LEM:basetensor}.
In Section \ref{SEC:RT1} - \ref{SEC:RT4}, 
we estimate
the random tensor terms I - IV
by applying the random tensor estimate (Lemma \ref{LEM:RT})
and the tensor norm estimates (Lemma \ref{LEM:basetensor})
on the base tensor $h^{{\bf N}, (m)}$ (with weights).

For this purpose, 
we first establish some counting estimates.

\begin{lemma}
\label{LEM:count1}
{\rm (i)} 
%Let $\mathcal{R}=\mathbb{Z}$ or $\mathbb{Z}[i]$.
%Then, 
Given $m\in\Z\setminus \{0\}$
and $a_0,b_0\in \Z$, 
we have
\begin{equation}
\big|\big\{(a, b)\in \Z^2: m=ab,\,\,|a-a_0|\leq M,\,\,|b-b_0|\leq N\big\}\big|
= O(M^\eps N^\eps).
\label{count0}
\end{equation}

\noi
for any $\eps > 0$.

\medskip

\noi
{\rm (ii)}
Given 
dyadic $N, N_1, N_2, N_3\ge 1$ 
and  $m \in \Z$, define 
\begin{align}
\label{SN1}
\begin{split}
S^{{\bf N}, (m)} = \big\{(n, n_1, n_2, n_3)\in (\Z^2)^4
: \ & n=n_1-n_2+n_3, \
 n\ne n_1, n_3, \\
& \phi(\bar n) = m, \ 
|n|\sim N, 
|n_j|\sim  N_j, \ j = 1, 2, 3\big\}, 
\end{split}
\end{align}

\noi
where $\phi(\bar n)$ is as in \eqref{phi1}.
Given a subset $A$ of $\{n, n_1, n_2, n_3\}$, 
we then let $S^{{\bf N}, (m)}_A$ denote 
the collection of 
$(n, n_1,n_2,n_3)\in S^{{\bf N}, (m)}$
when the frequencies in $A$ are fixed.
For example, 
$S^{{\bf N}, (m)}_{n}$ denotes 
the collection of 
$(n, n_1,n_2,n_3)\in S^{{\bf N}, (m)}$
when the frequency $n$ is  fixed, 
while\footnote{For simplicity of notation, 
we used $n_1n_2$ to denote the set $\{n_1, n_2\}$.} 
$S^{{\bf N}, (m)}_{n_1n_2}$ denotes 
the collection of 
$(n, n_1,n_2,n_3)\in S^{{\bf N}, (m)}$
when the frequencies $n_1$ and $n_2$ are  fixed.
Then, the following bounds hold\textup{;}
\begin{align*}
\textup{(ii.a)}\ \ & 
|S^{{\bf N}, (m)}|  \les 
\min \big(N_1^2 N_3^2 (N\wedge N_2)^{\eps}, N^2 N_2^2 (N_1\wedge  N_3)^{\eps}, 
\\
& \hphantom{XXXXXXXl}
(N\wedge N_2)^2 (N_1N_3)^{1+\eps}, (N_1\wedge N_3)^2 (NN_2)^{1+\eps} \big), \\
\textup{(ii.b)}\ \ & 
|S_n^{{\bf N}, (m)}|\lesssim  \min \big(N_2^2 (N_1\wedge N_3)^{\eps}, 
(N_1N_2)^{1+\eps}, (N_1 N_3)^{1+\eps}, (N_2 N_3)^{1+\eps}\big), \\
\textup{(ii.c)}\ \ & 
|S^{{\bf N}, (m)}_{n_1}|\lesssim  \min \big(N_3^2 (N\wedge N_2)^{\eps}, (NN_2)^{1+\eps}, 
(N N_3)^{1+\eps}, 
(N_2 N_3)^{1+\eps} \big), \\
\textup{(ii.d)}\ \  &  
|S^{{\bf N}, (m)}_{n_2}|\lesssim  \min \big(N^2 (N_1\wedge N_3)^{\eps},
 (NN_1)^{1+\eps}, (N N_3)^{1+\eps}, 
 (N_1N_3)^{1+\eps}
\big), \\
\textup{(ii.e)}\ \ & 
|S^{{\bf N}, (m)}_{n_3}|\lesssim \min \big( N_1^2 (N\wedge N_2)^{\eps}, 
(N N_1)^{1+\eps}, 
(N N_2)^{1+\eps}, (N_1 N_2)^{1+\eps}\big), \\
\textup{(ii.f)}\ \ & 
|S^{{\bf N}, (m)}_{nn_1}|\lesssim  \min (N_2, N_3),\\
\textup{(ii.g)}\ \ & 
|S^{{\bf N}, (m)}_{nn_2}|\lesssim  \min (N_1, N_3)^{\eps}, \\
\textup{(ii.h)}\  \ & 
|S^{{\bf N}, (m)}_{nn_3}|\lesssim \min (N_1, N_2), \\
\textup{(ii.i)}\  \ & 
|S^{{\bf N}, (m)}_{n_1n_2}|\lesssim  \min(N, N_3), \\
\textup{(ii.j)}\ \ & 
 |S^{{\bf N}, (m)}_{n_1n_3}|\lesssim   \min (N, N_2)^{\eps},\\
 \textup{(ii.k)}\ \ & 
|S^{{\bf N}, (m)}_{n_2n_3}|\lesssim  \min (N, N_1), 
\end{align*}

\noi
where $a \wedge b  = \min (a,b)$.

\end{lemma}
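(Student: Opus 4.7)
\smallskip

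\noindent\textbf{Proof plan.}
Part~(i) is the classical divisor bound.  If $m = ab \neq 0$, then $a$ must divide $m$, so there are at most $d(|m|) = O_\eps(|m|^\eps)$ choices of $a$ (after which $b = m/a$ is determined).  Under the two box constraints one has $|m| = |ab| \le (|a_0| + M)(|b_0| + N)$, which in the applicable regime $|a_0| \les M$, $|b_0| \les N$ produces the bound $O((MN)^\eps) = O(M^\eps N^\eps)$.

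\smallskip

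For Part~(ii), the starting point is the algebraic identity
\[
\phi(\bar n) \;=\; |n|^2 - |n_1|^2 + |n_2|^2 - |n_3|^2 \;=\; 2(n - n_1)\cdot(n - n_3),
\]
valid whenever $n = n_1 - n_2 + n_3$ (direct expansion using $n_2 = n_1 - n + n_3$).  Setting $\mu = n - n_1$ and $\nu = n - n_3$, both nonzero by the assumptions $n \neq n_1, n_3$, the resonance condition $\phi(\bar n) = m$ becomes $\mu \cdot \nu = m/2$, and the four frequencies are parametrized by $(n, \mu, \nu) \in \Z^2 \times (\Z^2 \setminus \{0\})^2$ via $n_1 = n - \mu$, $n_2 = n - \mu - \nu$, $n_3 = n - \nu$.

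\smallskip

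The two-frequency-fixed estimates (ii.f)--(ii.k) then split into two geometric regimes.  When an \emph{adjacent} pair $\{n, n_1\}$, $\{n, n_3\}$, $\{n_1, n_2\}$, or $\{n_2, n_3\}$ is fixed, one of $\mu, \nu$ (or a translate) is determined, and $\mu \cdot \nu = m/2$ reduces to a linear equation in the remaining vector; the corresponding frequency then lies on an affine line, on which the number of lattice points in a box of size $R$ is $O(R)$.  Taking $R$ equal to either of the two remaining size parameters gives the $\min(N_i, N_j)$ bounds in (ii.f), (ii.h), (ii.i), and (ii.k).  When an \emph{opposite} pair $\{n, n_2\}$ or $\{n_1, n_3\}$ is fixed, both $\mu \pm \nu$ and $\mu \cdot \nu$ are fixed; completing the square places $n_1 - \tfrac12(n + n_2)$ (resp.\ $n - \tfrac12(n_1 + n_3)$) on a circle of integer squared radius $O(N_{\max}^2)$, and Gauss' divisor bound $r_2(k) = O_\eps(k^\eps)$ yields the $\min(N_i, N_j)^\eps$ bounds in (ii.g) and (ii.j).

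\smallskip

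The one-frequency-fixed bounds (ii.b)--(ii.e) and the total count (ii.a) follow by summing the two-fixed bounds over the remaining free frequency (or pair) in its dyadic shell, different orderings producing the different claimed bounds---for instance, summing (ii.g) over $n_2$ yields $|S_n^{{\bf N},(m)}| \les N_2^2 (N_1 \wedge N_3)^\eps$, and summing (ii.j) over $(n_1, n_3)$ yields $N_1^2 N_3^2 (N \wedge N_2)^\eps$.  The bounds of the form $(N_i N_j)^{1+\eps}$ rely instead on a \emph{refined} line count: lattice points on $\mu \cdot \nu = m/2$ for fixed primitive $\mu$ have spacing $\sim |\mu|$ along the line, yielding $O(R/|\mu|)$ points in a ball of radius $R$; summing $\sum_{|\mu| \les R} |\mu|^{-1} \les R \log R$ (with non-primitive $\gcd$-losses absorbed into the divisor bound of Part~(i)) produces the $1 + \eps$ exponent.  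The main obstacle is organizational rather than conceptual: each of the eleven bounds requires a specific choice of summation variable and a particular ordering of the three available tools (trivial line count, refined $|\mu|^{-1}$ spacing, Gauss' circle divisor bound) to obtain the stated $\min$-expression with the correct exponents, so the proof amounts to a careful but extensive case analysis.
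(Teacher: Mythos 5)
Your overall architecture (the identity $\phi(\bar n)=2(n-n_1)\cdot(n-n_3)$, trivial linear counts for the four ``adjacent'' pairs, and the lattice-points-on-a-circle bound for the two ``opposite'' pairs) agrees with the paper's proof, and summing (ii.g), (ii.j) over the remaining free frequency is indeed how the paper derives the $N_2^2(N_1\wedge N_3)^\eps$-type alternatives. There are, however, two genuine gaps.

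First, your sketch of Part~(i) is not a proof of the stated lemma. You bound the count by $d(|m|)=O(|m|^\eps)$ and then invoke $|m|\le(|a_0|+M)(|b_0|+N)$, but this only gives $O(M^\eps N^\eps)$ in the regime $|a_0|\lesssim M$, $|b_0|\lesssim N$, which you call ``applicable'' without justification. The lemma asserts the bound for \emph{all} $a_0,b_0\in\Z$; when the box is small but far from the origin one can have $|m|\gg MN$, and $O(|m|^\eps)$ is then strictly weaker than $O(M^\eps N^\eps)$. The paper does not reprove this either, but it points to \cite[Lemma~4.4]{DNY2}, where the shifted-box case is handled by a genuinely different argument; your sketch suppresses exactly the part that makes the statement non-trivial.

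Second, for the $(N_iN_j)^{1+\eps}$ alternatives you replace the paper's argument by a ``refined line count.'' The paper instead writes the resonance equation coordinatewise, fixes the first coordinates of the two free frequencies (this produces $O(N_iN_j)$ choices and pins down the constant $c_2$), and then applies the divisor bound of Part~(i) to the second coordinates --- a clean reduction to~(i). Your alternative is plausible in outline but not correct as written: in $\Z^2$ one has $\sum_{1\le|\mu|\le R}|\mu|^{-1}\sim R$, not $R\log R$, and more importantly the spacing of lattice points on $\{\nu:\mu\cdot\nu=m/2\}$ is $|\mu|/\gcd(\mu_1,\mu_2)$, not $|\mu|$. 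You acknowledge the gcd issue but do not resolve it; making it rigorous requires stratifying by $d=\gcd(\mu_1,\mu_2)\mid (m/2)$ and also treating balls centered away from the origin (which is what actually occurs here, since $\mu=n-n_1$ ranges over a ball centered at $n$). This is doable but it is a separate and more delicate argument than the paper's coordinatewise divisor count, and at present it is a sketch with an incorrect intermediate inequality rather than a proof.
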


\begin{proof} %[Proof of Lemma \ref{LEM:count}] 
(i) The bound \eqref{count0}
follows from a divisor counting argument; see \cite[Lemma~4.4]{DNY2} 
for details.

\smallskip

\noi
(ii)
Some of these counting estimates and similar forms have been previously studied;
see, for example, \cite[Lemmas 1 and 2]{BO96}, 
\cite[Section 4]{FOSW}, 
\cite[Lemma 4.4]{DNY2}, 
and 
\cite[Lemma 2.5]{DNY4}.\footnote{Lemma 2.4 in the arXiv version.} 
%%
%To illustrate the idea, we 
%estimate $|S^{{\bf N}, (m)}|$ in the following.

%part of (
We first note that the bound  (ii.f)
is trivial.
Given  $n$ and $n_1$, 
we fix $n_2$ which has $O(N_2)$ many choices.
This  in turn determines $n_3$,
yielding
$|S^{{\bf N}, (m)}_{nn_1}|\lesssim  N_2$.
By symmetry, we obtain 
$|S^{{\bf N}, (m)}_{nn_1}|\lesssim  N_3$.
The bounds (ii.h), (ii.i), and (ii.k) follow
from a similar consideration.

Next, we prove (ii.a).
We will  discuss four different ways to count the lattices in $S^{{\bf N}, (m)}$.
First, note that 
there are $O(N_1^2 N_3^2)$ many choices for $n_1$ and $n_3$.
With fixed $n_1$ and $n_3$,  
we count the number of choices for $n$ and $n_2$.
From \eqref{SN1} with \eqref{phi1}, 
it suffices to count $n$ satisfying
\begin{align}
|n| \sim N 
\quad \text{and}\quad 
|n|^2 +|n-n_1 -n_3|^2 & =c_1(n_1, n_3, m), 
\label{count1}
\end{align}

\noi
or $n_2$ satisfying
\begin{align}
|n_2|\sim N_2
\quad \text{and}\quad 
|n_2|^2 +|n_2-n_1-n_3|^2 & =c_1(n_1, n_3, m), 
\label{count1a}
\end{align}

\noi
where $c_1$ is a  constant depending on $n_1$, $n_3$, 
and $m$.
By rewriting the second condition in~\eqref{count1} as
\begin{align*}
 |2 n - n_1 -n_3 |^2 = c_2(n_1, n_3, m), 
\end{align*}

\noi
it suffices to count lattice points $n$ with $|n| \sim N$ on a given circle
(of radius $r > 0$), 
which is bounded by $O(N^\eps)$;  
see the proof of \cite[Lemma 1]{BO96}.
%where one divides the counting into two cases $\log N \ges \log r$
%or $\log N \ll \log r$
A similar argument shows that the number of choices of $n_2$
satisfying \eqref{count1a}
is $O(N_2^\eps)$.
Hence, we obtain
$|S^{{\bf N}, (m)}_{n_1n_3} |\lesssim  (N\wedge N_2)^{\eps}$,
which implies
$|S^{{\bf N}, (m)}|\lesssim N_1^2 N_3^2 (N\wedge N_2)^{\eps}$. 
By symmetry, we have
$|S^{{\bf N}, (m)}_{nn_2} |\lesssim  (N_1\wedge N_3)^{\eps}$,
which implies
$|S^{{\bf N}, (m)}|\les 
N^2 N_2^2 (N_1\wedge  N_3)^{\eps}$.

Let us proceed differently by fixing
 $n_2$ first.
Then,
it suffices to count
$(n_1, n_3)$ satisfying
\begin{align*}
2(n_2-n_1)\cdot (n_2-n_3) = m.
\end{align*}

\noi
We denote the first (and second) coordinates
of $n_j$ by $n_j^{(1)}$  (and $n_j^{(2)}$, respectively).
Let
\[
(n_2^{(i)}- n_1^{(i)}) (n_2^{(i)} - n_3^{(i)}) = c_i
\]
for $i = 1,2$.
We first consider the case $c_1 = c_2 = 0$.
Due to the condition $n_2 \neq n_1, n_3$, without loss of generality, we may assume $n_2^{(1)} = n_1^{(1)}$ and $n_2^{(2)} = n_3^{(2)}$. Namely,  $n_1^{(1)}$ and $n_3^{(2)}$ are fixed 
since $n_2$ has been fixed.
Then,  there are $O(N_1N_3)$ many choices 
for $n_1^{(2)}$ and $n_3^{(1)}$,
(and thus for  $n_1$ and $n_3$), 
 which implies $|S^{{\bf N}, (m)}| \les N_2^2 (N_1N_3)$ in this case.
In the following, we assume that $c_1 \neq 0$ or $c_2 \neq 0$.
By symmetry, we only consider the case $c_2 \neq 0$.
By fixing the first coordinates $n_1^{(1)}$ and $n_3^{(1)}$ 
(which have $O(N_1N_3)$ many choices),
we count the number of choices for the second coordinates
$n_1^{(2)}$ and $n_3^{(2)}$,  satisfying
\[
(n_2^{(2)}- n_1^{(2)}) (n_2^{(2)} - n_3^{(2)})
= c_2\]

\noi
for some fixed $c_2 \in \R \setminus\{0\}$.
It follows from \eqref{count0}
that
 $|S^{{\bf N}, (m)}_{n_2}| \les  (N_1N_3)^{1+\eps}$,
which implies $|S^{{\bf N}, (m)}| \les N_2^2 (N_1N_3)^{1+\eps}$.
By symmetry, we obtain
 $|S^{{\bf N}, (m)}_{n}| \les  (N_1N_3)^{1+\eps}$,
which implies $|S^{{\bf N}, (m)}| \les N^2 (N_1N_3)^{1+\eps}$. 
Also, 
 $|S^{{\bf N}, (m)}_{n_j}| \les  (NN_2)^{1+\eps}$, $j = 1, 3$, 
 and 
$|S^{{\bf N}, (m)}| \les (N_1\wedge N_3)^2 (NN_2)^{1+\eps}$.

Lastly, we prove (ii.b).
The bounds (ii.c), (ii.d), and (ii.e) follow from symmetry.
We have already shown  $|S^{{\bf N}, (m)}_{n}| \les  (N_1N_3)^{1+\eps}$.
Given $n$, 
we fix  $n_2$ which has $O(N_2^2)$ many choices.
Then, %from \eqref{count0}, 
by repeating the first part of the proof of (ii.a), 
we see that there are $O((N_1\wedge N_3)^\eps)$ many choices
for $n_1$ and $n_3$, 
which yields
$|S^{{\bf N}, (m)}_{n}|\les N_2^2 (N_1\wedge N_3)^{\eps}$.
It remains to prove
$|S^{{\bf N}, (m)}_{n}| \les(N_1N_2)^{1+\eps}$, since $|S^{{\bf N}, (m)}_{n}| \les (N_2 N_3)^{1+\eps}$
follows from symmetry.
Given $n$, 
it suffices to count
$(n_1, n_2)$ satisfying
\begin{align}
2(n_2-n_1)\cdot (n_1-n) = m.
\label{count3}
\end{align}

\noi
By writing \eqref{count3}
as 
\[
(n_2^{(i)}- n_1^{(i)}) (n_1^{(i)} - n^{(i)}) = c_i, \quad i = 1, 2, 
\]

\noi
we first consider the case $c_1 = c_2 = 0$.
Due to the condition $n_1 \neq n, n_2$, without loss of generality, we may assume 
$n_1^{(1)} = n^{(1)}$ and $n_1^{(2)} = n_2^{(2)}$. 
Given $n$, 
there are $O(N_1)$ many choices for $n_1^{(2)}$
(which fixes $n_2^{(2)}$)
and $O(N_2)$ many choices for $n_2^{(1)}$.

Next, we consider the case  $c_1 \neq 0$ or $c_2 \neq 0$.
By symmetry, we only consider the case $c_2 \neq 0$.
By fixing the first coordinates $n_1^{(1)}$ and $n_2^{(1)}$ 
(which have $O(N_1N_2)$ many choices),
we count the number of choices for the second coordinates
$n_1^{(2)}$ and $n_2^{(2)}$,  satisfying
\[
(n_2^{(2)}- n_1^{(2)}) (n_1^{(2)} - n^{(2)})
= c_2\]

\noi
for some fixed $c_2 \in \R \setminus\{0\}$.
It follows from \eqref{count0}
(with $a = n_2^{(2)}- n_1^{(2)}$, $a_0 = 0$, 
$b = n_1^{(2)} - n^{(2)}$, and $b_0 = -  n^{(2)}$) 
that
 $|S^{{\bf N}, (m)}_{n}| \les  (N_1N_2)^{1+\eps}$.
\end{proof}

\begin{remark}\label{REM:count}
\rm
Note that the condition $n \neq n_1, n_3$ in \eqref{SN1} is crucial
in the cases where we used the divisor counting argument.
Define
a set $\wt S^{{\bf N}, (m)}$
by \eqref{SN1} after dropping the condition 
 $n\ne n_1, n_3$.
 Let $m =0$ and consider
\begin{align*}
\wt S^{{\bf N}, (0)}
\cap \{n = n_1\}
& = \{ (n_1,n_2) \in (\Z^2)^2: |n_j| \sim N_j, \ j = 1, 2\}.
\end{align*}

\noi
and thus $|\wt S^{{\bf N}, (0)}| \ges N_1^2 N_2^2$.
When $N \sim N_1 \gg N_2 \sim N_3$, 
the  bound above is much worse than (ii.a) in Lemma \ref{LEM:count1}.
\end{remark}

The next lemma follows from Schur's test and the counting lemma (Lemma \ref{LEM:count1}).

\begin{lemma}
\label{LEM:basetensor}

Given $\eps > 0$, the following estimates hold\textup{;}
\begin{align}
\|h^{{\bf N},(m)} \|_{n_1 n_2 n_3 \to n} &\lesssim  \min \big(N_2 (N_1\wedge N_3)^{\eps}, (N_{\med}N_{\min})^{\frac12 +\eps}\big), 
\notag \\
\|h^{{\bf N},(m)} \|_{n_1 \to n n_2n_3  } 
&\lesssim  \min \big(N_3 (N\wedge N_2)^{\eps}, (NN_2)^{\frac12+\eps}, (N N_3)^{\frac12+\eps}, (N_2 N_3)^{\frac12+\eps}\big), 
\notag \\
\|h^{{\bf N},(m)} \|_{ n_2 \to n n_1n_3 } &\lesssim  \min \big(N (N_1\wedge N_3)^{\eps}, 
(N N_1)^{\frac12+\eps}, (N N_3)^{\frac12+\eps}, 
(N_1N_3)^{\frac12+\eps}\big), 
\notag \\
\|h^{{\bf N},(m)} \|_{n_3 \to n n_1n_2} &\lesssim \min 
\big( N_1 (N\wedge N_2)^{\eps}, (N N_1)^{\frac12+\eps},
(N N_2)^{\frac12+\eps},
 (N_1 N_2)^{\frac12+\eps}\big), 
\label{base1}\\
\|h^{{\bf N},(m)} \|_{ n_2n_3\to nn_1} 
& \lesssim    \min (N, N_1)^{\frac12}
\cdot \min (N_2, N_3)^{\frac12}, 
\notag \\
\|h^{{\bf N},(m)} \|_{n_1n_3 \to nn_2} 
&\lesssim  
 \min (N, N_2)^{\eps}\cdot 
\min (N_1, N_3)^{\eps},  
\notag \\
\|h^{{\bf N},(m)} \|_{n_1n_2\to nn_3} 
&\lesssim 
 \min \big(N, N_3\big)^{\frac 12 }\cdot 
\min (N_1, N_2)^{\frac12}, 
\notag
\end{align}

\noi
uniformly in 
dyadic $N, N_1, N_2, N_3\ge 1$ and $m \in \Z$, 
where 
%$N_{\max}$, 
$N_{\med}$ and $N_{\min}$ are as in 
 \eqref{ord1}.

\end{lemma}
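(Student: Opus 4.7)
The plan is to prove each of the seven estimates in \eqref{base1} by a direct application of Schur's test combined with the counting bounds from Lemma \ref{LEM:count1}. Recall that for any nonnegative tensor $h_{bc}$, Schur's test yields
\[
\|h\|_{b \to c}^2 \le \Big(\sup_c \sum_b h_{bc}\Big)\Big(\sup_b \sum_c h_{bc}\Big).
\]
Since $h^{{\bf N},(m)}$ in \eqref{baseT} is simply the indicator function of the set $S^{{\bf N},(m)}$ in \eqref{SN1}, each row/column sum appearing in the Schur bound is exactly $|S^{{\bf N},(m)}_A|$ for some choice of ``fixed'' index set $A \subset \{n, n_1, n_2, n_3\}$, which is precisely what Lemma \ref{LEM:count1}\,(ii) estimates.

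For the first four lines of \eqref{base1} (``one-to-three'' partitions), the crucial observation is that the momentum conservation condition $n = n_1 - n_2 + n_3$ built into $S^{{\bf N},(m)}$ determines any one of the four frequencies uniquely from the other three. Thus one of the two Schur factors is $\le 1$ and the bound collapses to the square root of a single-frequency slice cardinality; for example, $\|h^{{\bf N},(m)}\|_{n_1 n_2 n_3 \to n}^2 \le \sup_n |S^{{\bf N},(m)}_n|$, and analogously for the three ``$n_j \to \cdots$'' variants. Plugging in estimates (ii.b)--(ii.e) of Lemma \ref{LEM:count1} and taking square roots yields the first four lines of \eqref{base1}. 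The only simplification to note is that the three terms $(N_1N_2)^{1+\eps}$, $(N_1N_3)^{1+\eps}$, $(N_2N_3)^{1+\eps}$ appearing in (ii.b) collectively give $(N_{\med}N_{\min})^{1+\eps}$.

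For the last three lines (``two-to-two'' partitions), both Schur factors are genuine two-frequency slice cardinalities, and I would pair them as follows: (ii.f) with (ii.k) for $\|h^{{\bf N},(m)}\|_{n_2n_3 \to nn_1}$, (ii.g) with (ii.j) for $\|h^{{\bf N},(m)}\|_{n_1n_3 \to nn_2}$, and (ii.h) with (ii.i) for $\|h^{{\bf N},(m)}\|_{n_1n_2 \to nn_3}$, taking the geometric mean in each case. That such pairings produce the compact, factored bounds in \eqref{base1} is a consequence of the symmetry of $\phi(\bar n)$ in \eqref{phi1} under the swap $(n, n_1) \leftrightarrow (n_2, n_3)$, which forces matching dependence structure in the two factors. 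The only potential obstacle here is purely organizational---matching slice cardinalities to the correct Schur factor and checking that each term inside the various minima in \eqref{base1} is realized by one of the options listed in Lemma \ref{LEM:count1}---rather than analytical, since the nontrivial divisor-counting and lattice-points-on-circles inputs have already been absorbed into Lemma \ref{LEM:count1}.
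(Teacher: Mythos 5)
Your proposal is correct and follows essentially the same route as the paper: Schur's test reducing each operator norm to a product of slice cardinalities, the observation that the constraint $n = n_1 - n_2 + n_3$ makes one Schur factor trivial in the four ``one-to-three'' cases, and the pairings (ii.f)/(ii.k), (ii.g)/(ii.j), (ii.h)/(ii.i) for the ``two-to-two'' cases — all exactly as in the paper's proof.
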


\begin{proof}
By Schur's test with \eqref{baseT}, 
\begin{align*}
\| h^{{\bf N}, (m)} \|_{n_1n_2n_3 \to n}
& \le \bigg( \sup_{n \in \Z^2} \sum_{n_1, n_2, n_3 \in \Z^2}
h^{{\bf N}, (m)} \bigg)^\frac 12 
\bigg(\sup_{n_1, n_2, n_3 \in \Z^2} \sum_{n \in \Z^2}
h^{{\bf N}, (m)}\bigg)^\frac 12 \\
& =   |S_n^{{\bf N}, (m)}|^\frac 12 |S^{{\bf N}, (m)}_{n_1, n_2, n_3}|^\frac 12 \\
& \le   |S_n^{{\bf N}, (m)}|^\frac 12, 
\end{align*}

\noi
where the third step follows
from the fact that $n$ is uniquely determined 
by $n_1$, $n_2$, and $n_3$ under $n = n_1 - n_2 + n_3$.
Then, the first bound in \eqref{base1}
follows from Lemma \ref{LEM:count1}.

The other bounds in \eqref{base1}
follow from 
Lemma \ref{LEM:count1}, 
once we note
\begin{align*}
%    \big\| h^{(m)}_{nn_1n_2n_3}  \big\|_{nn_1n_2n_3} & \le |S|^{\frac12}\\
\| h^{{\bf N},(m)}  \|_{n_1\to nn_2n_3} & \le |S^{{\bf N},(m)}_{n_1}|^{\frac12},  \\
\| h^{{\bf N},(m)}  \|_{n_2\to nn_1n_3 } & \le |S^{{\bf N},(m)}_{n_2}|^{\frac12},  \\
\| h^{{\bf N},(m)}  \|_{n_3\to nn_1n_2 } & \le |S^{{\bf N},(m)}_{n_3}|^{\frac12},  \\
\| h^{{\bf N},(m)}  \|_{n_2n_3\to nn_1} & \le 
|S^{{\bf N},(m)}_{n_2n_3}|^{\frac12} 
|S^{{\bf N},(m)}_{nn_1}|^{\frac12} , 
\\
\| h^{{\bf N},(m)}  \|_{n_1n_3 \to nn_2} & \le |S^{{\bf N},(m)}_{n_1n_3}|^{\frac12}
 |S^{{\bf N},(m)}_{nn_2}|^{\frac12},   \\
\| h^{{\bf N},(m)}  \|_{n_1n_2 \to n n_3} & \le |S^{{\bf N},(m)}_{n_1n_2}|^{\frac12} 
|S^{{\bf N},(m)}_{nn_3}|^{\frac12}, 
\end{align*}

\noi
each of 
which follows from Schur's test as above.
\end{proof}

\section{Resonant contribution}\label{SEC:res}

In this section, we estimate the
resonant term $\RR(w+z)$
by  following 
the presentation in 
 \cite[Subsection 4.2]{CO}.
Due to the multilinearity and the symmetry, it suffices to treat
\[\RR(w, w, w), \quad 
\RR(z, z, z), \quad
\RR(w, z, z), \quad \text{and}\quad 
\RR(w, w, z).\]

\noi
Since 
the complex conjugate on the second argument of $\RR(v_1, v_2, v_3)$
in \eqref{non1}  
does not play any role, we drop the complex conjugate sign 
in the following.

\medskip

\noi 
$\bullet$ {\bf Case 1:} $\RR(w, w, w)$.
\\ \indent
This case is already included in 
\eqref{tri1}, giving
\begin{align*}
\|\RR(w, w, w)\|_{X^{s, -\frac 12 +2 \eps}_T} 
\les 1
%\label{R1}
\end{align*}

\noi
for any $w$ with $\|w\|_{X^{s, \frac 12 + \eps}_T} \le 1$
and $0 < T \le 1$.

\noi $\bullet$ {\bf Case 2:} 
$\RR(z, z, z)$.
\\ \indent
 By H\"older's inequality  ($\frac{1}{2} =\frac{1}{p} + \frac{1}{q}$
 with $p \gg1$ and $q > 2$ close to $2$ such that $(\frac 12 - 2\eps ) q > 1$), 
 \eqref{lin1}, 
Young's inequality, \eqref{decay}, 
and  Lemma \ref{LEM:prob1}, we have 
\begin{align*}
& \|\RR(z, z, z)\|_{X^{s, -\frac 12 +2 \eps}_T} \\
 & \quad \les \sup_{n \in \Z^2} \|\jb{\tau + |n|^2}^{-\frac{1}{2}+2\eps}\|_{L^{q}_\tau}
\bigg\| \jb{n}^{s-3} |g_n|^3 \intt_{\tau = \tau_1 - \tau_2 + \tau_3} 
\prod_{j = 1}^3 \ft \eta_{_T}(\tau_j + |n|^2) d\tau_1 d\tau_2 \bigg\|_{\l^2_{n} L^p_\tau}\\
& \quad \les
T^{1-\frac 1p} \big\|\jb{n}^{s-3} |g_n |^3 \big\|_{\l^2_n} 
\les T^{1 - \frac 1p-3\dl_1 } \|\jb{n}^{s-3+ 3\dl_2} \|_{\l^2_n} 
\les 1,   
\end{align*}

\noi 
%for some small $\ta > 0$, 
yielding \eqref{tri2}, 
outside an exceptional set of probability $< e^{-\frac{c_1}{T^{c_2}}}$, 
provided that $\dl_1, \dl_2 > 0$
are sufficiently small
and $s < 2 - 3\dl_2$.

\medskip

\noi 
$\bullet$ {\bf Case 3:} 
$\RR(w, z, z)$.
\\ \indent
By H\"older's inequality (with $p$ and $q$ as in Case 2),
 \eqref{lin1}, 
 Young's inequality, \eqref{decay}, and 
Lemma~\ref{LEM:prob1}, we have 
\begin{align}
\begin{split}
& \|\RR(w, z, z)\|_{X^{s, -\frac 12 +2 \eps}_T} \\
& \quad  \les 
\bigg\| \jb{n}^{s-2} |g_n|^2 
\intt_{\tau = \tau_1 - \tau_2 + \tau_3} 
\ft {\wt w}(n, \tau_1)
\prod_{j = 2}^3 \ft \eta_{_T}(\tau_j + |n|^2) d\tau_1 d\tau_2 \bigg\|_{\l^2_{n} L^p_\tau}\\
& \quad \les
T^{\frac{1}{2}-\frac 1p} \Big(\sup_{n\in \Z^2} \jb{n}^{-2} |g_n|^2\Big) \|\jb{n}^{s} \ft{\wt w} (n, \tau) \|_{\l^2_n L^2_\tau} 
\les T^{\frac 12 - \frac 1p - 2\dl_1} \|\wt w \|_{X^{s, 0}}
\end{split}
\label{R2}
\end{align}

\noi 
for any extension   $\wt w$  of $w$ with  $\|w \|_{X^{s, \frac{1}{2}+\eps}_T} \leq 1$, 
outside an exceptional set of probability $< e^{-\frac{c_1}{T^{c_2}}}$, 
provided that $\dl_1, \dl_2 > 0$
are sufficiently small.
Hence, by taking an infimum over all extensions $\wt w$ of $w$, 
it follows from \eqref{R2} and \eqref{Xsb2} that 
\begin{align*}
 \|\RR(w, z, z)\|_{X^{s, -\frac 12 +2 \eps}_T} 
\les  \| w \|_{X^{s, \frac 12 + \eps}_T}
\le 1
\end{align*}

\noi
outside an exceptional set of probability $< e^{-\frac{c_1}{T^{c_2}}}$, 
yielding 
\eqref{tri2} in this case.

\medskip

\noi $\bullet$ {\bf Case 4:}
$\RR(w, w, z)$.
\\ \indent
By H\"older's inequality (with $p$ and $q$ as in Case 2),
 \eqref{lin1}, 
 Young's inequality with \eqref{decay},  Lemma \ref{LEM:prob1}, 
H\"older's inequality (in $\tau$), 
and  $\l^2_n \subset \l^4_n$,
we have 
\begin{align}
\begin{split}
&  \|\RR(w, w, z)\|_{X^{s, -\frac 12 +2 \eps}_T}  \\
 &\quad  \les
\bigg\| \jb{n}^{s-1} |g_n| \intt_{\tau = \tau_1 - \tau_2 + \tau_3}
\prod_{j = 1}^2 \ft {\wt w}(n, \tau_j) 
\cdot  \ft \eta_{_T}(\tau_3 + |n|^2)d\tau_1 d\tau_2 \bigg\|_{\l^2_{n} L^p_\tau}\\
& \quad \les
T^{\frac{1}{2}-\frac 1p} 
\Big(\sup_{n \in \Z^2} \jb{n}^{-\dl_2} |g_n|\Big)
\| \jb{n}^s \ft{\wt{w}} (n, \tau)\|_{\l^4_n L^\frac{4}{3}_\tau}^2 \\
& \quad
\les T^{\frac{1}{2}-\frac 1p - \dl_1}
\|\wt w \|_{X^{s, \frac{1}{4}+\eps}}^2
\end{split}
\label{R3}
\end{align}

\noi 
for any extension   $\wt w$  of $w$ with  $\|w \|_{X^{s, \frac{1}{2}+\eps}_T} \leq 1$, 
outside an exceptional set of probability $< e^{-\frac{c_1}{T^{c_2}}}$, 
provided that $\dl_1, \dl_2 > 0$
are sufficiently small
and $s \ge -1+\dl_2$.
Hence, by taking an infimum over all extensions $\wt w$ of $w$, 
it follows from \eqref{R3} and \eqref{Xsb2} that 
\begin{align*}
 \|\RR(w, w, z)\|_{X^{s, -\frac 12 +2 \eps}_T} 
\les  \| w \|_{X^{s, \frac 12 + \eps}_T}^2
\le 1
\end{align*}

\noi
outside an exceptional set of probability $< e^{-\frac{c_1}{T^{c_2}}}$, 
yielding 
\eqref{tri2} in this case.

Therefore, the conclusion of Proposition \ref{PROP:1}
holds
for the resonant term $\RR(w+z)$.

%\begin{remark} \label{REM:local}\rm
%In this section, 
%we carefully used the definition \eqref{Xsb2} of the local-in-time space $X^{s, b}_T$.
%Strictly speaking, such a care must be taken in all the subsequent  analysis.
%However, this is a routine work and, for simplicity of presentation, we write estimates 
%directly with 
%$\|w\|_{X^{s, b}_T}$ in the following, 
%meaning that the same estimates hold with $\|\wt w\|_{X^{s, b}}$ 
%for any extension $\wt w$ of $w$, 
%allowing us to take an infimum over $\wt w$.
%
%\end{remark}
%

\section{Purely stochastic term}
\label{SEC:PS1}

In this section, we study the purely stochastic term $\NN(z, z, z)$,
where $\NN$ and $z$ are
as in~\eqref{non1}
and \eqref{lin1}, respectively.

\begin{proposition}
\label{PROP:PS1}

Let $s < \frac 12$.
Given small $\eps > 0$, 
 there exists $\ta > 0$
such that 
\begin{align}
\label{PS1}
\Big\|
\| \NN(z, z, z)\|_{X^{s,-b'}_T}\Big\|_{L^p(\O)}
\les p^\frac 32 T^\ta
\end{align}

\noi
for any finite $p \ge 1$ and $0 < T\ll 1$, where 
 $b' = \frac 12 - 2\eps$
is  as in \eqref{b1}.
In particular, 
 the conclusion of Proposition \ref{PROP:1}
holds
for the purely stochastic term $\NN(z, z, z)$.

\end{proposition}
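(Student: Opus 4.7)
The plan is to exploit that $\NN(z,z,z)$ is trilinear in the complex Gaussians $\{g_n\}$ and lies in the third homogeneous Wiener chaos, so that Lemma~\ref{LEM:hyp} reduces \eqref{PS1} to an $L^2(\O)$ variance estimate, paying only a factor $p^{\frac 32}$. Concretely, I would first introduce the smooth cutoff $\eta_T$ and compute
\begin{align*}
\widehat{\eta_T \NN(z,z,z)}(n,\tau) = \sum_{\substack{n = n_1-n_2+n_3 \\ n \ne n_1,\,n_3}} \frac{g_{n_1}\cj{g_{n_2}}g_{n_3}}{\jb{n_1}\jb{n_2}\jb{n_3}} \ft\eta_T\big(\tau + |n_1|^2 - |n_2|^2 + |n_3|^2\big).
\end{align*}
The non-resonant constraint $n\ne n_1,n_3$ forces $n_1 \ne n_2$ and $n_2 \ne n_3$, so the only possible coincidence is $n_1=n_3$; since $\E[g_n^2]=0$ for complex standard Gaussians, the summand has mean zero and lies in the third Wiener chaos. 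Applying Minkowski's inequality (for $p\ge 2$) and Lemma~\ref{LEM:hyp} with $k=3$ pointwise in $(n,\tau)$ reduces matters to showing
\begin{align*}
\Big\|\jb{n}^s\jb{\tau+|n|^2}^{-b'}\,\E\big[|\widehat{\eta_T\NN(z,z,z)}(n,\tau)|^2\big]^{\frac 12}\Big\|_{\ell^2_n L^2_\tau} \les T^\theta.
\end{align*}

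Second, by Wick's theorem for complex Gaussians (only $g$ with $\cj g$ pair, enforcing $\{n_1,n_3\}=\{n_1',n_3'\}$ and $n_2=n_2'$), the variance is bounded by
\begin{align*}
\E\big[|\widehat{\eta_T\NN(z,z,z)}(n,\tau)|^2\big] \les \sum_{\substack{n = n_1-n_2+n_3 \\ n \ne n_1,\,n_3}} \frac{|\ft\eta_T(\tau + |n|^2 - \phi(\bar n))|^2}{\jb{n_1}^2 \jb{n_2}^2 \jb{n_3}^2},
\end{align*}
with $\phi(\bar n)$ as in~\eqref{phi1}. To gain a positive power of $T$, I would first use Lemma~\ref{LEM:decay1} to replace $-b'$ by $-b'+\theta$ for some small $\theta>0$ (at a cost of $T^\theta$), and then integrate in $\tau$ against the weight $\jb{\tau+|n|^2}^{-2b'+2\theta}$; the scale-$T^{-1}$ concentration of $\ft\eta_T$ together with $\|\ft\eta_T\|_{L^2}^2\sim T$ yields an upper bound of the form $T\jb{\phi(\bar n)}^{-2b'+2\theta}$ for the $\tau$-integral.

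Third, dyadically decomposing each of $n,n_1,n_2,n_3$ into Littlewood-Paley pieces of sizes $N,N_1,N_2,N_3$ and grouping according to $m=\phi(\bar n)\in\Z$ reduces the problem to bounding
\begin{align*}
\sum_{N,N_1,N_2,N_3} \frac{N^{2s}}{(N_1N_2N_3)^{2}} \sum_{|m|\les N_{\max}^2} \jb{m}^{-2b'+2\theta} \,|S^{\mathbf N,(m)}|,
\end{align*}
with $S^{\mathbf N,(m)}$ as in~\eqref{SN1}. Since $2b'-2\theta=1-O(\eps)$, the sum in $m$ loses only a factor $N_{\max}^{O(\eps)}$; applying the counting estimates of Lemma~\ref{LEM:count1} in their various symmetric forms to control $\sup_m|S^{\mathbf N,(m)}|$ (e.g.\ (ii.a) together with its analogues), a case analysis on which of the $N_j$ is largest shows that each dyadic regime is summable provided $s<\frac 12$ and $\eps>0$ is sufficiently small.

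The main obstacle is the dyadic case analysis in the last step: different configurations (high-low, high-high-low, high-high-high) call for different counting bounds from Lemma~\ref{LEM:count1} and different rearrangements of the normalizations $\jb{n_j}^{-2}$, and one must verify that the worst-case, fully balanced regime $N\sim N_1\sim N_2\sim N_3$ still gives a summable contribution once the small $\eps$- and $\theta$-losses are included. The chaos reduction and the Wick computation in the first two steps are otherwise largely mechanical.
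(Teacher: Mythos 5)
Your proposal is correct and follows essentially the same route as the paper: reduce to an $L^2(\O)$ variance bound via Minkowski and the Wiener chaos estimate (Lemma~\ref{LEM:hyp}), integrate the $\jb{\tau}^{-2b'}$-weighted $\tau$-integral of $|\ft\eta_T|^2$ to extract $T^\theta\jb{\phi(\bar n)}^{-2b'}$, and then reduce to the counting bounds of Lemma~\ref{LEM:count1}\,(ii.a) on $S^{\bf N,(m)}$. The preliminary application of Lemma~\ref{LEM:decay1} is redundant since the $\tau$-integral already supplies the positive power of $T$ (cf.\ the paper's estimate $\int_\R\jb{\tau}^{-2b'}|\ft\eta_T(\tau-m)|^2\,d\tau\les T^{1-\eps}\jb{m}^{-2b'}$), and the closing dyadic case analysis you flag is exactly what the paper packages into the single unified bound $|S^{\bf N,(m)}|^{1/2}\les N_{\max}^{1/2+\eps}N_{\med}^{1/2+\eps}N_{\min}$.
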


\begin{proof} %[Proof of Proposition \ref{PROP:PS1}]

We first prove 
\begin{align}
\label{PS4}
\begin{split}
\Big\|\| \Q_{N} 
\NN(\Q_{N_1} z, \Q_{N_2} z, \Q_{N_3} z) \|_{X^{s,-b'}_T}\Big\|_{L^p(\O)}
 \les p^\frac 32 T^\ta \frac{N_{\max}^{ \dl} N^{s}} {N_1N_2N_3}
\sup_{m \in \Z} |S^{{\bf N}, (m)}|^\frac 12 
\end{split}
\end{align}

\noi 
for small $\ta> 0$ and any small $ \dl > 0$, uniformly in dyadic $N, N_1, N_2, N_3 \ge 1$, 
where $\Q_N$ is the Littlewood-Paley
 projector
and 
$S^{{\bf N}, (m)}$ is as in \eqref{SN1}.

From \eqref{non1}
and \eqref{lin1}, we have 
\begin{align*}
%\begin{split}
&  \F_{x, t}\big(e^{-it \Delta}  
\eta_{_T}
\Q_N 
\NN(\Q_{N_1} z, \Q_{N_2} z, \Q_{N_3} z) \big)(n, \tau)\\
& \quad = 
%\sum_{\substack{n \in \Z^2\\|n|\sim N}}
%e^{in \cdot x} 
\ind_{|n|\sim N}
\sum_{\substack{n = n_1 - n_2 + n_3\\ n \neq n_1, n_3}} 
\ft \eta_{_T} (\tau - \phi (\bar n))
\bigg( \prod_{j=1}^3 \frac{g_{n_j}^*}{\jb{n_j}}\ind_{|n_j|\sim N_j}  \bigg), 
%\end{split}
%\label{PS5}
\end{align*}

\noi 
where $\phi(\bar n)$ is as in \eqref{phi1}
and $g^*_{n_j}$ is defined by 
\begin{align}
g^*_{n_j} = 
\begin{cases}
g_{n_j}, & \text{if } j = 1, 3,\\
\cj{g_{n_2}}, & \text{if } j = 2.
\end{cases}
\label{g1}
\end{align}

%
%\noi
%Them, 
%from \eqref{Xsb2}, we have 
%\begin{align*}
%& \|\Q_{N} 
%\NN(\Q_{N_1} z, \Q_{N_2} z, \Q_{N_3} z)
%\|_{X^{s,-b'}_T }^2 \\
%& \quad \le \|\eta_{_T}\Q_{N} 
%\NN(\Q_{N_1} z, \Q_{N_2} z, \Q_{N_3} z)
%\|_{X^{s,-b'} }^2 \\
%& \quad \le  \sum_{m \in \Z} \sum_{\substack{n \in \Z^2\\|n|\sim N}} 
%\jb{m}^{- 2b'} \jb{n}^{2s} 
%\Bigg|  \sum_{\substack{n = n_1 - n_2 + n_3\\ n \neq n_1, n_3 \\ \phi (\bar n) = m}} 
%\bigg( \prod_{j=1}^3 \frac{g_{n_j}^*}{\jb{n_j}}\ind_{|n_j|\sim N_j}  \bigg)
%\Bigg|
%\end{align*}
%
%
%\noi 
%for any $0 < T\le 1$.

\noi
Then,  from 
Minkowski's integral inequality, the Wiener chaos estimate (Lemma~\ref{LEM:hyp}), 
and
the orthogonality of 
$\big\{\ind_{n_2\ne n_1, n_3} \prod_{j=1}^3 g_{n_j}^*\big\}_{n_1, n_2, n_3\in \Z^2}$
in $L^2(\O)$
(as elements in the homogeneous Wiener chaos of order $3$), 
we have 
\begin{align}
\begin{split}
& \text{LHS of \eqref{PS4}}\\
& \les p^\frac 32 
\Bigg\|% \sum_{m \in \Z} \sum_{\substack{n \in \Z^2\\|n|\sim N}}
 \jb{\tau}^{-b'} \jb{n}^{s}  
 \bigg\|\sum_{\substack{n = n_1 - n_2 + n_3\\ n \neq n_1, n_3}} % \\ \phi (\bar n) = m}} 
\ft \eta_{_T} (\tau - \phi (\bar n))
\bigg( \prod_{j=1}^3 \frac{g_{n_j}^*}{\jb{n_j}}\ind_{|n_j|\sim N_j}  \bigg)
\bigg\|_{L^2(\O)}
\Bigg\|_{L^2_\tau \l^2_{|n|\sim N}}
\\
& \les p^\frac 32 
\bigg\|
\jb{\tau }^{-b'} \jb{n}^{s}  
\ind_{A(n)} %\ind_{\phi (\bar n) = m}
\ft \eta_{_T} (\tau - \phi (\bar n))
\prod_{j=1}^3 \frac{1}{\jb{n_j}}  
\bigg\|_{L^2_\tau  \l^2_{|n|\sim N} \l^2_{n_1, n_2}}, 
\end{split}
\label{PS7}
\end{align}

\noi
where $A(n)$ is given by 
\begin{align}
A(n) = \big\{(n_1, n_2, n_3) \in (\Z^2)^3:
n = n_1 - n_2 + n_3, \ 
n \ne n_1, n_3, \ |n_j|\sim N_j\big\}.
 \label{RT12a}
\end{align}

\noi
From 
Lemma \ref{LEM:conv} with \eqref{b1}, we have 
\begin{align}
\begin{split}
 \int_\R  \jb{\tau }^{-2b'}|\ft \eta_{_T} (\tau - m)|^2d\tau 
&  \les T^{2} 
\sum_{\mu \in \Z}
\int_{\mu - \frac 12}^{\mu+\frac 12} 
\frac{1}{\jb{\tau }^{2b'}\jb{T(\tau - m)}^{1+\eps}}d\tau \\
& \les T^{1-\eps} 
\sum_{\mu \in \Z}
\frac{1}{\jb{\mu }^{2b'}\jb{\mu - m}^{1+\eps}}\\
& \les T^{1-\eps} 
\jb{m}^{-2b'}
\end{split}
\label{PS8}
\end{align}

\noi
for any $0 < T \le 1$, uniformly in $m \in \Z$.
Thus, from \eqref{PS7} and \eqref{PS8}, we obtain
\begin{align}
 \text{LHS of \eqref{PS4}}
 \les p^\frac 32 T^\ta
\bigg\|
\jb{m}^{-b'} \jb{n}^{s}  
\ind_{A(n)} \ind_{\phi (\bar n) = m}
\prod_{j=1}^3 \frac{1}{\jb{n_j}}  
\bigg\|_{\l^2_m  (\Z) \l^2_{|n|\sim N} \l^2_{n_1, n_2}}
\label{PS9}
\end{align}

\noi
for some $\ta > 0$.
Recall from 
\eqref{b1}
that $-2b' = -1 + 4\eps$.
Then, by noting 
from \eqref{phi1}
that the summand in~\eqref{PS9} is non-trivial
only for  $|m | \les N_{\max}^2$
(where $N_{\max}$ is as in \eqref{ord1}), we have
 \begin{align*}
 \text{LHS of \eqref{PS4}}
& \les p^\frac 32 T^\ta
 \frac{ N_{\max}^{4\eps}N^{s}}{N_1 N_2 N_3}  \sup_{m \in \Z} 
\Bigg( \sum_{\substack{n, n_1, n_2, n_3 \in \Z^2\\n =  n_1 - n_2 + n_3\\ n \neq n_1, n_3 \\ \phi (\bar n) = m}}  
\ind_{|n|\sim N} \prod_{j=1}^3 \ind_{|n_j|\sim N_j} \Bigg)^\frac 12 \\
& = 
p^\frac 32 T^\ta
 \frac{ N_{\max}^{4\eps}N^{s}}{N_1 N_2 N_3} \sup_{m \in \Z} 
|S^{{\bf N}, (m)}|^\frac 12.
\end{align*}

\noi 
This proves  \eqref{PS4}.

%\[
%N_{*} = \max (N_0,N_1,N_2,N_3).
%\]

From (ii.a)  in Lemma \ref{LEM:count1}, we have  
\begin{align}
|S^{{\bf N}, (m)}|^\frac 12
\les N_{\max}^{\frac 12 + \eps}
N_{\med}^{\frac 12 + \eps}
N_{\min}, 
\label{PS10}
\end{align}

\noi
where
$N_{\max}$, 
$N_{\med}$, and 
$N_{\min}$ are as in \eqref{ord1}.
Then, 
by summing 
  \eqref{PS4} over dyadic blocks $N, N_1, N_2, N_3 \ge1$
and applying \eqref{PS10}, 
we have 
\begin{align*}
\Big\|
 \| \NN(z, z, z)\|_{X^{s,-b'}_T}\Big\|_{L^p(\O)}
&  \les 
p^\frac 32 T^\ta
\sup_{\substack{N, N_1, N_2, N_3\ge 1\\\text{dyadic}}}
 \frac{ N_{\max}^{5\eps }N^{s}}{N_1 N_2 N_3} \sup_{m \in \Z} 
|S^{{\bf N}, (m)}|^\frac 12\\
& \les 
p^\frac 32 T^\ta
\sup_{\substack{N, N_1, N_2, N_3\ge 1\\\text{dyadic}}}
N_{\max}^{-\frac 12 + s +6\eps}
N_{\med}^{-\frac 12 + \eps}\\
& \les 
p^\frac 32 T^\ta, 
\end{align*}

\noi
provided that $s < \frac 12$ and $\eps > 0$ is sufficiently small.
This proves \eqref{PS1}.
The second claim in 
Proposition \ref{PROP:PS1}
follows  from 
\eqref{PS1} and 
Chebyshev's inequality (see also Lemma 4.5 in~\cite{TzvBO}).
\end{proof}

\section{Random tensor term I}
\label{SEC:RT1}

In this section, we estimate
the  random tensor term
 $\NN(z, w, z)$. 
Define the random operator $\TT_1$
by setting
\begin{align}\label{RT0}
\TT_1(w) =  \NN (z, w,z), 
\end{align}

\noi
where $z$ denotes the random linear solution in \eqref{lin1}.
Then, our main goal is to prove the following proposition.

\begin{proposition}
\label{PROP:RT1}
Given small   $s > 0$ and $\eps > 0$, 
 there exists $\ta > 0$
such that 
\begin{align}
\Big\| \| \TT_1 \|_{X^{s,b}_T \to 
X^{s,-b'}_T} \Big\|_{L^p (\O)} 
\les p  T^\ta
\label{RT0a}
\end{align}

\noi
for any finite $p \ge 1$ and $0 < T\ll 1$, where 
$b = \frac 12 + \eps$ and $b' = \frac 12 - 2\eps$
are as in \eqref{b1}.
In particular, 
 the conclusion of Proposition \ref{PROP:1}
holds
for the random tensor term $\NN(z, w, z)$.

\end{proposition}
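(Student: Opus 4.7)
The strategy is to view $\TT_1(w)=\NN(z,w,z)$ as a random linear operator in $w$ and to bound its $X^{s,b}_T\to X^{s,-b'}_T$ operator norm by applying Lemma~\ref{LEM:RT} to its kernel, in the same spirit as the purely stochastic estimate of Section~\ref{SEC:PS1} but with $w$ acting as an independent input rather than another copy of $z$.

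First, I would perform dyadic decompositions $\Q_{N_j}z$ ($j=1,3$), $\Q_{N_2}w$, and $\Q_N$ on the output, reducing \eqref{RT0a} to the per-block estimate
\begin{align*}
\Big\|\|\Q_N\NN(\Q_{N_1}z,\Q_{N_2}w,\Q_{N_3}z)\|_{X^{s,-b'}_T}\Big\|_{L^p(\O)}
\les p\,T^\theta\, N_{\max}^{-\delta}\,\|\Q_{N_2}w\|_{X^{s,b}_T}
\end{align*}
for some small $\delta=\delta(s,\eps)>0$, uniformly in dyadic $N,N_1,N_2,N_3\ge 1$. Summing over dyadic blocks (using $\l^2$-orthogonality of the $\Q_N$ outputs and Cauchy-Schwarz over the $\Q_{N_2}$ inputs) will then yield \eqref{RT0a}, and Chebyshev's inequality will convert it into the probabilistic statement required for Proposition~\ref{PROP:1}, exactly as at the end of the proof of Proposition~\ref{PROP:PS1}.

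Second, taking space-time Fourier transforms on a minimal $X^{s,b}$-extension $\tilde w$ of $w$ (with an $\eta_T$-cutoff on each $z$), and mimicking the modulation calculation leading from \eqref{PS7} to \eqref{PS9}, I would organize the output according to the resonance value $m=\phi(\bar n)\in\Z$ and use \eqref{PS8} together with Cauchy-Schwarz in $m$ to gain $T^\theta$ and reduce matters to the $L^p(\O)$ bound on the operator norm of the random tensor
\begin{align*}
H^{{\bf N},(m)}_{n n_2}
= \ind_{|n|\sim N,\,|n_2|\sim N_2}
\sum_{\substack{n=n_1-n_2+n_3,\ n\neq n_1,n_3 \\ \phi(\bar n)=m,\ |n_j|\sim N_j}}
\frac{g_{n_1}\,g_{n_3}}{\jb{n_1}\jb{n_3}},
\end{align*}
regarded as a map $\l^2_{n_2}\to\l^2_n$. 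The hypotheses of Lemma~\ref{LEM:RT}\,(ii) are satisfied with $b_0=n$, $c_0=n_2$, empty $b_1,c_1$, and $A=\{n_1,n_3\}$: the momentum constraint $n+n_2=n_1+n_3$ together with fixed $(n_1,n_3)$ makes $h^{{\bf N},(m)}$ depend on $n-n_2$ and $(n_1,n_3)$ only, while on its support $|n-n_2|\les N_{\max}$ and $\big||n|^2-|n_2|^2\big|\les N_{\max}^{2}$. Lemma~\ref{LEM:RT}\,(ii) then gives, uniformly in $m$,
\begin{align*}
\Big\|\|H^{{\bf N},(m)}\|_{n_2\to n}\Big\|_{L^p(\O)}
\les p\, N_{\max}^{\theta}\cdot\frac{1}{N_1 N_3}\cdot
\max_{(B,C)}\|h^{{\bf N},(m)}\|_{n_2,n_B\to n,n_C},
\end{align*}
where $(B,C)$ runs over the four partitions of $\{n_1,n_3\}$ and the four tensor norms on the right are precisely those supplied by Lemma~\ref{LEM:basetensor}.

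The main obstacle, which I expect to be the technical heart of the proof, is a regime-by-regime verification that, after multiplication by the $N^s$-weight from the $X^{s,-b'}_T$ norm and the $N_2^{-s}$-gain from $\|\Q_{N_2}w\|_{X^{s,b}_T}$, the preceding bound produces a negative power of $N_{\max}$ for some small $\delta=\delta(s,\eps)>0$. The delicate configurations are the high-high-type regimes where the output is resonantly aligned with a $z$-factor, e.g.\ $N\sim N_1\gg N_2,N_3$ (and its symmetric version with $N_3$); there the partition $n_2 n_3\to n n_1$ in Lemma~\ref{LEM:basetensor} provides $\min(N,N_1)^{1/2}\min(N_2,N_3)^{1/2}$, which after division by $N_1 N_3$ and a small $N^{2s}$-loss still gains a negative power of $N_{\max}$ for sufficiently small $s>0$. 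The remaining regimes (comparable frequencies or $N_2$ dominant) are controlled by the $n_1 n_2 n_3\to n$ partition, for which Lemma~\ref{LEM:basetensor} supplies $(N_{\med}N_{\min})^{1/2+\eps}$, again leading to a summable dyadic bound.
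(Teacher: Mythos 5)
The overall strategy — dyadic decomposition, reduction to the operator norm of a random kernel, and the random tensor estimate fed by the base tensor bounds of Lemma~\ref{LEM:basetensor} — is the right one and matches the paper's, but there is a genuine gap in the regime $N_2\gg\max(N_1,N_3)$ (i.e.\ when the deterministic factor $w$ carries the highest frequency). In that case $N\sim N_2\sim N_{\max}$ and $N_{\med}N_{\min}= N_1N_3$, so after multiplying the base tensor bound by the Gaussian weight $(N_1N_3)^{-1}$ and the Sobolev weight $N^s N_2^{-s}\sim 1$, every partition norm in Lemma~\ref{LEM:basetensor} yields at best $(N_1N_3)^{-1/2+\eps}$, together with a small \emph{loss} $N_{\max}^\theta$ from the random tensor lemma. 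This produces no negative power of $N_{\max}\sim N_2$ at all, so the dyadic sum over $N_2$ diverges and your claimed per-block estimate $\les p\,T^\theta N_{\max}^{-\delta}\|\Q_{N_2}w\|_{X^{s,b}_T}$ fails. The paper handles this regime completely differently: since $|\phi(\bar n)|\sim N_2^2\gg\max(N_1,N_3)^2$ forces a high modulation, they bypass the random tensor estimate and use the deterministic trilinear estimate~\eqref{tri3} combined with Lemma~\ref{LEM:decay1} to convert the large modulation into a gain in $\max(N_1,N_3)$ (see Case~2 of the proof of Proposition~\ref{PROP:RT1}). That deterministic high-modulation step is the key idea your proposal is missing.

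A secondary issue is your invocation of Lemma~\ref{LEM:RT}\,(ii). Its hypothesis (ii.c) requires the tensor to depend only on $b_0-c_0$, $|b_0|^2-|c_0|^2$, and the remaining variables, but the Littlewood--Paley cutoffs $\ind_{|n|\sim N}\ind_{|n_2|\sim N_2}$ in $h^{{\bf N},(m)}$ are not of this form, so (ii.c) is violated. In the regime $\max(N_1,N_3)\ges N_2$ this is harmless because $n,n_2$ are then both confined to a ball of radius $\les\max(N_1,N_3)$ and the basic Lemma~\ref{LEM:RT}\,(i) already applies (this is exactly what the paper's Lemma~\ref{LEM:RT1a} does); in the paper's applications of parts (ii)/(iii) (e.g.\ Subcase~2.b of Proposition~\ref{PROP:RT2}), the dyadic sums over the two large frequencies are performed \emph{first}, turning the dyadic cutoffs into a single threshold indicator $\ind_{|b_0|,|c_0|\gg N_*}$ that is compatible with hypothesis (iii). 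Your proposal skips that reduction. Since the regime where you would actually need part (ii) is also the one where the tensor norms give no $N_2$-gain, the first issue is the decisive one, but you should be aware that even the formal application of Lemma~\ref{LEM:RT}\,(ii) needs the extra preparatory step.
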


\begin{proof} %[Proof of Proposition \ref{PROP:RT1}]

We write
\begin{align*}
\TT_1(w) = \sum_{\substack{N_1,N_3\ge 1\\\text{dyadic}}} \TT_1^{N_1,N_3} (w),
%\label{RT1}
\end{align*}

\noi
where $\TT_1^{N_1,N_3}$ is given by 
\begin{align}
\TT_1^{N_1,N_3} (w) =  \NN(\Q_{N_1} z, w,\Q_{N_3} z).
\label{RT1a}
\end{align}

\noi
Here, $\Q_{N_j}$ denotes the Littlewood-Paley projector.
In the following, we prove
that there exist small $\ta, \dl > 0$ such that 
\begin{align}
\Big\| \| \TT_1^{N_1,N_3} \|_{X^{s,b}_T \to 
X^{s,-b'}_T} \Big\|_{L^p (\O)} 
\les p  T^\ta (N_1N_3)^{-\dl} 
\label{RT2}
\end{align}

\noi
for any finite $p \ge 1$ and $0 < T\ll 1$.
Then, 
\eqref{RT0a} follows from summing over dyadic
$N_1, N_3 \ge 1$, 
and the second claim in 
Proposition \ref{PROP:RT1}
follows  from Chebyshev's inequality. % (see also Lemma 4.5 in~\cite{TzvBO}).

From \eqref{RT1a} with \eqref{RT0} and \eqref{non1}, we have 
\begin{align*}
 \F_{x, t} &
\big(\eta_{_T}\TT_1^{N_1,N_3} (w)\big)
 (n,\tau) \\
& = \sum_{\substack{n = n_1 - n_2 + n_3\\ n \neq n_1,n_3}} 
\bigg(\prod_{j \in \{1, 3\}}
\frac{g_{n_j} }{\jb{n_j}}
\ind_{|n_j|\sim N_j}\bigg)\\
& \quad \times  \int_{\R} \ft \eta_{_T} (\tau + \tau_2 + |n_1|^2 - |n_2|^2 + |n_3|^2) 
\cj{\ft w (n_2, \tau_2 - |n_2|^2)} d\tau_2.
\end{align*} 

\noi 
Thus, we have 
\begin{align}
\begin{split}
 \F_{x, t} &
\big(\eta_{_T} \TT_1^{N_1,N_3} (w)\big)
 (n,\tau -|n|^2) \\
&= \jb{n}^{-s} \sum_{n_2\in \Z^2 } \jb{n_2}^s 
\int_\R 
\Hb_1^{N_1, N_3} (n,n_2,\tau, \tau_2) 
\cj{ \ft w(n_2, \tau_2 - |n_2|^2) } 
d\tau_2,
\end{split}
\label{RT3}
\end{align} 

\noi
where 
\begin{align}
\begin{split}
& \Hb_1^{N_1, N_3} (n,n_2,\tau, \tau_2)  \\
& \quad =  \frac{\jb{n}^{s}}{ \jb{n_2}^{s}} 
\sum_{\substack{n + n_2 = n_1 + n_3\\ n \neq n_1,n_3}} 
\bigg(\prod_{j \in \{1, 3\}}
\frac{g_{n_j} }{\jb{n_j}}
\ind_{|n_j|\sim N_j}\bigg)
\ft \eta_{_T} (\tau + \tau_2 - \phi (\bar n))  
\end{split}
\label{RT4}
\end{align}

\noi
with  $\phi (\bar n)$ as in \eqref{phi1}. 
Then, 
from 
\eqref{Xsb2}, \eqref{RT3}, 
and Cauchy-Schwarz's inequality, 
we have 
\begin{align}
\begin{split}
 \| &  \TT_1^{N_1,N_3} (w) \|_{X^{s,  -b'}_T} 
\le \| \eta_{_T} \TT_1^{N_1,N_3} (\wt w)
\|_{X^{s,  -b'}} \\
& = \big\| \jb{n}^s \jb{\tau}^{-b'}
 \F_{x, t} 
\big(\eta_{_T} \TT_1^{N_1,N_3} (\wt w)\big)
 (n,\tau -|n|^2) 
 \big \|_{\l^2_n L_\tau^2} \\ 
& = \bigg\| \jb{\tau}^{- b'}
\sum_{n_2\in \Z^2} \jb{n_2}^s 
\int_\R 
\Hb_1^{N_1, N_3} (n,n_2,\tau, \tau_2) 
\cj{ \ft {\wt w}(n_2, \tau_2 - |n_2|^2) } 
d\tau_2
\bigg\|_{ \l^2_n L^2_{\tau}}\\  
& \le \bigg\|\jb{\tau}^{- b'}  \int_\R  
 \| \Hb_1^{N_1, N_3} (n,n_2,\tau, \tau_2) \|_{\l^2_{n_2} \to \l^2_n} 
 \| \jb{n_2}^s  \ft {\wt w} (n_2,\tau_2 - |n_2|^2) \|_{\l^2_{n_2}}   d\tau_2 \bigg\|_{L^2_{\tau}} \\
& \le \big\| \jb{\tau}^{- b'} \jb{\tau_2}^{-b} 
\| \Hb_1^{N_1, N_3}  (n,n_2,\tau, \tau_2) \|_{\l^2_{n_2} \to \l^2_n} \big\|_{L^2_{\tau, \tau_2}} \cdot \|\wt w\|_{X^{s,b}}
\end{split} 
\label{RT5}
\end{align}

\noi 
for any extension $\wt w$ of $w$.
By taking an infimum over all extensions $\wt w$ of $w$
and then a supremum over $w$ with $\|w\|_{ X^{s,b}_T} \le 1$, 
we  obtain 
\begin{align}
 \|  \TT_1^{N_1,N_3}  \|_{X^{s,  b}_T\to  X^{s,-b'}_T} 
 \le \big\| \jb{\tau}^{- b'} \jb{\tau_2}^{-b} 
\| \Hb_1^{N_1, N_3}  (n,n_2,\tau, \tau_2) \|_{\l^2_{n_2} \to \l^2_n} \big\|_{L^2_{\tau, \tau_2}}.
\label{RT6}
\end{align}

\noi
Hence, \eqref{RT2} follows once we prove
\begin{align}
\Big\| \big\| \jb{\tau}^{-b'} \jb{\tau_2}^{-b} \| \Hb_1^{N_1, N_3}  (n,n_2,\tau, \tau_2) \|_{\l^2_{n_2} \to \l^2_n} \big\|_{L^2_{\tau, \tau_2}} \Big\|_{L^p (\O)} 
 \les p  T^\ta (N_1N_3)^{-\dl} .
\label{RT7}
\end{align}

Given dyadic $N, N_1, N_2, N_3 \ge 1$, we set 
\begin{align}
\Hb_1^{N, N_1, N_2,  N_3} (n,n_2,\tau, \tau_2)  
= \ind_{|n|\sim N} \ind_{|n_2|\sim N_2}
\Hb_1^{N_1, N_3} (n,n_2,\tau, \tau_2).
\label{RT8}
\end{align}

\noi
%where $\Hb_1^{N_1, N_3} (n,n_2,\tau, \tau_2)  $ is as in \eqref{RT4}.
By introducing  the following short-hand notation:
\begin{align}
 \Hb_1^{{\bf N}} (n,n_2,\tau, \tau_2)  
= 
\Hb_1^{N, N_1, N_2,  N_3} (n,n_2,\tau, \tau_2) , 
\label{RT8a}
\end{align}

\noi
we write 
\begin{align}
\Hb_1^{N_1, N_3}
= 
\sum_{\substack{N, N_2 \ge 1, \text{ dyadic}\\ \max (N_1, N_3) \ges N_2}} 
\Hb_1^{\bf N}
+ 
\sum_{\substack{N, N_2 \ge 1, \text{ dyadic}\\ N_2 \gg \max (N_1, N_3)}} 
\Hb_1^{\bf N}.
\label{RT8b}
\end{align}

\noi
In the following, we separately estimate
the first and second terms on the right-hand side of~\eqref{RT8b}.
Before proceeding further, 
we state a lemma
on the operator norm
of 
$\Hb_1^{\bf N}$
(possibly with further frequency localizations).

\begin{lemma}
\label{LEM:RT1a} 
Let $Q_0$ and $Q_2$ be cubes of side length $\les \max(N_1,N_3)$.
Fix small  $s> 0$.
Then, we have 
\begin{align}
\sup_{\tau,\tau_2\in \R} \Big\| \| 
 \ind_{Q_0} (n) \ind_{Q_2} (n_2)
\Hb_1^{\bf N}
(n,n_2,\tau, \tau_2)
 \|_{\l^2_{n_2} \to \l^2_n} \Big\|_{L^p(\O)} 
\les p  (N_1N_3)^{- \frac 12 + s + \eps}
\label{RT9}
\end{align}

\noi 
for any finite $p \ge 1$
and  dyadic $N, N_1,  N_2, N_3  \ge 1$, 
uniformly in $0 < T \ll 1$
and the cubes $Q_0$ and $Q_2$.

\end{lemma}

We note that the supremum over $\tau$ and $\tau_2$ 
is outside the $L^p(\O)$-norm.
In the following, we first prove \eqref{RT7}
by assuming  Lemma \ref{LEM:RT1a}.
We present a proof of  Lemma~\ref{LEM:RT1a} 
at the end of this section.

\medskip

\noi 
$\bullet$ {\bf Case 1:}
 $\max (N_1, N_3) \ges N_2$. 
\\
\indent
In this case, we have
\begin{align}
\max (N, N_1, N_2, N_3)\sim \max (N_1, N_3),
\label{RT9a}
\end{align}

\noi
since $n = n_1 - n_2 + n_3$.
Then, 
 \eqref{RT7}
 follows once we prove 
\begin{align}
\label{RT10}
\Big\| \big\| \jb{\tau}^{-b'} \jb{\tau_2}^{-b} 
\| \Hb_1^{\bf N}
 (n,n_2,\tau, \tau_2) \|_{\l^2_{n_2} \to \l^2_n} \big\|_{L^2_{\tau, \tau_2}} \Big\|_{L^p (\O)} \les p T^\ta (N_1N_3)^{-2 \dl}, 
\end{align}

\noi 
since the extra factor $(N_1N_3)^{- \dl}$
allows us to sum over dyadic $N,  N_2 \ge 1$.
Furthermore, we  claim that 
 \eqref{RT10} follows once we prove 
\begin{align}
\label{RT11}
\Big\| \big\| \jb{\tau}^{-b} \jb{\tau_2}^{-b} 
\| \Hb_1^{\bf N} (n,n_2,\tau, \tau_2) \|_{\l^2_{n_2} \to \l^2_n} \big\|_{L^2_{\tau, \tau_2}} \Big\|_{L^p (\O)} \les p (N_1N_3)^{-3\dl}.
\end{align}

\noi
(Note from \eqref{b1} that $- b = -\frac 12 - \eps < -\frac 12 < -b' = - \frac 12 + 2\eps$.)
Indeed, 
by applying  \eqref{Z0}
and
crudely estimating
the $\l^2_{n_2, n}$-norm 
by the $\l^1_{n_2, n}$-norm
with \eqref{RT8} and \eqref{RT4}, 
 we have
\begin{align}
\begin{split}
\Big\| & \big\| \jb{\tau_2}^{-b} 
\| \Hb_1^{\bf N}(n,n_2,\tau, \tau_2) \|_{\l^2_{n_2} \to \l^2_n} \big\|_{L^2_{\tau , \tau_2}} \Big\|_{L^p (\O)} \\
& \le \Big\| \big\| \jb{\tau_2}^{-b} 
\| \Hb_1^{\bf N} (n,n_2,\tau, \tau_2) \|_{\l^2_{n_2, n} } \big\|_{L^2_{\tau, \tau_2}} \Big\|_{L^p (\O)} \\
& \les N^s \bigg\| \Big\|  \jb{\tau_2}^{-b} 
 \sum_{\substack{n, n_1,n_2 \in \Z^2 \\ |n| \sim N}}
\ind_{A(n)}
\frac{|g_{n_1}  {g_{n_3} }| }{\jb{n_1}\jb{n_3}}
|\ft \eta_{_T} (\tau + \tau_2 - \phi (\bar n)) |
 \Big\|_{ L^2_{\tau, \tau_2}} \bigg\|_{L^p (\O)} \\
& \les N^s \bigg\|  \sum_{\substack{n, n_1,n_2 \in \Z^2 \\ |n| \sim N}}
\ind_{A(n)}
\frac{|g_{n_1}g_{n_3} |  }{\jb{n_1}\jb{n_3}}
\|  \jb{\tau_2}^{-b}
\ft\eta_{_T} (\tau + \tau_2 - \phi (\bar n)) 
\|_{ L^2_{\tau, \tau_2}} \bigg\|_{L^p (\O)} \\
& \les T^\frac 12 N^s \sum_{\substack{n, n_1,n_2 \in \Z^2 \\ |n| \sim N}}
\frac{\ind_{A(n)}}{\jb{n_1}\jb{n_3}}
\|g_{n_1}  g_{n_3} \|_{L^p (\O)} \les p T^\frac 12  \max(N_1,N_3)^{6}
\end{split}
\label{RT12}
\end{align}

\noi 
for small $s >  0$, 
where the set 
$A(n)$ is as in \eqref{RT12a}.
Here, 
we used \eqref{decay} and 
the fact that  $b > \frac 12$ in the penultimate step
and~\eqref{RT9a}
in the last step.
Then, \eqref{RT10} follows from interpolating \eqref{RT11} and \eqref{RT12}.

We now prove \eqref{RT11}.
Without loss of generality, assume $p \ge 2 $.
Then, 
from Minkowski's integral inequality, 
the fact that $b > \frac 12$, 
and 
 Lemma \ref{LEM:RT1a} with \eqref{RT9a}, 
 we obtain
\begin{align}
\begin{split}
\text{LHS of \eqref{RT11}}
& \le 
\Big\|  \jb{\tau}^{-b} \jb{\tau_2}^{-b} 
\big\|
\| \Hb_1^{\bf N} (n,n_2,\tau, \tau_2) \|_{\l^2_{n_2} \to \l^2_n} 
\big\|_{L^p (\O)} 
\Big\|_{L^2_{\tau, \tau_2}} \\
& \les 
\sup_{\tau, \tau_2 \in \R}
\Big\|
\| \Hb_1^{\bf N} (n,n_2,\tau, \tau_2) \|_{\l^2_{n_2} \to \l^2_n} 
\Big\|_{L^p (\O)} \\
& \les p (N_1N_3)^{-\frac 14}
\end{split}
\label{RT13}
\end{align}

\noi
\noi 
for small $s >  0$.
This proves \eqref{RT11}.

\medskip

\noi 
$\bullet$ {\bf Case 2:}
 $N_2 \gg \max (N_1, N_3)$. 
\\
\indent
Let $\TT_{12}^{N_1,N_3}$ denote the contribution
from this case 
to $\TT_{1}^{N_1,N_3}$ defined in \eqref{RT1a}.
Let $\phi(\bar n)$ be as in \eqref{phi1}.
Then, we have 
$\phi (\bar n) \sim N_2^2$
in this case.
%
%
%\vspace{1cm}
%
%Let $\TT_{2a}^{N_2,N_3}$
% denote the contribution
%from this subcase 
%to $\TT_{2}^{N_2,N_3}$ defined in \eqref{RA1a}, 
%and we directly prove
%\eqref{RA2} for $\TT_{2a}^{N_2,N_3}$.
%In this subcase, we have 
%\[
%|\phi(\bar n)| = \big||n|^2 - |n_1|^2 + |n_2|^2 - |n_3|^2\big| \gg \max (N_2, N_3)^{a_1}. 
%\]
%
%\noi
Then, by the triangle inequality, 
we have 
\begin{align}
\max (\jb{\s}, \jb{\s_1},\jb{\s_2}, \jb{\s_3}) \ges N_2^{2}\gg 
\max (N_1, N_3)^2
\label{RT13x1}
\end{align}

\noi
under $\tau = \tau_1 - \tau_2 + \tau_3$, 
where 
\begin{align}
\s = \tau + |n|^2
\qquad \text{and} \qquad \s_j = \tau_j + |n_j|^2\quad  \text{for}\ \  j = 1,2,3.
\label{mod1}
\end{align}

%Note that, by a slight modification of the proof of 
%the trilinear estimate \eqref{tri1}, we have 

\noi
From \eqref{tri3}, we have 
\begin{align}
\begin{split}
& \big\|
\NN(\Q_{N_1} u_1,
  \Sb^\perp_{10 \max (N_1, N_3)}u_2,   \Q_{N_3}u_3)\big\|_{X^{s, - b' + 2\eps_1 }_T} \\
& \quad %\hphantom{XXXXXXX}
\les 
\|u_2\|_{X^{s, b-\eps_1}_T}
 \prod_{j \in \{1, 3\} } \|\Q_{N_j} u_j\|_{X^{\eps_2, b - \eps_1}_T}
\end{split}
\label{RT13x2}
\end{align}

\noi
for any  $0 < T \le 1$, $s > 0$,  and $\eps_2 > 0$, 
provided that $\eps_1 > 0$ is sufficiently small.
Here, $\Sb^\perp_N$ is as in~\eqref{LP2} and $b$ and $b'$ are as in \eqref{b1}.
Then, from Lemma \ref{LEM:decay1}, \eqref{RT13x1},  and \eqref{RT13x2}, 
we obtain 
\begin{align*}
\|\TT_{12}^{N_1,N_3}(w)\|_{X^{s, - b'}_T}
\les T^{\eps_1} \max (N_1, N_3)^{-2\eps_1}
\|w\|_{X^{s, b}_T}
 \prod_{j \in \{1, 3\} } \|\Q_{N_j} z\|_{X^{\eps_2, b }_T}.
\end{align*}

\noi
Hence, by 
taking a supermum over $\|w\|_{X^{s, b}_T}\le 1$
and 
applying  the homogeneous linear estimate~\eqref{lin2}
(with \eqref{lin1})
and the Wiener chaos estimate (Lemma~\ref{LEM:hyp}), 
 we have
\begin{align*}
\Big\| \| \TT_{12}^{N_1,N_3} \|_{X^{s,b}_T \to 
X^{s,-b'}_T} \Big\|_{L^p (\O)}  
& \les 
T^{\eps_1} \max (N_1, N_3)^{-2\eps_1+ 4\eps_2}
\big\|\| u_0^\o\|_{H^{-\eps_2}}\big\|_{L^{2p} (\O)}^2  \\
& \les p
T^{\eps_1} \max (N_1, N_3)^{-2\eps_1+ 4\eps_2}.
\end{align*}

\noi
Therefore, 
by choosing  $0 < \eps_2 \ll \eps_1 < \eps \ll1$, 
we obtain \eqref{RT2} in this subcase.

\medskip

This concludes  the proof of Proposition \ref{PROP:RT1}.
\end{proof}

We conclude this section by presenting a proof of 
 Lemma \ref{LEM:RT1a}.

\begin{proof}[Proof of Lemma \ref{LEM:RT1a}]

Given dyadic $N, N_1, N_2, N_3 \ge 1$, write $\Hb_1^{\bf N}$
in \eqref{RT8} and \eqref{RT8a} 
as 
\begin{align}
\Hb_1^{\bf N}
 (n,n_2,\tau, \tau_2)  
= \sum_{m \in \Z} \Hb^{{\bf N}, (m)}_1 (n, n_2 )\ft \eta_{_T} (\tau + \tau_2 - m),   
\label{RTT1}
\end{align}

\noi
where $\Hb^{{\bf N}, (m)}_1$ is defined by 
\begin{align}
\begin{split}
 \Hb^{{\bf N}, (m)}_1 (n, n_2)
& =  \frac{\jb{n}^{s}}{ \jb{n_2}^{s}} 
\ind_{\substack{|n| \sim N\\|n_2|\sim N_2}}
\sum_{\substack{n + n_2 = n_1 + n_3\\ n \neq n_1,n_3}} 
\bigg(\prod_{j \in \{ 1, 3\}}
\frac{g_{n_j} }{\jb{n_j}}
\ind_{|n_j|\sim N_j}\bigg)
\ind_{ \phi (\bar n) = m}\\
& =  \frac{\jb{n}^{s}}{ \jb{n_2}^{s}} 
\sum_{n_1, n_3 \in \Z^2} 
%\prod_{j \in \{ 1, 3\}}
%\frac{1 }{\jb{n_j}}\cdot
h^{{\bf N}, (m)}(n, n_1, n_2, n_3)
%\prod_{j \in \{ 1, 3\}}
\frac{g_{n_1} g_{n_3}}{\jb{n_1}\jb{n_3}}.
\end{split}
\label{RTT2}
\end{align}

\noi
Here,  $h^{{\bf N}, (m)}$ is the base tensor defined in \eqref{baseT}. 
Hence, from \eqref{RTT1}, 
$\ft \eta_{_T}(\tau) = T \ft \eta (T\tau)$, 
Lemma~\ref{LEM:RT}\,(i) with \eqref{RTT2}, and Lemma \ref{LEM:basetensor}, we have
\[
\begin{split} 
& \sup_{\tau,\tau_2\in \R} \Big\|  \| 
 \ind_{Q_0} (n) \ind_{Q_2} (n_2)
\Hb_1^{\bf N} (n,n_2,\tau,\tau_2) \|_{\l^2_{n_2} \to \l^2_n} \Big\|_{L^p(\O)} \\
& \quad \le \sup_{\tau,\tau_2\in \R} \sum_{m \in \Z}
|\ft \eta_{_T} (\tau + \tau_2 - m)| 
\Big\| \|   \ind_{Q_0} (n) \ind_{Q_2} (n_2) \Hb_1^{{\bf N}, (m)} (n,n_2) 
\|_{\l^2_{n_2} \to \l^2_n} \Big\|_{L^p(\O)}  \\
&\quad \les p  \max(N_1, N_3)^\eps N^s N_1^{-1}N_2^{-s}  N_3^{-1} 
\sup_{m \in \Z}
\max_{(B, C)}  \|  h^{{\bf N}, (m)}  \|_{n_2 n_B  \to n n_C},  \\
& \quad \les p 
 (N_1N_3)^{-\frac12 + s+  2\eps}, 
\end{split}
\]

\noi
where 
%$N_{\max}$ is as in \eqref{ord1} and 
$(B, C)$ is a partition of $\{1, 3\}$.
Namely, the maximum is taken over 
\[n_1 n_2 n_3  \to n , \qquad  
n_2 n_3 \to n n_1, 
\qquad n_1 n_2 \to nn_3, 
\qquad \text{and}\qquad 
n_2 \to nn_1 n_3. \]

\noi
This proves  \eqref{RT9}.
\end{proof}

\section{Random tensor term II}
\label{SEC:RT2}

In this section, we estimate
the  random tensor term:
% $\NN(w, z, z)$. 
%Define the random operator $\TT_2$
%by setting
\begin{align}\label{RA0}
\TT_2(w) =  \NN (w,z, z), 
\end{align}

\noi
where $z$ denotes the random linear solution in \eqref{lin1}.
Then, our main goal is to prove the following proposition.

\begin{proposition}
\label{PROP:RT2}
Given small   $s > 0$ and $\eps > 0$, 
 there exists $\ta > 0$
such that 
\begin{align*}
\Big\| \| \TT_2 \|_{X^{s,b}_T \to 
X^{s,-b'}_T} \Big\|_{L^p (\O)} 
\les p  T^\ta
%\label{RA0a}
\end{align*}

\noi
for any finite $p \ge 1$ and $0 < T\ll 1$, where 
$b = \frac 12 + \eps$ and $b' = \frac 12 - 2\eps$
are as in \eqref{b1}.
In particular, 
 the conclusion of Proposition \ref{PROP:1}
holds
for the random tensor term $\NN(w, z, z)$.

\end{proposition}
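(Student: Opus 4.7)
I will mirror the structure of the proof of Proposition~\ref{PROP:RT1}. First, I would dyadically decompose
\begin{equation*}
\TT_2(w) = \sum_{\substack{N_2, N_3 \ge 1 \\ \text{dyadic}}} \TT_2^{N_2, N_3}(w), \qquad
\TT_2^{N_2, N_3}(w) = \NN(w, \Q_{N_2} z, \Q_{N_3} z),
\end{equation*}
and reduce the claim to the pointwise bound $\bigl\| \|\TT_2^{N_2, N_3}\|_{X^{s, b}_T \to X^{s, -b'}_T} \bigr\|_{L^p(\O)} \les p T^\ta (N_2 N_3)^{-\delta}$ for some small $\ta, \delta > 0$. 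Following \eqref{RT3}--\eqref{RT6}, I would Fourier-analyze $\eta_T \TT_2^{N_2, N_3}(w)$ to extract a random kernel $\Hb_2^{N_2, N_3}(n, n_1, \tau, \tau_1)$ involving the product $\cj{g_{n_2}} g_{n_3}$ (with $g_{n_2}$ conjugated, since $z$ sits in the middle, conjugated slot of $\NN$), and use Cauchy--Schwarz to bound the operator norm by the $L^2_{\tau, \tau_1}$-norm of $\jb{\tau}^{-b'} \jb{\tau_1}^{-b} \|\Hb_2^{N_2, N_3}\|_{\ell^2_{n_1} \to \ell^2_n}$. I would further localize $\Hb_2^{N_2, N_3} = \sum_{N, N_1} \Hb_2^{\bf N}$ via Littlewood--Paley and split into two cases.

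Case 1 ($\max(N_2, N_3) \gtrsim N_1$): Here $n = n_1 - n_2 + n_3$ forces $\max(N, N_1) \les \max(N_2, N_3)$, so the sum over $(N, N_1)$ has $\les \log^2 \max(N_2, N_3)$ dyadic values. I would apply Lemma~\ref{LEM:RT}\,(i) with random index set $A = \{2, 3\}$, combined with the tensor-norm bounds of Lemma~\ref{LEM:basetensor} over the four partitions of $\{2,3\}$ (yielding the norms $\|h^{{\bf N}, (m)}\|_{n_1 n_B \to n n_C}$ for $(B,C)$ a partition of $\{2,3\}$), to establish an analog of Lemma~\ref{LEM:RT1a} with pointwise bound $p (N_2 N_3)^{-\frac{1}{2} + s + \eps}$. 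The Schur-type and interpolation arguments (parallel to \eqref{RT10}--\eqref{RT13}) then yield the desired decay.

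Case 2 ($N_1 \gg \max(N_2, N_3)$, and thus $N \sim N_1$) is the main obstacle. A direct computation (using $n_1 - n = n_2 - n_3$) gives
\begin{equation*}
\phi(\bar n) = |n|^2 - |n_1|^2 + |n_2|^2 - |n_3|^2 = 2 (n_3 - n_2) \cdot (n_1 - n_2),
\end{equation*}
which, in sharp contrast to Case 2 of Proposition~\ref{PROP:RT1} (where the conjugated placement of $w$ yields $\phi \sim N_2^2 \gg \max(N_1, N_3)^2$ and hence a clean modulation gain), can vanish whenever the two vectors are orthogonal. Consequently no direct modulation gain is available.

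To close Case 2, I would invoke Lemma~\ref{LEM:RT}\,(iii) in the large-outer-frequency regime $|n|, |n_1| \sim N_1 \gg \max(N_2, N_3)$: setting $b_0 = n_1$, $c_0 = n$, and using the frequency shift $m \in \Z^2$ to factor out the $N_1$-scale, the relevant tensor norms of the shifted kernel $\wt{h}^m$ are controlled in terms of $\max(N_2, N_3)$ only, so the $N^\theta$ loss of Lemma~\ref{LEM:RT}\,(i) is replaced by the harmless $\max(N_2, N_3)^\theta$. Combined with orthogonality of the Littlewood--Paley pieces $\{\Q_{N_1} w\}_{N_1}$ in the output (valid because $N \sim N_1$ forces essentially disjoint output supports across $N_1$), and supplemented, where needed, by the deterministic trilinear estimate \eqref{tri3} with $j_* = 1$ applied to $\Sb^\perp_{10 \max(N_2, N_3)} w$ together with Lemma~\ref{LEM:decay1}, the homogeneous linear estimate~\eqref{lin2}, and the Wiener chaos estimate (Lemma~\ref{LEM:hyp}) to control $\|\Q_{N_j} z\|_{X^{\eps_2, b}_T} \les N_j^{\eps_2}$, I expect to extract the required $(N_2 N_3)^{-\delta}$ decay with the choice $0 < \eps_2 \ll \eps_1 \ll \eps$.
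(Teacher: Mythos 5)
Your structure and Case~1 track the paper's proof of Proposition~\ref{PROP:RT2} closely, and you correctly identify the central obstruction: with $w$ in the first (unconjugated) slot, $\phi(\bar n) = 2(n_3-n_2)\cdot(n_1-n_2)$ can vanish, so the assumption $N_1 \gg \max(N_2,N_3)$ yields no modulation lower bound. You also correctly identify Lemma~\ref{LEM:RT}\,(iii) as the right tool in Case~2.

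However, your treatment of Case 2 has a genuine gap. Lemma~\ref{LEM:RT}\,(iii) inherits hypothesis (ii.b), which with $(b_0,c_0)=(n_1,n)$ and $N = \max(N_2,N_3)$ demands $\big||n_1|^2-|n|^2\big| \les \max(N_2,N_3)^{a_1}$. In Case~2, $\big||n_1|^2-|n|^2\big|$ can be as large as $N_1\cdot\max(N_2,N_3) \gg \max(N_2,N_3)^2$, so the random tensor estimate cannot be invoked on all of Case~2. The paper therefore splits further: Subcase~2.a, $\big||n|^2-|n_1|^2\big|\gg\max(N_2,N_3)^{a_1}$ (for some fixed $a_1>2$), forces $|\phi(\bar n)| \gg \max(N_2,N_3)^{a_1}$ since $\big||n_2|^2-|n_3|^2\big|\les\max(N_2,N_3)^2$; this modulation lower bound, combined with Lemma~\ref{LEM:decay1} and \eqref{tri3}, yields the decay $\max(N_2,N_3)^{-a_1\eps_1+4\eps_2}$. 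Subcase~2.b is the complementary regime where (ii.b) holds and Lemma~\ref{LEM:RT}\,(iii) applies. Your ``supplement'' via \eqref{tri3} with $\Sb^\perp_{10\max(N_2,N_3)}w$ and Lemma~\ref{LEM:decay1} is not enough: restricting $w$ to high frequencies gives no modulation lower bound when $w$ sits in slot~1 (this is precisely your own observation that $\phi$ can vanish), and without one the combination of Lemma~\ref{LEM:decay1} and \eqref{tri3} produces only $T^{\eps_1}(N_2N_3)^{+\eps_2}$, the wrong sign in $N_2N_3$. The $\Sb^\perp$-driven modulation gain used in Case~2 of Proposition~\ref{PROP:RT1} relied crucially on $w$ occupying the conjugated slot, which forces $|\phi|\gtrsim N_2^2$; here the correct splitting parameter is instead the size of $\big||n|^2-|n_1|^2\big|$.
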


\begin{proof}

Given dyadic $N_2, N_3 \ge 1$, 
set 
\begin{align}
\TT_2^{N_2,N_3} (w) =  \NN(w, \Q_{N_2} z, \Q_{N_3} z), 
\label{RA1a}
\end{align}

\noi
where $\Q_{N_j}$ denotes the Littlewood-Paley projector.
Then, as in the proof of Proposition \ref{PROP:RT1}, 
it suffices to prove that 
there exist small $\ta, \dl > 0$ such that 
\begin{align}
\Big\| \| \TT_2^{N_2,N_3} \|_{X^{s,b}_T \to 
X^{s,-b'}_T} \Big\|_{L^p (\O)} 
\les p  T^\ta (N_2N_3)^{-\dl} 
\label{RA2}
\end{align}

\noi
for any finite $p \ge 1$ and $0 < T\ll 1$.

Proceeding as in \eqref{RT3} with 
\eqref{RA1a}, \eqref{RA0},  and \eqref{non1}, we have 
\begin{align*}
%\begin{split}
 \F_{x, t} &
\big(\eta_{_T} \TT_2^{N_2,N_3} (w)\big)
 (n,\tau -|n|^2) \\
&= \jb{n}^{-s} \sum_{n_1\in \Z^2 } \jb{n_1}^s 
\int_\R 
\Hb_2^{N_2, N_3} (n,n_1,\tau, \tau_1) 
 \ft w(n_1, \tau_1 - |n_1|^2) 
d\tau_1,
%\end{split}
%\label{RA3}
\end{align*} 

\noi
where 
\begin{align}
\begin{split}
& \Hb_2^{N_2, N_3} (n,n_1,\tau, \tau_1)  \\
& \quad =  \frac{\jb{n}^{s}}{ \jb{n_1}^{s}} 
\sum_{\substack{n - n_1 = - n_2 + n_3\\ n \neq n_1,n_3}} 
\bigg(\prod_{j = 2}^3 
\frac{g^*_{n_j} }{\jb{n_j}}
\ind_{|n_j|\sim N_j}\bigg)
\ft \eta_{_T} (\tau - \tau_1 - \phi (\bar n))  
\end{split}
\label{RA4}
\end{align}

\noi
with  $\phi (\bar n)$ as in \eqref{phi1}
and $g^*_{n_j}$ as in \eqref{g1}.
Then, 
by repeating the computations in 
\eqref{RT5} and~\eqref{RT6}, 
we see that~\eqref{RA2} follows once we prove
\begin{align}
\Big\| \big\| \jb{\tau}^{-b'} \jb{\tau_1}^{-b} \| \Hb_2^{N_2, N_3}  (n,n_1,\tau, \tau_1) \|_{\l^2_{n_1} \to \l^2_n} \big\|_{L^2_{\tau, \tau_1}} \Big\|_{L^p (\O)} 
 \les p  T^\ta (N_2N_3)^{-\dl} .
\label{RA7}
\end{align}

Given dyadic $N, N_1, N_2, N_3 \ge 1$, we set 
\begin{align*}
%\begin{split}
 \Hb_2^{{\bf N}} (n,n_1,\tau, \tau_1)  
& = \Hb_2^{N, N_1, N_2,  N_3} (n,n_1,\tau, \tau_1)  \\
& = \ind_{|n|\sim N} \ind_{|n_1|\sim N_1}
\Hb_2^{N_2, N_3} (n,n_1,\tau, \tau_1).
%\end{split}
%\label{RA8}
\end{align*}

\noi
Then, by writing 
\begin{align}
\Hb_2^{N_2, N_3}
= 
\sum_{\substack{N, N_1 \ge 1, \text{ dyadic}\\ \max (N_2, N_3) \ges N_1}} 
\Hb_2^{\bf N}
+ 
\sum_{\substack{N, N_1 \ge 1, \text{ dyadic}\\ N_1 \gg \max (N_2, N_3)}} 
\Hb_2^{\bf N}, 
\label{RA8b}
\end{align}

\noi
 we separately estimate
the first and second terms on the right-hand side of~\eqref{RA8b}.

\medskip

\noi 
$\bullet$ {\bf Case 1:}
 $\max (N_2, N_3) \ges N_1$. 
\\
\indent
In this case, a straightforward modification of the argument in 
Case 1 of  the proof of Proposition \ref{PROP:RT1}
yields \eqref{RA7}, and thus we omit details.

\medskip

\noi 
$\bullet$ {\bf Case 2:}
 $N_1 \gg \max (N_2, N_3)$. 
\\
\indent
In this case, we have $N \sim N_1
\gg \max (N_2, N_3)$.
Unlike Case 2 in the proof of Proposition~\ref{PROP:RT1}, 
there is no effective lower bound on $|\phi(\bar n)|$.
We separately consider the following subcases:

\smallskip

\begin{itemize}
\item[(2.a)]
$\big| |n|^2 - |n_1|^2 \big| \gg \max (N_2, N_3)^{a_1}$,

\smallskip

\item[(2.b)]
$\big| |n|^2 - |n_1|^2 \big| \les \max (N_2, N_3)^{a_1}$

\end{itemize}

\smallskip

\noi
for some fixed  $a_1 > 2$.
%\ge 1$
%as in Lemma \ref{LEM:RT}\,(iii) (and Lemma \ref{LEM:DH}).

%\gg 1$ is 
%
%$a_1 > 2$.

\medskip

\noi
$\pmb\circ$ {\bf Subcase 2.a:}
$\big| |n|^2 - |n_1|^2 \big| \gg \max (N_2, N_3)^{a_1}$.
\\
\indent
Let $\TT_{2a}^{N_2,N_3}$
 denote the contribution
from this subcase 
to $\TT_{2}^{N_2,N_3}$ defined in \eqref{RA1a}.
We will directly prove
\eqref{RA2} for $\TT_{2a}^{N_2,N_3}$.
In this subcase, we have 
\[
|\phi(\bar n)| = \big||n|^2 - |n_1|^2 + |n_2|^2 - |n_3|^2\big| \gg \max (N_2, N_3)^{a_1}. 
\]

\noi
Then, by the triangle inequality, 
we have 
\begin{align}
\max (\jb{\s}, \jb{\s_1},\jb{\s_2}, \jb{\s_3}) \gg \max (N_2, N_3)^{a_1}
\label{RA9}
\end{align}

\noi
under $\tau = \tau_1 - \tau_2 + \tau_3$, 
where $\s$ and $\s_j$ are as in \eqref{mod1}.
Then, proceeding as in Case 2 of the proof of 
 Proposition \ref{PROP:RT1}
 with \eqref{RA9}, we have
\begin{align*}
\Big\| \| \TT_{2b}^{N_2,N_3} \|_{X^{s,b}_T \to 
X^{s,-b'}_T} \Big\|_{L^p (\O)}  
& \les 
T^{\eps_1} \max (N_2, N_3)^{-a_1\eps_1+ 4\eps_2}
\big\|\| u_0^\o\|_{H^{-\eps_2}}\big\|_{L^{2p} (\O)}^2  \\
& \les p
T^{\eps_1} \max (N_2, N_3)^{-a_1\eps_1+ 4\eps_2}.
\end{align*}

\noi
Therefore, 
by choosing  $0 < \eps_2 \ll \eps_1 < \eps \ll1$, 
we obtain \eqref{RA2} in this subcase.

\medskip

\noi
$\pmb\circ$ {\bf Subcase 2.b:}
$\big| |n|^2 - |n_1|^2 \big| \les \max (N_2, N_3)^{a_1}$.
\\
\indent
Let  $ \Hb_{2b}^{N_2, N_3}$ denote the contribution
from  this subcase
to $\Hb_2^{N_2, N_3}$ in \eqref{RA4}, namely,  
\begin{align}
\begin{split}
 \Hb_{2b}^{N_2, N_3}(n,n_1,\tau, \tau_1)
&   = 
 \frac{\jb{n}^{s}}{ \jb{n_1}^{s}} \sum_{\substack{N, N_1 \ge 1, \text{ dyadic}\\ N_1 \gg \max (N_2, N_3)\\
| |n|^2 - |n_1|^2 | \les \max (N_2, N_3)^{a_1}}} 
\ind_{|n|\sim N} \ind_{|n_1|\sim N_1}
\\
%\Hb_2^{\bf N}(n,n_1,\tau, \tau_1)\\
& \hphantom{XX}
\sum_{\substack{n - n_1 = - n_2 + n_3\\ n \neq n_1,n_3}} 
\bigg(\prod_{j = 2}^3 
\frac{g^*_{n_j} }{\jb{n_j}}
\ind_{|n_j|\sim N_j}\bigg)
\ft \eta_{_T} (\tau - \tau_1 - \phi (\bar n))  .
\end{split}
\label{RA12}
\end{align}

\noi
Note that, under  the condition $ N_1\gg \max(N_2, N_3)$,
 the condition $n\ne n_3$ (i.e.~$n_1 \ne n_2$)
in~\eqref{RA12} is vacuous.
Since we have $|n| \sim |n_1|$ on the support of 
 $\Hb_{2b}^{N_2, N_3}$, we have 
\begin{align}
 \|  \Hb_{2b}^{N_2, N_3}  (n,n_1,\tau, \tau_1) \|_{\l^2_{n_1} \to \l^2_n} 
 \les
  \|  \wt \Hb_{2b}^{N_2, N_3}  (n,n_1,\tau, \tau_1) \|_{\l^2_{n_1} \to \l^2_n}
\label{RA13a}
\end{align}

\noi
for each $\tau, \tau_2 \in \R$, 
where 
$\wt  \Hb_{2b}^{N_2, N_3}$ is defined by 
\begin{align*}
\wt  \Hb_{2b}^{N_2, N_3}
(n,n_1,\tau, \tau_1)
= \frac{\jb{n_1}^s}{\jb{n}^s} \Hb_{2b}^{N_2, N_3}
(n,n_1,\tau, \tau_1).
\end{align*}

\noi
Then, 
by writing 
\begin{align*}
\wt  \Hb_{2b}^{N_2, N_3}
(n,n_1,\tau, \tau_1)
 =  \sum_{\substack{n_2, n_3 \in \Z^2\\ |n_2|\sim N_2\\|n_3|\sim N_3}}
\wt h^{\tau, \tau_1}_{n, n_1, n_2, n_3}
\cj{g_{n_2}}g_{n_3}, 
\end{align*}

\noi
we see that, 
for fixed $\tau$ and $\tau_1$, 
the tensor 
$\wt h^{\tau, \tau_1}_{n, n_1, n_2, n_3}$
depends only on $n - n_1$, $\big||n|^2 - |n_1|^2 \big|$, 
and $(n_2, n_3)$
with the restriction $|n|, |n_1| \gg \max(N_2, N_3)$
on the input and output frequencies, 
to which Lemma \ref{LEM:RT}\,(iii) can be applied.

By an interpolation argument
 as in 
Case 1 of the proof of Proposition \ref{PROP:RT1}
and \eqref{RA13a}, 
we claim that~\eqref{RA7} in this subcase follows
once we prove
\begin{align}
\Big\| \big\| \jb{\tau}^{-b} \jb{\tau_1}^{-b}
 \| \wt  \Hb_{2b}^{N_2, N_3}  (n,n_1,\tau, \tau_1) \|_{\l^2_{n_1} \to \l^2_n} \big\|_{L^2_{\tau, \tau_1}} \Big\|_{L^p (\O)} 
 \les p   (N_2N_3)^{-\dl} .
\label{RA14}
\end{align}

\noi
where the power on $\jb{\tau}$ is replaced by $-b$.

As in \eqref{RTX1}, we define $  \wt \Hb_2^{N_2, N_3, m}$ by setting
\begin{align*}
\wt \Hb_2^{N_2, N_3, m}(n' ,n_1 ',\tau, \tau_1) 
=   \wt \Hb_2^{N_2, N_3}(n'+m ,n_1'+m ,\tau, \tau_1) 
\cdot   \ind_{\substack{|n'|\les  \max (N_2, N_3)\\|n_1'|\les  \max (N_2, N_3)}}.
\end{align*}

\noi
Then, 
arguing 
as in the proof of Lemma \ref{LEM:RT}\,(iii)
with Lemma \ref{LEM:DH}
(see \eqref{XY6}), 
we see that there exists $J =O(\max (N_2, N_3)^{a_2})$
and $\{m_j \}_{j = 1}^J\subset \Z^2$
with $|m_j| \gg \max (N_2, N_3)$ 
such that 
\begin{align}
\begin{split}
\Big\| & \big\| \jb{\tau_1}^{-b} 
\|  \wt \Hb_2^{N_2, N_3}(n,n_1,\tau, \tau_1) \|_{\l^2_{n_1} \to \l^2_{n}} \big\|_{L^2_{\tau , \tau_1}} \Big\|_{L^p (\O)} \\
& \les \Big\|  \big\| \jb{\tau_1}^{-b} 
\sup_{m \in \Z^2} \|\wt   \Hb_2^{N_2, N_3, m}(n',n_1' ,\tau, \tau_1) 
\|_{\l^2_{n_1'} \to \l^2_{n'}} \big\|_{L^2_{\tau , \tau_1}} \Big\|_{L^p (\O)} \\
& \les \Big\|  \big\| \jb{\tau_1}^{-b} 
 \sup_{|m| \les \max (N_2, N_3)} \| \wt  \Hb_2^{N_2, N_3, m}(n ',n_1' ,\tau, \tau_1) 
\|_{\l^2_{n_1'} \to \l^2_{n'}} \big\|_{L^2_{\tau , \tau_1}} \Big\|_{L^p (\O)} \\
& \quad 
+ \Big\|  \big\| \jb{\tau_1}^{-b} 
\sup_{j  = 1, \dots, J} \| \wt  \Hb_2^{N_2, N_3, m_j}(n ',n_1' ,\tau, \tau_1) 
\|_{\l^2_{n_1'} \to \l^2_{n'}} \big\|_{L^2_{\tau , \tau_1}} \Big\|_{L^p (\O)} \\
& \les
\max (N_2, N_3)\\
& \quad \quad 
\times  \sup_{|m| \les  \max (N_2, N_3)}
 \Big\| \big\| \jb{\tau_1}^{-b} 
\| \wt  \Hb_2^{N_2, N_3, m} (n',n_1',\tau, \tau_1) \|_{\l^2_{n_1', n'}} \big\|_{L^2_{\tau, \tau_1}} \Big\|_{L^p (\O)}\\
& \quad 
+ 
\max (N_2, N_3)^\frac{a_2}{2}
\sup_{j  = 1, \dots, J}
 \Big\| \big\| \jb{\tau_1}^{-b} 
\| \wt  \Hb_2^{N_2, N_3, m_j} (n',n_1',\tau, \tau_1) \|_{\l^2_{n_1', n'}} \big\|_{L^2_{\tau, \tau_1}} \Big\|_{L^p (\O)}
\end{split}
\label{RA14a}
\end{align}

\noi
for any $p \ge 2$.
Then, proceeding as in \eqref{RT12}, 
we obtain
\begin{align}
\Big\|  \big\| \jb{\tau_1}^{-b} 
\| \wt  \Hb_2^{N_2, N_3}(n,n_1,\tau, \tau_1) \|_{\l^2_{n_1} \to \l^2_n} \big\|_{L^2_{\tau , \tau_1}} \Big\|_{L^p (\O)} 
 \les p T^\frac 12  \max(N_2,N_3)^{a_3}
\label{RA15}
\end{align}

\noi 
for some $a_3 \gg 1$.
Then, \eqref{RA7} in this subcase follows from interpolating \eqref{RA14} and \eqref{RA15}. 

It remains to prove \eqref{RA14}.
Since $b > \frac 12$, we can proceed as in 
\eqref{RT13}
by applying 
Lemma~\ref{LEM:RT}\,(iii)
(instead of 
Lemma~\ref{LEM:RT}\,(i)
used in 
the proof of Lemma \ref{LEM:RT1a} presented at the end of Section \ref{SEC:RT1}).
This proves \eqref{RA14}
(with $\dl = \frac 12 - \eps$).

\medskip

This concludes  the proof of Proposition \ref{PROP:RT2}.
\end{proof}

\section{Random tensor term III}
\label{SEC:RT3}

In this section, we estimate
the  random tensor term:
\begin{align}\label{RB0}
\TT_3(w_2, w_3) =  \NN (z, w_2,w_3), 
\end{align}

\noi
where $z$ denotes the random linear solution in \eqref{lin1}.
Then, our main goal is to prove the following proposition.

\begin{proposition}
\label{PROP:RT3}
Given small   $s > 0$ and $\eps > 0$, 
 there exists $\ta > 0$
such that 
\begin{align*}
\Big\| \| \TT_3 \|_{X^{s,b}_T\times X^{s,b}_T  \to X^{s,-b'}_T} 
\Big\|_{L^p (\O)} 
\les p^\frac 12   T^\ta
%\label{RB0a}
\end{align*}

\noi
for any finite $p \ge 1$ and $0 < T\ll 1$, where 
$b = \frac 12 + \eps$ and $b' = \frac 12 - 2\eps$
are as in \eqref{b1}.
In particular, 
 the conclusion of Proposition \ref{PROP:1}
holds
for the random tensor term $\NN(z, w, w)$.

\end{proposition}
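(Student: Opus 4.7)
The plan is to follow the strategy of Propositions \ref{PROP:RT1} and \ref{PROP:RT2}, adapted to the bilinear setting in $(w_2, w_3)$. First, I would dyadically decompose $z = \sum_{N_1 \ge 1}\Q_{N_1} z$, set $\TT_3^{N_1}(w_2, w_3) = \NN(\Q_{N_1} z, w_2, w_3)$, and reduce matters to proving
\[
\Big\|\|\TT_3^{N_1}\|_{X^{s,b}_T \times X^{s,b}_T \to X^{s,-b'}_T}\Big\|_{L^p(\O)} \les p^{\frac{1}{2}} T^\ta N_1^{-\dl}
\]
for some $\ta, \dl > 0$, uniformly in dyadic $N_1 \ge 1$, to be then summed in $N_1$. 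The factor $p^{\frac12}$ (rather than $p$) reflects the fact that the random linear solution $z$ enters only once, so the relevant Wiener chaos is of order one. Chebyshev's inequality then converts this $L^p$ bound into the exceptional-set statement of Proposition~\ref{PROP:1}.

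Since $\TT_3^{N_1}$ is bilinear in $(w_2, w_3)$, I would invoke the tensor-product reformulation of Remark~\ref{REM:tensor}, treating it as a linear operator from $\l^2_{n_2}\otimes\l^2_{n_3} \simeq \l^2_{n_2,n_3}$ into $\l^2_n$. Computing the Fourier kernel as in \eqref{RT3}--\eqref{RT4}, one identifies a random tensor $\Hb_3^{N_1}(n, n_2, n_3, \tau, \tau_2, \tau_3)$ carrying the single Gaussian factor $g_{n_1}/\jb{n_1}$ with $n_1 = n + n_2 - n_3$. Two successive applications of Cauchy--Schwarz (in $\tau_2$ and $\tau_3$), combined with the $X^{s,b}$-structure as in \eqref{RT5}--\eqref{RT6}, then reduce matters to an $L^2_{\tau, \tau_2, \tau_3}$-weighted bound on $\|\Hb_3^{\bf N}\|_{\l^2_{n_2, n_3} \to \l^2_n}$, where $(N, N_2, N_3)$ denote the dyadic localizations of the output and the two $w$-inputs. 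The random tensor estimate (Lemma~\ref{LEM:RT}(i)) with $k=1$ (internal index set $A = \{1\}$ labelling $n_1$) controls this operator norm by the maximum of $\|h^{{\bf N}, (m)}\|_{n_2 n_3 \to n n_1}$ and $\|h^{{\bf N}, (m)}\|_{n_1 n_2 n_3 \to n}$ (incorporating the weights $\jb{n}^s/(\jb{n_1}\jb{n_2}^s\jb{n_3}^s)$), up to a $p^{\frac12} N_1^\theta$ loss; these base-tensor norms are bounded by Lemma~\ref{LEM:basetensor}.

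The main case split is between $N_1 \les \max(N_2, N_3)$ and $N_1 \gg \max(N_2, N_3)$. In the first regime, the relation $n = n_1 - n_2 + n_3$ forces $N \les \max(N_2, N_3)$, and the counting bounds yield decay in $\max(N_2, N_3) \ges N_1$, analogous to Case~1 in the proof of Proposition~\ref{PROP:RT1}. In the second regime, $N \sim N_1$, and the estimate $\|h^{{\bf N}, (m)}\|_{n_2 n_3 \to n n_1} \les \min(N, N_1)^{\frac12} \min(N_2, N_3)^{\frac12}$ from Lemma~\ref{LEM:basetensor}, combined with the weight $1/N_1$ from the Gaussian, yields the key factor $N_1^{-\frac12 + s}$, which dominates the $N_1^\theta$ loss provided $\theta < s$. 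Should a sub-range of this regime resist this direct approach, I would further split based on $\bigl||n|^2 - |n_1|^2\bigr|$ as in Subcases~2.a and~2.b of Proposition~\ref{PROP:RT2}, using either the refined trilinear estimate~\eqref{tri3} with modulation gain from~\eqref{RT13x1}, or a version of Lemma~\ref{LEM:RT}(iii) applied to the diagonal $(n, n_1)$ after restricting to $|n|, |n_1| \ge 10 C_1 \max(N_2, N_3)$.

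The main obstacle, as in Propositions \ref{PROP:RT1} and~\ref{PROP:RT2}, is the careful trade-off in the dyadic summations $(N, N_2, N_3)$ once the weights and the $N_1^\theta$ loss are inserted, together with the interpolation step (as in \eqref{RT10}--\eqref{RT12}) between the sharp $L^p$ bound obtained with weight $\jb{\tau}^{-b}$ and a crude deterministic $L^\infty$ bound in order to replace $\jb{\tau}^{-b}$ by $\jb{\tau}^{-b'}$ while producing the required $T^\ta$ factor. If the alternative split via Subcase~2.b is needed, a secondary difficulty is verifying that the kernel depends on $(n, n_1)$ only through $n - n_1 = -n_2 + n_3$ and $|n|^2 - |n_1|^2$, so that the hypotheses of Lemma~\ref{LEM:RT}(iii) are met after the tensor-product reformulation, and that the partitions of $\{n_2, n_3\}$ arising in the maximum in~\eqref{XY1a} are all controlled by Lemma~\ref{LEM:basetensor}.
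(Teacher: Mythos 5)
Your overall reduction (dyadic decomposition of $z$, the tensor-product reformulation via Remark~\ref{REM:tensor}, Cauchy--Schwarz in $\tau_2,\tau_3$, and Lemma~\ref{LEM:RT}\,(i) on the base tensor $h^{{\bf N},(m)}$) matches the paper, and your computation in the regime $N_1\gg\max(N_2,N_3)$ (the paper's Case~3, where the $N_1^\ta$ loss is absorbed since all four frequencies are $\les N_1$) is essentially correct. The gap lies in the complementary regime $N_1\ll\max(N_2,N_3)$. Your fallback, ``Lemma~\ref{LEM:RT}\,(iii) applied to the diagonal $(n,n_1)$ after restricting to $|n|,|n_1|\ge 10C_1\max(N_2,N_3)$,'' cannot work: $n_1$ is the \emph{summed} index carrying the Gaussian $g_{n_1}$ (the $n_A$-variable of Lemma~\ref{LEM:RT}), not an input or output of the operator, so it cannot serve as one of the diagonal variables $b_0,c_0$; and in the hard sub-range that remains (WLOG $N\sim N_3\gg N_1\gg N_2$, after discarding $N_1\les\min(|n_2|,|n_3|)$ via~\eqref{tri3} and $|\phi(\bar n)|\gg N_1^2$ via modulation gain) one has $|n_1|\les N_1\ll\max(N_2,N_3)$, so your restriction is identically false. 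Your proposed split on $\big||n|^2-|n_1|^2\big|$ also does not discriminate there, since that quantity is always $\sim N^2\sim\max(N_2,N_3)^2\les\max(N_2,N_3)^{a_1}$.

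The correct diagonal is $(n,n_3)$: once $|\phi(\bar n)|\gg N_1^2$ is removed, $\big||n_1|^2-|n_2|^2\big|\les N_1^2$ holds automatically, hence $\big||n|^2-|n_3|^2\big|\les N_1^2$; after rebalancing the $\jb{n}^s/\jb{n_3}^s$ weight (using $|n|\sim|n_3|$) the kernel depends on $(n,n_3)$ only through $n-n_3=n_1-n_2$ and $|n|^2-|n_3|^2$, and one applies Lemma~\ref{LEM:RT}\,(iii) with $b_0=n_3$, $c_0=n$, $b_1=n_2$, the single summed index $n_1$, and the (nonvacuous) restriction $|n|,|n_3|\gg N_1$. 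Relatedly, your analogy is inverted: Case~1 of Proposition~\ref{PROP:RT1} is $\max(N_1,N_3)\ges N_2$, i.e.\ the regime where the Gaussian frequencies dominate, and for $\NN(z,w,w)$ the analogue is $N_1\ges\max(N_2,N_3)$, which is your \emph{second} regime. In your first regime the output $n$ and the input $n_3$ range over $\sim(\max(N_2,N_3)/N_1)^2$ cubes of side $N_1$, so a naive application of Lemma~\ref{LEM:RT}\,(i) would incur a loss in $\max(N_2,N_3)$ rather than $N_1$; that is precisely the obstruction the $(n,n_3)$-diagonal Lemma~\ref{LEM:RT}\,(iii) argument is designed to overcome.
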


\begin{proof}
Given dyadic $N_1 \ge 1$, define
\begin{align}
\TT_3^{N_1} (w_2, w_3) =  \NN(\Q_{N_1} z, w_2, w_3).
\label{RB1}
\end{align}

\noi
Then, it suffices to prove that 
there exist 
 small $\ta, \dl > 0$ such that 
\begin{align}
\Big\| \| \TT_3^{N_1} 
\|_{X^{s,b}_T\times X^{s,b}_T  \to X^{s,-b'}_T} 
\Big\|_{L^p (\O)} 
\les p^\frac 12   T^\ta N_1^{-\dl} 
\label{RB2}
\end{align}

\noi
for any finite $p \ge 1$ and $0 < T\ll 1$.
In the following, we use $n_1$, $n_2$, and $n_3$
to denote
the spatial frequencies of $\Q_{N_1} z$, $w_2$, and $w_3$, 
respectively.

\medskip

\noi 
$\bullet$ {\bf Case 1:}
$N_1   \les \min(|n_2|, |n_3|)$.
\\
\indent
Let $\TT_{31}^{N_1}$ denote the contribution
from this case 
to $\TT_{3}^{N_1}$ defined in \eqref{RB1}.
Without loss of generality, assume that 
$N_1\les |n_2| \le |n_3|$.
Then, 
from Lemma \ref{LEM:decay1} and \eqref{tri3} 
with small $\eps_1, \eps_2 > 0$, we have 
\begin{align*}
\big\|
\NN( \Q_{N_1}  u_1, u_2,   u_3)\big\|_{X^{s, - b' }_T} 
& \les 
T^{\eps_1}
\sum_{\substack{N_2 \ge 1\\\text{dyadic}}}
\ind_{N_2 \ges N_1}
\big\|
\NN( \Q_{N_1} u_1, \Q_{N_2}u_2,   u_3)\big\|_{X^{s, - b'+\eps_1 }_T} \\
& 
\les 
T^{\eps_1}
\sum_{\substack{N_2 \ge 1\\\text{dyadic}}}
\ind_{N_2 \ges N_1}
 \|\Q_{N_1}u_1\|_{X^{\eps_2, b }_T}
  \|\Q_{N_2}u_2\|_{X^{\eps_2, b }_T}
\|u_3\|_{X^{s, b}_T}\\
& 
\les T^{\eps_1} 
N_1^{-\eps_2}
\|u_1\|_{X^{- \eps_2, b }_T}
  \|u_2\|_{X^{s, b }_T}
\|u_3\|_{X^{s, b}_T}
\end{align*}

\noi
for any  $0 < T \le 1$, 
provided that $s \ge  4\eps_2$.
In particular, we have 
\begin{align*}
\|\TT_{31}^{N_1}(w_2, w_3)\|_{X^{s, - b'}_T}
\les T^{\eps_1} 
N_1^{-\eps_2}
 \| z\|_{X^{-\eps_2, b }_T}
\|w_2\|_{X^{s, b}_T}
\|w_3\|_{X^{s, b}_T}
\end{align*}

\noi
Hence, by 
taking a supermum over $\|w_j\|_{X^{s, b}_T}\le 1$, $j = 2, 3$, 
and applying  the homogeneous linear estimate~\eqref{lin2}
(with \eqref{lin1})
and the Wiener chaos estimate (Lemma~\ref{LEM:hyp}), 
 we have
\begin{align*}
\Big\| \| \TT_{31}^{N_1}
\|_{X^{s,b}_T\times X^{s,b}_T  \to X^{s,-b'}_T} 
\Big\|_{L^p (\O)}  
& \les 
T^{\eps_1} 
N_1^{-\eps_2}
\big\|\| u_0^\o\|_{H^{-\eps_2}}\big\|_{L^{p} (\O)}\\
&  \les p^\frac 12 
T^{\eps_1}N_1^{-\eps_2}, 
\end{align*}

\noi
which yields 
\eqref{RB2} in this case.

\medskip

\noi 
$\bullet$ {\bf Case 2:}
$|\phi(\bar n)| \gg  N_1^2$.
\\
\indent
Let $\TT^{N_1}_{32}$ denote the contribution
from this case 
to $\TT_{3}^{N_1}$ defined in \eqref{RB1}.
Then, 
proceeding as in Case 2 of the proof of Proposition \ref{PROP:RT1}, 
we have
\begin{align*}
\|\TT_{32}^{N_1}(w_2, w_3)\|_{X^{s, - b'}_T}
\les T^{\eps_1} N_1^{-2\eps_1}
 \| z\|_{X^{-\eps_2, b }_T}
\|w_2\|_{X^{s, b}_T}
\|w_3\|_{X^{s, b}_T}.
\end{align*}

\noi
Then,  by 
taking a supermum over $\|w_j\|_{X^{s, b}_T}\le 1$, $j = 2, 3$, 
and applying  the homogeneous linear estimate~\eqref{lin2}
(with \eqref{lin1})
and the Wiener chaos estimate (Lemma~\ref{LEM:hyp}), 
 we obtain~\eqref{RB2} in this case.

\medskip

Before proceeding further, let us carry out 
a basic reduction.
From \eqref{RB1} with \eqref{RB0} and~\eqref{non1}, 
we have 
\begin{align*}
%\begin{split}
& \F \big(\eta_{_T}  
 \TT_{3}^{N_1}(w_2, w_3) \big)
 (n,\tau - |n|^2) \\ 
& \quad = \jb{n}^{-s}
 \sum_{n_2,n_3 \in \Z^2}
 \jb{n_2}^s  \jb{n_3}^s \int_{\R^2}
\Hb_3^{N_1} (n,n_2,n_3,\tau,\tau_2,\tau_3)  \\
& \hphantom{XXXXXXXX} \times
\cj{\ft w_2(n_2, \tau_2 - |n_2|^2)}  \ft w_3(n_3, \tau_3  - |n_3|^2) d\tau_2 d\tau_3\\
%\end{split}
%\label{RB3}
\end{align*}

\noi
where\footnote{Note that there is at most one term in the sum over $n_1$.} 
\begin{align}
\begin{split}
\Hb_3^{N_1} (n,n_2,n_3,\tau,\tau_2,\tau_3)  
& = 
\frac{\jb{n}^s}{\jb{n_2}^s \jb{n_3}^s} \\
& \quad \times  \sum_{\substack{n + n_2 - n_3 = n_1\\
n_1 \ne n,  n_2}}
\frac{g_{n_1}  }{\jb{n_1}}  \ind_{|n_1|\sim N_1}
 \ft \eta_{_T}  (\tau + \tau_2 - \tau_3 - \phi (\bar n))
\end{split}
\label{RB4}
\end{align}

\noi
with  $\phi (\bar n)$ as in \eqref{phi1}.
Arguing as in \eqref{RT5}, we see that \eqref{RB2}
follows once we prove 
\begin{align}
\begin{split}
& 
\Big\|
\big\|  \jb{\tau}^{-b'} \jb{\tau_2}^{-b} \jb{\tau_3}^{-b}
\| \Hb_3^{N_1}(n,n_2,n_3,\tau,\tau_2,\tau_3) \|_{\l^2_{n_2, n_3} \to \l^2_n}  \big\|_{L^2_{\tau,\tau_2,\tau_3}}
\Big\|_{L^p(\O)}\\
& \quad 
\les p^\frac 12   T^\ta N_1^{-\dl},  
\end{split}
\label{RB5}
\end{align}

\noi 
where $b$ and $b'$ are as in \eqref{b1}.  See also Remark \ref{REM:tensor}.

Given dyadic $N, N_1, N_2, N_3 \ge 1$, 
we set 
\begin{align}
\begin{split}
\Hb_3^{\bf N}(n,n_2,n_3,\tau,\tau_2,\tau_3)
& = 
\Hb_3^{N, N_1, N_2, N_3}(n,n_2,n_3,\tau,\tau_2,\tau_3)\\
& = \ind_{|n|\sim N}
\bigg(\prod_{j = 2}^3
\ind_{|n_j|\sim N_j}
\bigg)\Hb_3^{N_1}(n,n_2,n_3,\tau,\tau_2,\tau_3).
\end{split}
\label{RB6}
\end{align}

\noi
Then, by writing 
\begin{align}
\Hb_3^{N_1}
= 
\sum_{\substack{N, N_2, N_3 \ge 1, \text{ dyadic}\\ N_1 \ges \max (N_2, N_3)}} 
\Hb_3^{\bf N}
+ 
\sum_{\substack{N, N_2, N_3 \ge 1, \text{ dyadic}\\ N_1 \ll \max (N_2, N_3)}} 
\Hb_3^{\bf N}, 
\label{RB7}
\end{align}

\noi
 we separately estimate
the first and second terms on the right-hand side of~\eqref{RB7}
(after removing the contributions from Cases 1 and 2).

We now state a lemma whose proof is presented at the end of this section.

\begin{lemma}\label{LEM:RT3a}
Given dyadic $N, N_1, N_2, N_3 \ge 1$, 
suppose that $N_1 \ges N_{\med}$, where $N_{\med}$ is as in~\eqref{ord1}.
Let $Q_0$, $Q_2$, and $Q_3$
be cubes of side length $\les N_1$.
Fix small $s > 0$.
Then, we have 
\begin{align}
\begin{split}
&  \sup_{\tau,\tau_2,\tau_3\in \R} 
 \Big\| \| 
\ind_{Q_0} (n) \ind_{Q_2} (n_2) \ind_{Q_3} (n_3) \\
&  \hphantom{XXXXX}
\times
\Hb_3^{\bf N}(n,n_2,n_3,\tau,\tau_2,\tau_3) 
 \|_{\l^2_{n_2, n_3} \to \l^2_n} \Big\|_{L^p(\O)}  \les 
p^\frac 12  N_1^{- s + \eps}
\end{split}
\label{RB10}
\end{align}

\noi 
for any finite $p \ge 1$
and  dyadic $N, N_1,  N_2, N_3  \ge 1$, 
uniformly in $0 < T \ll 1$
and the cubes $Q_0$, $Q_2$, and $Q_3$.
\end{lemma}

\medskip

\noi 
$\bullet$ {\bf Case 3:}
$N_1\ges \max(N_2, N_3)$.
\\
\indent
We proceed by an interpolation argument as in Case 1 of the proof of Proposition \ref{PROP:RT1}.
Proceeding as in \eqref{RT12}
with 
\eqref{Z0}, \eqref{RB6}, \eqref{RB4}, and 
the Wiener chaos estimate (Lemma \ref{LEM:hyp}), we have 
\begin{align}
\begin{split}
& \Big\|  
\big\| 
\jb{\tau_2}^{-b} \jb{\tau_3}^{-b}  
\| \Hb_3^{\bf N} (n,n_2,n_3,\tau,\tau_2,\tau_3) \|_{\l^2_{n_2, n_3} \to \l^2_n}  \big\|_{L^2_{\tau,\tau_2,\tau_3}} 
\Big\|_{L^p(\O)} 
\\
& \quad \le
 \Big\|  
\big\| 
\jb{\tau_2}^{-b} \jb{\tau_3}^{-b}  
\| \Hb_3^{\bf N} (n,n_2,n_3,\tau,\tau_2,\tau_3) \|_{\l^2_{n_2, n_3, n}}  \big\|_{L^2_{\tau,\tau_2,\tau_3}} 
\Big\|_{L^p(\O)} \\
&\quad  \les 
p^\frac 12
\sum_{\substack{n,n_1,n_2,n_3 \in \Z^2\\|n|\sim N}} 
\ind_{A(n)}
\frac{N^s}{N_1 N_2^{s} N_3^{s}} 
\| \jb{\tau_2}^{-b} \jb{\tau_3}^{-b}   \ft \eta_{_T} (\tau + \tau_2 - \tau_3 - \phi (\bar n)) \|_{L^2_{\tau,\tau_2,\tau_3}} 
\\
& \quad \les  p^\frac 12 T^\frac 12  N_1^{6}
\end{split}
\label{RB8}
\end{align}

\noi
for small $s > 0$, 
where $A(n)$ is as in \eqref{RT12a}.
Hence, by interpolating with \eqref{RB8}, we see that~\eqref{RB5} follows once we prove
\begin{align}
\begin{split}
& \Big\|
\big\|  \jb{\tau}^{-b} \jb{\tau_2}^{-b} \jb{\tau_3}^{-b}
\| \Hb_3^{N_1}(n,n_2,n_3,\tau,\tau_2,\tau_3) \|_{\l^2_{n_2, n_3} \to \l^2_n}  \big\|_{L^2_{\tau,\tau_2,\tau_3}}
\Big\|_{L^p(\O)}\\
& \quad \les p^\frac 12    N_1^{-2\dl}, 
\end{split}
\label{RB9}
\end{align}

\noi
since the extra factor $N_1^{-\dl}$
allows us to sum over dyadic $N, N_2, N_3 \ge 1$.

Recalling $ b > \frac 12$, 
the desired bound \eqref{RB9} in this case 
follows from Minkowski's integral inequality and
Lemma~\ref{LEM:RT3a};
see \eqref{RT13}.

\medskip

\noi 
$\bullet$ {\bf Case 4:}
$N_1  \ll \max(N_2, N_3)$.
\\
\indent
In view of Case 1, we have $N_1 \gg \min (N_2, N_3)$.
We separately consider the following subcases:
\[ \text{(4.a)}
\  N\sim N_2 \gg N_1 \gg N_3
\qquad \text{and}
\qquad 
\text{(4.b)}
\  N\sim N_3 \gg N_1 \gg N_2.
\]

%\medskip

\noi
$\pmb\circ$ {\bf Subcase 4.a:}
 $N\sim N_2 \gg N_1 \gg N_3$.
\\
\indent
In this case, we have $|\phi(\bar n)| \sim N^2 \gg N_1^2$, 
which is already treated in Case 2.

\medskip

\noi
$\pmb\circ$ {\bf Subcase 4.b:}
 $N\sim N_3 \gg N_1 \gg N_2$.
\\
\indent
In view of Case 2, we may assume that 
$|\phi(\bar n)| \les  N_1^2$.
Since $\big||n_1|^2 - |n_2|^2\big| \les N_1^2$, 
we obtain
$\big||n|^2 - |n_3|^2\big| \les N_1^2$
in this case, 
and thus we can proceed as in Subcase 2.b of the proof 
of Proposition \ref{PROP:RT2}.

Let  $ \Hb_{3b}^{N_1}$ denote the contribution
from  this subcase
to $\Hb_{3}^{N_1}$ in \eqref{RB4}, namely,  
\begin{align}
\begin{split}
\Hb_{3b}^{N_1} (n,n_2,n_3,\tau,\tau_2,\tau_3)  
& = 
\frac{\jb{n}^s}{\jb{n_2}^s \jb{n_3}^s} 
\sum_{\substack{N, N_2, N_3 \ge 1, \text{ dyadic}\\
N\sim N_3 \gg N_1 \gg N_2\\
||n|^2 - |n_3|^2| \les N_1^2}} 
\ind_{|n|\sim N}
\bigg(\prod_{j = 2}^3\ind_{|n_j|\sim N_j}\bigg)
\\
& \quad \times  
\sum_{\substack{n + n_2 - n_3 = n_1\\n_1 \ne n, n_2}}
\frac{g_{n_1}  }{\jb{n_1}}  \ind_{|n_1|\sim N_1}
 \ft \eta_{_T}  (\tau + \tau_2 - \tau_3 - \phi (\bar n)).
\end{split}
\label{RB11}
\end{align}

\noi
Note that the condition $n_1 \ne n, n_2$ is vacuous since $N \gg N_1 \gg N_2$.
Since we have $|n| \sim |n_3|$ on the support of 
 $\Hb_{3b}^{N_1}$, we have 
\begin{align}
 \|   \Hb_{3b}^{N_1} (n,n_2,n_3,\tau,\tau_2,\tau_3)   \|_{\l^2_{n_2, n_3} \to \l^2_n} 
 \les
 \| \wt   \Hb_{3b}^{N_1} (n,n_2,n_3,\tau,\tau_2,\tau_3)   \|_{\l^2_{n_2, n_3} \to \l^2_n} 
\label{RB13a}
\end{align}

\noi
for each $\tau, \tau_2, \tau_3 \in \R$, 
where 
$ \wt  \Hb_{3b}^{N_1} (n,n_2,n_3,\tau,\tau_2,\tau_3)  $ is defined by 
\begin{align*}
\wt   \Hb_{3b}^{N_1} (n,n_2,n_3,\tau,\tau_2,\tau_3)  
= \frac{\jb{n_3}^s}{\jb{n}^s}
  \Hb_{3b}^{N_1} (n,n_2,n_3,\tau,\tau_2,\tau_3)  .
\end{align*}

\noi
Then, 
 by writing 
\begin{align*}
\wt   \Hb_{3b}^{N_1} (n,n_2,n_3,\tau,\tau_2,\tau_3)  
 =  \sum_{\substack{n_1 \in \Z^2\\ |n_1|\sim N_1}}
\wt  h^{\tau, \tau_2, \tau_3}_{n, n_1, n_2, n_3}
g_{n_1}, 
\end{align*}

\noi
we see that, 
for fixed $\tau$, $\tau_2$, and $\tau_3$, 
the tensor 
$\wt h^{\tau, \tau_2, \tau_3}_{n, n_1, n_2, n_3}$
depends only on $n - n_3$, $\big||n|^2 - |n_3|^2 \big|$, 
and $(n_1, n_2)$ 
with 
the restriction 
$|n|, |n_3| \gg N_1$
on the input and output frequencies, 
to which Lemma \ref{LEM:RT}\,(iii) can be applied.

By an interpolation argument
 as before with \eqref{RB13a},  
we claim that~\eqref{RB5} in this subcase follows
once we prove
\begin{align}
\begin{split}
& \Big\|\big\|  \jb{\tau}^{-b} \jb{\tau_2}^{-b} \jb{\tau_3}^{-b}
\| \wt  \Hb_{3b}^{N_1}(n,n_2,n_3,\tau,\tau_2,\tau_3) \|_{\l^2_{n_2, n_3} \to \l^2_n}  \big\|_{L^2_{\tau,\tau_2,\tau_3}}
\Big\|_{L^p(\O)}\\
& \quad \les p^\frac 12    N_1^{-\dl},  
\end{split}
\label{RB14}
\end{align}

\noi
where the power on $\jb{\tau}$ is replaced by $-b$.

As in \eqref{RTX1}, we define $ 
\wt  \Hb_{3b}^{N_1, m}$ by setting
\begin{align*}
\wt  \Hb_{3b}^{N_1, m}(n',n_2,n_3',\tau,\tau_2,\tau_3)
=  
\wt  \Hb_{3b}^{N_1}(n' + m ,n_2,n'_3+m,\tau,\tau_2,\tau_3)
\cdot   \ind_{\substack{|n'|\les  N_1 \\|n_3'|\les N_1}}.
\end{align*}

\noi
Then, 
proceeding as in \eqref{RA14a}
with Lemma \ref{LEM:DH}
(see also \eqref{XY6}), 
we see that there exists $J =O(N_1^{a_2})$ (with some $a_2 \gg1 $)
and $\{m_j \}_{j = 1}^J\subset \Z^2$
with $|m_j|\gg N_1$
such that 
\begin{align}
& \Big\|\big\|   \jb{\tau_2}^{-b} \jb{\tau_3}^{-b}
\| \wt  \Hb_{3b}^{N_1}(n,n_2,n_3,\tau,\tau_2,\tau_3) \|_{\l^2_{n_2n_3} \to \l^2_n}  \big\|_{L^2_{\tau,\tau_2,\tau_3}}
\Big\|_{L^p(\O)}
\notag \\
& \quad \les
N_1
 \sup_{|m| \les  N_1}
\Big\|\big\|  \jb{\tau_2}^{-b} \jb{\tau_3}^{-b}
\| \wt  \Hb_{3b}^{N_1, m}(n',n_2,n_3',\tau,\tau_2,\tau_3) \|_{\l^2_{n_2, n_3', n'}}  \big\|_{L^2_{\tau,\tau_2,\tau_3}}
\Big\|_{L^p(\O)}\notag \\
& \quad \quad 
+ 
N_1^\frac{a_2}{2}
\sup_{j  = 1, \dots, J}
\Big\|\big\|  \jb{\tau_2}^{-b} \jb{\tau_3}^{-b}
\|  \wt \Hb_{3b}^{N_1, m_j}(n',n_2,n_3',\tau,\tau_2,\tau_3) \|_{\l^2_{n_2, n_3', n'}}  \big\|_{L^2_{\tau,\tau_2,\tau_3}}
\Big\|_{L^p(\O)}\notag \\
& \quad \les
p ^\frac 12 T^\frac 12 N_1^{\frac{a_2}{2}+ 6}
\label{RB15}
\end{align}

\noi
for any $p \ge 2$, 
where the second step follows from 
(a slight modification of) \eqref{RB8}, 
Then,~\eqref{RB5} in this subcase follows from interpolating \eqref{RB14} and \eqref{RB15}. 

It remains to prove \eqref{RB14}.
Since $b > \frac 12$, we can proceed as in 
\eqref{RT13}
by applying 
Lemma~\ref{LEM:RT}\,(iii)
(instead of 
Lemma~\ref{LEM:RT}\,(i)
used in 
the proof of Lemma \ref{LEM:RT3a} presented at the end of this section).
This proves \eqref{RB14}
(with $\dl = s- \eps$).

\medskip

This concludes the proof of Proposition \ref{PROP:RT3}.
\end{proof}

We conclude this section by presenting a proof of 
 Lemma \ref{LEM:RT3a}.

\begin{proof}[Proof of Lemma \ref{LEM:RT3a}]

Given dyadic $N, N_1, N_2, N_3 \ge 1$, write $\Hb_3^{\bf N}$
in \eqref{RB6} (see also \eqref{RB4})
as 
\begin{align}
\Hb_3^{\bf N}(n,n_2,n_3,\tau,\tau_2,\tau_3)
= \sum_{m \in \Z} \Hb^{{\bf N}, (m)}_3 (n, n_2, n_3 )\ft \eta_{_T} (\tau + \tau_2 - \tau_3 - m),   
\label{RTB1}
\end{align}

\noi
where $\Hb^{{\bf N}, (m)}_3$ is defined by 
\begin{align}
\begin{split}
 \Hb^{{\bf N}, (m)}_3 (n, n_2, n_3)
& =  \frac{\jb{n}^{s}}{ \jb{n_2}^{s}\jb{n_3}^{s}} 
\ind_{\substack{|n| \sim N\\|n_2| \sim N_2\\|n_3| \sim N_3}}
\sum_{\substack{n + n_2 - n_3= n_1 \\ n_1 \neq n,n_2}} 
\frac{g_{n_1} }{\jb{n_1}}
\ind_{|n_1|\sim N_1}
\ind_{ \phi (\bar n) = m}\\
& =  \frac{\jb{n}^{s}}{ \jb{n_2}^{s}\jb{n_3}^s} 
\sum_{n_1 \in \Z^2} 
h^{{\bf N}, (m)}(n, n_1, n_2, n_3)
\frac{g_{n_1}}{\jb{n_1}}.
\end{split}
\label{RTB2}
\end{align}

\noi
Here,  $h^{{\bf N}, (m)}$ is the base tensor defined in \eqref{baseT}. 
Hence, from \eqref{RTB1}, 
$\ft \eta_{_T}(\tau) = T \ft \eta (T\tau)$, 
Lemma~\ref{LEM:RT}\,(i) with \eqref{RTB2}, and Lemma \ref{LEM:basetensor}, we have
\[
\begin{split} 
& \sup_{\tau,\tau_2, \tau_3\in \R} \Big\|  \| 
 \ind_{Q_0} (n) \ind_{Q_2} (n_2)
 \ind_{Q_3} (n_3)
\Hb_3^{\bf N}(n,n_2,n_3,\tau,\tau_2,\tau_3) 
 \|_{\l^2_{n_2n_3} \to \l^2_n} \Big\|_{L^p(\O)}  \\
 & \quad \le \sup_{\tau,\tau_2, \tau_3\in \R} \sum_{m \in \Z}
|\ft \eta_{_T} (\tau + \tau_2 - \tau_3- m)| \\
& \hphantom{XXXXX}
\times \Big\| \| 
 \ind_{Q_0} (n) \ind_{Q_2} (n_2)
 \ind_{Q_3} (n_3)
 \Hb_3^{{\bf N}, (m)}(n,n_2,n_3)
\|_{\l^2_{n_2n_3} \to \l^2_n} \Big\|_{L^p(\O)}  \\
&\quad \les p  N_1^\eps N^s N_1^{-1}  N_2^{-s} N_3^{-s}
\sup_{m \in \Z}
\max\Big(  \|  h^{{\bf N}, (m)}  \|_{n_1n_2n_3   \to n },
 \|  h^{{\bf N}, (m)}  \|_{n_2n_3   \to n n_1}\Big)
  \\
%& \les p 
%N_{\max}^\eps N^s N_1^{-1} N_3^{-1} N_2^{-s}   (N_1 N_3)^{\frac12 + \eps} \\
& \quad \les p 
 N_1^{-s+ 3\eps}, 
\end{split}
\]

\noi
where, in the last step,  we used the fact that $N_1 \ges N_{\med}$.
This proves  \eqref{RB10}.
\end{proof}

\section{Random tensor term IV}
\label{SEC:RT4}

In this section, we briefly discuss how to treat 
the  random tensor term:
\begin{align*}
\TT_4(w_1, w_3) =  \NN (w_1, z,w_3), 
%\label{RC0}
\end{align*}

\noi
where $z$ denotes the random linear solution in \eqref{lin1}.

\begin{proposition}
\label{PROP:RT4}

Given small   $s > 0$ and $\eps > 0$, 
 there exists $\ta > 0$
such that 
\begin{align*}
\Big\| \| \TT_4 \|_{X^{s,b}_T\times X^{s,b}_T  \to 
X^{s,-b'}_T}  \Big\|_{L^p (\O)} 
\les p^\frac 12   T^\ta
%\label{RC0a}
\end{align*}

\noi
for any finite $p \ge 1$ and $0 < T\ll 1$, where 
$b = \frac 12 + \eps$ and $b' = \frac 12 - 2\eps$
are as in \eqref{b1}.
In particular, 
 the conclusion of Proposition \ref{PROP:1}
holds
for the random tensor term $\NN(w, z,  w)$.

\end{proposition}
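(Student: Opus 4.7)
The plan is to follow the template of Propositions \ref{PROP:RT2} and \ref{PROP:RT3}, since $\NN(w_1, z, w_3)$ contains exactly one Gaussian factor. First I would perform the dyadic decomposition
\begin{align*}
\TT_4(w_1, w_3) = \sum_{\substack{N_2 \ge 1 \\ \text{dyadic}}} \TT_4^{N_2}(w_1, w_3), \qquad \TT_4^{N_2}(w_1, w_3) = \NN(w_1, \Q_{N_2} z, w_3),
\end{align*}
and reduce the claim to proving, for some small $\ta, \dl > 0$,
\begin{align*}
\Big\| \|\TT_4^{N_2}\|_{X^{s,b}_T \times X^{s,b}_T \to X^{s,-b'}_T} \Big\|_{L^p(\O)} \les p^{1/2}\, T^\ta N_2^{-\dl}.
\end{align*}
Two easy regimes can be handled essentially as in Cases 1 and 2 of the proof of Proposition \ref{PROP:RT3}. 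When $N_2 \les \min(N_1, N_3)$, I would apply the improved trilinear estimate \eqref{tri3} with $j_* \in \{1, 3\}$ taken as the index realizing $\max(N_1, N_3)$, placing $\Q_{N_2} z$ in the low-regularity slot; combined with Lemma \ref{LEM:decay1}, \eqref{lin2}, and the Wiener chaos estimate (Lemma \ref{LEM:hyp}), this yields a gain $N_2^{-\eps_2}$. When $|\phi(\bar n)| \gg N_2^2$, the identity $\tau = \tau_1 - \tau_2 + \tau_3$ and the triangle inequality force $\max(\jb{\s}, \jb{\s_1}, \jb{\s_2}, \jb{\s_3}) \gg N_2^2$, and \eqref{tri3} combined with Lemma \ref{LEM:decay1} produces the required factor $N_2^{-2\eps_1}$.

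For the remaining genuine random-tensor regime, I would introduce the tensor
\begin{align*}
\Hb_4^{N_2}(n, n_1, n_3, \tau, \tau_1, \tau_3) = \frac{\jb{n}^s}{\jb{n_1}^s \jb{n_3}^s} \sum_{\substack{n + n_2 = n_1 + n_3 \\ n \ne n_1, n_3}} \frac{\cj{g_{n_2}}}{\jb{n_2}} \ind_{|n_2| \sim N_2}\, \ft\eta_{_T}(\tau - \tau_1 - \tau_3 - \phi(\bar n)),
\end{align*}
and, mimicking \eqref{RT5}--\eqref{RT6}, reduce matters to
\begin{align*}
\Big\| \big\| \jb{\tau}^{-b'} \jb{\tau_1}^{-b} \jb{\tau_3}^{-b} \|\Hb_4^{N_2}\|_{\l^2_{n_1, n_3} \to \l^2_n} \big\|_{L^2_{\tau, \tau_1, \tau_3}} \Big\|_{L^p(\O)} \les p^{1/2}\, T^\ta N_2^{-\dl}.
\end{align*}
Splitting further by dyadic sizes $N, N_1, N_3$, I would treat the subregime $N_2 \ges \max(N_1, N_3)$ by direct application of Lemma \ref{LEM:RT}(i) to the base tensor $h^{{\bf N}, (m)}$ of \eqref{baseT} together with Lemma \ref{LEM:basetensor}, combined with an $\l^1$-type interpolation as in \eqref{RB8} to pass from the $\jb{\tau}^{-b}$ weight to $\jb{\tau}^{-b'}$. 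The final subregime is $N_2 \ll \max(N_1, N_3)$ under $|\phi(\bar n)| \les N_2^2$: in view of Case 1 we may assume $N_2 \gg \min(N_1, N_3)$, and by the symmetry between $n_1$ and $n_3$ it suffices to treat $N \sim N_1 \gg N_2 \gg N_3$, in which the constraint $|\phi(\bar n)| \les N_2^2$ together with $\big||n_2|^2 - |n_3|^2\big| \les N_2^2$ forces $\big||n|^2 - |n_1|^2\big| \les N_2^2$.

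The hard part will be this last subcase, which requires Lemma \ref{LEM:RT}(iii). The key verification is that, under the $\delta$-constraint $n + n_2 = n_1 + n_3$ and the phase constraint $\phi(\bar n) = m$, the normalized tensor depends only on $n - n_1$, $|n|^2 - |n_1|^2$, and $(n_2, n_3)$, with the input/output restriction $|n|, |n_1| \gg N_2$ in force. Once this is verified, Lemma \ref{LEM:DH} discretizes the supremum over translations $m \in \Z^2$ into $O(N_2^{a_2})$ values, and interpolation with a crude Schur-type $\l^1$-bound as in \eqref{RB15} delivers the desired dyadic decay, exactly as in Subcase 2.b of Proposition \ref{PROP:RT2} and Subcase 4.b of Proposition \ref{PROP:RT3}. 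The final claim of Proposition \ref{PROP:1} for $\NN(w, z, w)$ then follows from summing over $N_2$ and an application of Chebyshev's inequality.
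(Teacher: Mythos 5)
Your proposal is correct and follows essentially the same route as the paper: the same dyadic decomposition, the same easy cases (low Gaussian frequency handled by \eqref{tri3}, large $|\phi(\bar n)|$ handled by the modulation gain), the same reduction to an operator-norm bound for $\Hb_4^{N_2}$ split by $N_2 \gtrless \max(N_1, N_3)$, with Lemma~\ref{LEM:RT}\,(i)\,/\,Lemma~\ref{LEM:basetensor} and interpolation in the first regime and Lemma~\ref{LEM:RT}\,(iii) after the reduction $\big||n|^2-|n_1|^2\big|\les N_2^2$ in the second. The only cosmetic difference is that the paper records the first-regime estimate as a separate auxiliary lemma (Lemma~\ref{LEM:RT4a}) rather than invoking the base-tensor bounds inline.
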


\begin{proof}

Given dyadic $N_2 \ge 1$, define
\begin{align*}
\TT_4^{N_2} (w_1, w_3) =  \NN(w_1, \Q_{N_2} z, w_3).
%\label{RC1}
\end{align*}

\noi
Then, it suffices to prove that 
there exist 
 small $\ta, \dl > 0$ such that 
\begin{align}
\Big\| \| \TT_4^{N_2} 
\|_{X^{s,b}_T\times X^{s,b}_T  \to X^{s,-b'}_T} 
\Big\|_{L^p (\O)} 
\les p^\frac 12   T^\ta N_2^{-\dl} 
\label{RC2}
\end{align}

\noi
for any finite $p \ge 1$ and $0 < T\ll 1$.

\medskip

\noi 
$\bullet$ {\bf Case 1:}
$N_2  \les \min(|n_1|, |n_3|)$ or 
$|\phi(\bar n)| \gg  N_2^2$.
\\
\indent
A slight modification of the argument in Case 1 
or Case 2 of the proof of Proposition \ref{PROP:RT3}
yields \eqref{RC2}.

\medskip

As seen in the proof of Proposition \ref{PROP:RT3}, 
 \eqref{RC2}
follows once we prove 
\begin{align}
\begin{split}
& \Big\|
\big\|  \jb{\tau}^{-b'} \jb{\tau_1}^{-b} \jb{\tau_3}^{-b}
\| \Hb_4^{N_2}(n,n_1,n_3,\tau,\tau_1,\tau_3) \|_{\l^2_{n_1, n_3} \to \l^2_n}  \big\|_{L^2_{\tau,\tau_1,\tau_3}}
\Big\|_{L^p(\O)}\\
& \quad \les p^\frac 12   T^\ta N_2^{-\dl},  
\end{split}
\label{RC5}
\end{align}

\noi 
where 
$\Hb_4^{N_2}$ is given by 
\begin{align*}
%\begin{split}
\Hb_4^{N_2} (n,n_1,n_3,\tau,\tau_1,\tau_3)  
& = 
\frac{\jb{n}^s}{\jb{n_1}^s \jb{n_3}^s} \\
& \quad \times  \sum_{\substack{n - n_1 - n_3 = n_2\\
n_2 \ne n_1, n_3}}
\frac{\cj{g_{n_2}}  }{\jb{n_2}}  \ind_{|n_2|\sim N_2}
 \ft \eta_{_T}  (\tau - \tau_1 - \tau_3 - \phi (\bar n)).
%\end{split}
%\label{RC4}
\end{align*}

\noi
Given dyadic $N, N_1, N_2, N_3 \ge 1$, 
we set 
\begin{align*}
%\begin{split}
\Hb_4^{\bf N}(n,n_1,n_3,\tau,\tau_1,\tau_3)
& = \ind_{|n|\sim N}
\bigg(\prod_{j \in\{1, 3\}}
\ind_{|n_j|\sim N_j}
\bigg)\Hb_4^{N_2}(n,n_1,n_3,\tau,\tau_1,\tau_3).
%\end{split}
%\label{RC6}
\end{align*}

\noi
Then, by writing 
\begin{align}
\Hb_4^{N_2}
= 
\sum_{\substack{N, N_1, N_3 \ge 1, \text{ dyadic}\\ N_2 \ges \max (N_1, N_3)}} 
\Hb_4^{\bf N}
+ 
\sum_{\substack{N, N_1, N_3 \ge 1, \text{ dyadic}\\ N_2 \ll \max (N_1, N_3)}} 
\Hb_4^{\bf N}, 
\label{RC7}
\end{align}

\noi
 we separately estimate
the first and second terms on the right-hand side of~\eqref{RC7}
(after removing the contribution from Case 1).

\medskip

\noi 
$\bullet$ {\bf Case 2:}
$N_2\ges \max(N_1, N_3)$.
\\
\indent
We can conclude \eqref{RC5}
by 
an interpolation argument 
as in 
 Case 3 of the proof of Proposition~\ref{PROP:RT3},
 once we have  following lemma.

\begin{lemma}\label{LEM:RT4a}
Given dyadic $N, N_1, N_2, N_3 \ge 1$, 
suppose that $N_2 \ges N_{\med}$, where $N_{\med}$ is as in~\eqref{ord1}.
Let $Q_0$, $Q_1$, and $Q_3$
be cubes of side length $\les N_2$.
Fix small $s > 0$.
Then, we have 
\begin{align*}
&  \sup_{\tau,\tau_1,\tau_3\in \R} 
 \Big\| \| 
\ind_{Q_0} (n) \ind_{Q_1} (n_1) \ind_{Q_3} (n_3) \\
&  \hphantom{XXXXX}
\times
\Hb_4^{\bf N}(n,n_1,n_3,\tau,\tau_1,\tau_3) 
 \|_{\l^2_{n_1, n_3} \to \l^2_n} \Big\|_{L^p(\O)}  \les 
p^\frac 12  N_2^{- s + \eps}
\end{align*}

\noi 
for any finite $p \ge 1$
and  dyadic $N, N_1,  N_2, N_3  \ge 1$, 
uniformly in $0 < T \ll 1$
and the cubes $Q_0$, $Q_1$, and $Q_3$.
\end{lemma}

Since Lemma \ref{LEM:RT4a}
follows
from a minor modification of the proof of Lemma \ref{LEM:RT3a}, 
we omit details.

\medskip

\noi 
$\bullet$ {\bf Case 3:}
$N_2  \ll \max(N_1, N_3)$.
\\
\indent
By assuming $N_1 \ge N_3$ without loss of generality, 
we have $N\sim N_1 \gg N_2 \gg N_3$.
In view of Case 1, we may assume that 
$|\phi(\bar n)| \les  N_2^2$.
Since $\big||n_2|^2 - |n_3|^2\big| \les N_2^2$, 
we obtain
$\big||n|^2 - |n_1|^2\big| \les N_2^2$
in this case, 
and thus we can proceed as in Subcase 4.b of the proof 
of Proposition \ref{PROP:RT3}, 
using an interpolation argument
and 
Lemma~\ref{LEM:RT}\,(iii)
(instead of 
Lemma~\ref{LEM:RT}\,(i)
which is needed for a proof of 
Lemma \ref{LEM:RT4a}).
\end{proof}

\appendix

\section{Auxiliary lemma}

In this appendix, 
we prove an auxiliary lemma used in the proof of Lemma \ref{LEM:DH}.

\begin{lemma}\label{LEM:Y1}

Given $r \in \N$, fix
 $y \in \R^r$.
Given $N \gg1$, let $\{\al_i \}_{i=1}^r \subset \R^r$ be a collection of linearly independent  vectors,
 satisfying 
$ |\al_i| \les  N$, $i = 1, \dots, r$, 
such that 
\begin{align}
\textup{Vol}_r
\big(\Dl(\{\al_i\}_{i = 1}^r)\big) \ge \frac1{r!}, 
\label{aux0}
\end{align}

\noi
where $\Dl(\{\al_i\}_{i = 1}^r)$ denotes the 
$r$-dimensional simplex formed by 
the vectors $\al_1, \dots, \al_r$, 
and $\textup{Vol}_r$ denotes the $r$-dimensional volume.
Suppose that
\begin{align}
|y \cdot \al_i| \les N^{a_1}, \quad i = 1, \dots, r
\label{aux1}
\end{align}

\noi
for some $a_1 \ge 1$.
Then, there exists $C_r \gg 1$, independent of $N\gg1 $,
$\{\al_i \}_{i=1}^r \subset \R^r$,  and $a_1\ge 1$,  such that $|y| \les N^{C_r a_1}$.
\end{lemma}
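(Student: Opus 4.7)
The plan is to reduce the estimate on $|y|$ to Cramer's rule applied to a linear system whose coefficient matrix is well-conditioned because of \eqref{aux0}. First, I form the $r \times r$ matrix $A$ whose $i$-th row is $\al_i^T$, so that \eqref{aux1} can be rewritten as $\|Ay\|_\infty \les N^{a_1}$. The volume condition \eqref{aux0} is the statement
\[
|\det A| = r! \cdot \textup{Vol}_r\big(\Dl(\{\al_i\}_{i = 1}^r)\big) \geq 1,
\]
which in particular implies that $A$ is invertible, so we may solve $y = A^{-1} b$ with $b = Ay$.

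Next, I apply Cramer's rule: for each $j = 1, \dots, r$, we have $y_j = \det A^{(j)} / \det A$, where $A^{(j)}$ denotes the matrix obtained from $A$ by replacing its $j$-th column with $b$. To bound the numerator, I invoke Hadamard's inequality, which gives
\[
|\det A^{(j)}| \leq \prod_{k = 1}^r \|A^{(j)}_{\cdot, k}\|_{\l^2},
\]
where $A^{(j)}_{\cdot, k}$ denotes the $k$-th column of $A^{(j)}$. The $r - 1$ unreplaced columns of $A^{(j)}$ have entries bounded in absolute value by a constant times $N$ (from $|\al_i| \les N$), so their $\l^2$-norms are $\les r^{1/2} N$. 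The replaced column is $b$, whose entries satisfy $|b_i| \les N^{a_1}$ by \eqref{aux1}, so its $\l^2$-norm is $\les r^{1/2} N^{a_1}$. Combining, $|\det A^{(j)}| \les r^{r/2} N^{a_1} N^{r-1}$.

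Putting this together with $|\det A| \geq 1$ gives $|y_j| \les_r N^{a_1 + r - 1}$ for each $j$, hence $|y| \les_r N^{a_1 + r - 1}$. Since $a_1 \geq 1$, we have $r - 1 \leq (r-1) a_1$, so the exponent satisfies $a_1 + r - 1 \leq r \cdot a_1$. Therefore $|y| \les N^{r a_1}$, establishing the claim with $C_r = r$ (or any slightly larger constant absorbing the $r$-dependent factor $r^{r/2}$ in the implicit constant). There is no real obstacle: the argument is pure bookkeeping with Hadamard's inequality and Cramer's rule, together with the observation that the hypothesis $a_1 \geq 1$ lets us absorb the additive term $r-1$ into a multiplicative exponent.
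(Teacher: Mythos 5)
Your proof is correct, and it takes a genuinely different and substantially simpler route than the paper's. The paper argues by induction on $r$: it decomposes $y = y^{\parallel} + y^{\perp}$ relative to the hyperplane $\mathcal{P} = \mathrm{span}\{\al_i\}_{i=1}^{r}$, bounds $y^{\parallel}$ by the inductive hypothesis after a rescaling, and then bounds $y^{\perp}$ by a contradiction argument using a lower bound on $\mathrm{dist}(\al_{r+1}, \mathcal{P})$. By contrast, you observe that \eqref{aux0} is precisely the statement $|\det A| \geq 1$ for the matrix $A$ with rows $\al_i^{T}$, write $y_j = \det A^{(j)} / \det A$ by Cramer's rule, and bound the numerator by Hadamard's inequality using the column bounds $\les N$ (for the $r-1$ unreplaced columns) and $\les N^{a_1}$ (for the replaced column $b = Ay$). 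This is a one-step linear-algebra argument with no induction and no contradiction, and it yields the explicit, linear-in-$r$ exponent $C_r = r$ (after absorbing the $r$-dependent prefactor, valid since $a_1 \geq 1$ and $N \gg 1$), whereas the recursion $C_{r+1} \gtrsim 2 C_r (1 + 1/r)$ in the paper produces an exponent growing at least exponentially in $r$. The only point worth making explicit, which you implicitly use, is that the $k$-th column of $A$ has $\ell^2$-norm $\les r^{1/2} N$ because each entry $\al_i^{(k)}$ is bounded by $|\al_i| \les N$; this is fine. Your argument is a clean improvement on the paper's.
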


\begin{proof}
We argue by induction.
When $r = 1$, we have  $\al_1 \in \R\setminus\{0\}$.
Then, 
given $y \in\R$ satisfying \eqref{aux1}, 
the conclusion follows from $|y \al_1| \les N^{a_1}$ and $|\al_1| \ge 1$.

Now, suppose that the conclusion holds for some $r \ge 1$.
%We turn to the case $r +1$.
 Let $\{\al_i \}_{i=1}^{r+1} \subset \R^{r+1}$ be a collection of linearly independent  vectors, 
  satisfying 
$ |\al_i| \le c_0  N$, $i = 1, \dots, r+1$, 
for some $c_0 > 0$
such that 
\begin{align}
\textup{Vol}_{r+1}
\big(\Dl(\{\al_i\}_{i = 1}^{r+1})\big) \ge \frac 1{(r+1)!}, 
\label{aux0a}
\end{align}

\noi
Define the hyperplane $\mathcal{P}$ by 
\[
\mathcal P = {\rm span} \{\al_i \}_{i=1}^r \subset \R^{r+1}.
\]

\noi
Given  $y \in \R^{r+1}$ satisfying
\begin{align}
|y \cdot \al_i| \le c_1 N^{a_1}, \quad i = 1, \dots, r+1, 
\label{aux2}
\end{align}

\noi
for some $c_1 > 0$, 
write it as 
\begin{align*}
y = y^\parallel + y^\perp,
%\label{YY1}
\end{align*}

\noi
where $y^\parallel \in \mathcal P$ and $ y^\perp \perp \mathcal P$.
From \eqref{aux2}, we have
\begin{align}
|y^\parallel \cdot \al_i | = |y \cdot \al_i | \le c_1 N^{a_1}
\label{aux0b}
\end{align}

\noi
for $i = 1, \dots, r$.
Now, 
 recall that the volume of a $(r+1)$-dimensional simplex $A$ is given by 
\begin{align}\label{volume}
\textup{Vol}_{r+1}(A)
= 
 \frac1{r+1} \cdot (\textup{base volume}) \cdot (\textup{height}).
\end{align}

\noi
Then, 
from \eqref{volume}, \eqref{aux0a},  and $|\al_{r+1}|\le c_0 N$, we see that 
\begin{align*}
\textup{Vol}_r
\big(\Dl(\{\al_i\}_{i = 1}^r)\big) \ge 
\frac{r+1}{|\al_{r+1}|}
\textup{Vol}_{r+1}
\big(\Dl(\{\al_i\}_{i = 1}^{r+1})\big) \ge (c_0N)^{-1}\cdot \frac 1 {r!}.
\end{align*}

\noi
By setting $\be_i = c_0^\frac 1r N^{\frac 1r} \al_i$, $i = 1, \dots, r$,
we see that $\be_i$'s are linearly independent,  
$|\be_i| \le c_0^{1 + \frac 1r} N^{1 + \frac 1r} =: c_0M$
for any 
$i = 1, \dots, r$,   and 
$\textup{Vol}_r
\big(\Dl(\{\be_i\}_{i = 1}^r)\big) \ge \frac1{r!}$.
Moreover, from~\eqref{aux0b} with $a_1 \ge 1$, we have 
\begin{align*}
|y^\parallel \cdot \be_i| \le c_1 c_0^\frac 1 r  N^\frac 1r N^{a_1}
\le c_1 M^{a_1}
\end{align*}

\noi
for any $i = 1, \dots, r$.
Hence, by the inductive hypothesis, we conclude that 
\begin{align}
|y^\parallel| \les M^{C_r a_1} \sim N^{C_r(1+ \frac 1r) a_1}
\label{YY2}
\end{align} 

\noi
for some $C_r \gg 1$.

It remains  to show that $|y^\perp| \les N^{C_r' a_1}$ for some $C_r' > 0$.
We prove this by contradiction.
Suppose that for any $k \in \N$, 
there exists $N = N(k) \gg1$ and $y_k\in \R^{r+1}$, satisfying
\eqref{aux1},  such that
\begin{align}
|y_k^\perp| \gg N^{ka_1},
\label{YY3} 
\end{align}

\noi
 where $y_k^\perp$ is the component of $y_k$ orthogonal to the hyperplane $\mathcal P$.
%%Suppose that since, otherwise, it follows from \eqref{aux2} that 
%%$|y_k'| \cdot |\al_{r+1}|
%%= 
%%|y_k' \cdot \al_{r+1}|  = |y_k \cdot \al_{r+1}|  \les N^{a_1}$, which implies $|y_k'| \les N^{a_1}$.
%%
%%
%%
%%
%%In the following, we assume that $y_k'$ and $\al_{r+1}$ are not colinear, 

Recall that  $\al_{r+1}$ is not on the hyperplane $\mathcal P$.
Write $\al_{r+1} = \al_{r+1}^\parallel + \al_{r+1}^\perp$, 
where 
$\al_{r+1}^\parallel \in \mathcal P$ and $\al_{r+1}^\perp\perp \mathcal P$.
Then, 
 we claim that
\begin{align}\label{dist}
|\al_{r+1}^\perp| = {\rm dist} (\al_{r+1}, \mathcal P) \ges N^{-r}.
\end{align}

\noi
Since $ |\al_i| \les  N$, $i = 1, \dots, r$, 
we have 
\begin{align}
\textup{Vol}_r\big(\Dl(\{\al_i\}_{i = 1}^r)\big) \les N^r.
\label{YY4}
\end{align}

\noi
Then, \eqref{dist} follows from \eqref{volume}
and \eqref{YY4}.

From \eqref{YY3} and \eqref{dist}, we have 
\begin{align}
|y_k^\perp \cdot \al_{r+1}| = |y_k^\perp| |\al^\perp_{r+1}| 
 \ges N^{k  a_1} N^{-r} \gg N^{\frac 12 k a_1}
\label{YY10}
\end{align}

\noi
for any $N \gg1$, provided that $k > 2r$.
Hence, 
from \eqref{YY2} and  \eqref{YY10} with $|\al_{r+1}| \les N$, 
we see that there exist $c_2 \gg c_3 > 0$ such that 
\[
|y_k \cdot \al_{r+1}| \ge |y_k^\perp \cdot \al_{r+1}| - |y_k^\parallel \cdot \al_{r+1}| 
\ge c_2 N^{\frac 12 k a_1} - c_3 N^{C_r (1+ \frac 1r) a_1 + 1}
\gg N^{a_1} 
\]

\noi
for any $N \gg 1$, provided that 
$k > 2  C_r (1+ \frac 1r) +2$, 
which is a contradiction to \eqref{aux2}.
Therefore, 
there exists  $C_{r+1}' > 0$ such that, 
given any $y \in \R^{r+1}$
satisfying~\eqref{aux2}, 
we have 
 $|y^\perp| \les N^{C_{r+1}' a_1}$.
 Together with \eqref{YY2}, we conclude that  
there exists $C_{r+1} > 0$ such that 
 $|y|\les N^{C_{r+1}a_1}$ for 
 any $y \in \R^{r+1}$
satisfying~\eqref{aux2}.
We note that, from the discussion above, 
it suffices to choose $C_{r+1}' > \max(2r, 
2C_r (1+ \frac 1r) +2)$, 
which shows that the choice of $C'_{r+1}$ (and hence of $C_{r+1}$)
is independent of $N \gg1$, 
$\{\al_i \}_{i=1}^{r+1}$,  and $a_1\ge 1$.

Finally, by induction, we 
 concludes the proof of Lemma \ref{LEM:Y1}.
\end{proof}

\begin{remark}\label{REM:Y2}\rm

Let  $\{\al_i \}_{i=1}^r \subset \Z^r$ be a collection of linearly independent  {\it integer} vectors,
 satisfying 
$ |\al_i| \les  N$, $i = 1, \dots, r$.
Then, 
we have 
\begin{align*}
\textup{Vol}_{r}
\big(\Dl(\{\al_i\}_{i = 1}^{r})\big) 
= \frac1{r!} |{\rm det} (\al_1,\al_2,\cdots, \al_{r})|\ge \frac1{r!}, 
\end{align*}

\noi
and thus the hypothesis \eqref{aux0} is automatically satisfied.

\end{remark}

\section{Declarations}

\noi
{\bf Funding.}
T.O.~was supported by the European Research Council (grant no. 864138 ``SingStochDispDyn")
and
 by the EPSRC 
Mathematical Sciences
Small Grant  (grant no.~EP/Y033507/1).
Y.W.~was supported by 
 the EPSRC New Investigator Award 
 (grant no.~EP/V003178/1).

%T.O.~and~G.Z.~were supported by the European Research Council
%(grant no. 864138 ``SingStochDispDyn'').
%T.O.~was also supported 
% by the EPSRC 
%Mathematical Sciences
%Small Grant  (grant no.~EP/Y033507/1).
%Y.W. was supported by  the EPSRC New Investigator
%Award (grant no. EP/V003178/1).

\medskip

\noi
{\bf Competing interests.}
The authors have no competing interests to declare that are relevant to the content of this article.

\medskip

\noi
{\bf Data availability statement.}
This manuscript has no associated data.

%The above should be summarized in a statement and placed in a ?Declarations? section before the reference list under a heading of ?Funding? and/or ?Competing interests?. Other declarations include Ethics approval, Consent, Data, Material and/or Code availability and Authors? contribution statements.

\begin{ackno}\rm
T.O. would like to thank Prof.~Yoshio Tsutsumi 
for his continuous support over the last twenty years
since the first time they met at MSRI in 2005.
The authors would like to thank Shao Liu for his careful
reading of the paper.
% The authors would like to express their gratitude to the anonymous referees for the helpful comments which improved the quality of the paper

\end{ackno}

%\begin{ackno}\rm
%T.O. would like to thank Prof.~Yoshio Tsutsumi 
%for his continuous support over the last twenty years
%since the first time they met at MSRI in 2005.
%T.O.~was supported by the European Research Council (grant no. 864138 ``SingStochDispDyn")
%and
% by the EPSRC 
%Mathematical Sciences
%Small Grant  (grant no.~EP/Y033507/1).
%Y.W.~was supported by 
% the EPSRC New Investigator Award 
% (grant no.~EP/V003178/1).
%\end{ackno}
%
%

\end{document}